\documentclass[11pt]{amsart}
\usepackage{amsfonts}
\usepackage{mathrsfs}
\usepackage{bbm}
\usepackage{amssymb}
\usepackage{indentfirst}
\usepackage{latexsym,amsfonts,amssymb,amsmath}

\setcounter{page}{1} \setlength{\textwidth}{14.6cm}
\setlength{\textheight}{22.5cm} \setlength{\evensidemargin}{0.8cm}
\setlength{\oddsidemargin}{0.8cm} \setlength{\topmargin}{0.8cm}

\newtheorem{theorem}{Theorem}[section]
\newtheorem{lemma}[theorem]{Lemma}
\newtheorem{corollary}[theorem]{Corollary}
\newtheorem{question}[theorem]{Question}
\newtheorem{problem}[theorem]{Problem}
\theoremstyle{definition}
\newtheorem{definition}[theorem]{Definition}
\newtheorem{proposition}[theorem]{Proposition}
\newtheorem{example}[theorem]{Example}
\theoremstyle{remark}
\newtheorem{remark}[theorem]{Remark}
\newcommand{\cont}{\mathfrak{c}}
\newcommand{\supp}{\mathrm{supp}}
\DeclareMathSymbol{\res}{\mathord}{AMSa}{"16}
\def\hull#1{\langle#1\rangle}

\begin{document}

\title{Simply $sm$-factorizable (para)topological groups and their completions}

\author{Li-Hong Xie*}
\address{(L.H. Xie) School of Mathematics and Computational Science, Wuyi University,
Jiangmen 529020, P.R. China} \email{yunli198282@126.com; xielihong2011@aliyun.com}
\thanks{*The research is supported by NSFC (Nos. 11601393; 11861018)}

\author{Mikhail Tkachenko**}
\address{(M. Tkachenko) Departamento de Matem\'aticas,
Universidad Aut\'onoma Metropolitana,
 Av. San Rafael Atlixco 186,
Col. Vicentina, Iztapalapa, C.P. 09340, M\'exico City, Mexico}
\email{mich@xanum.uam.mx}
\thanks{** Corresponding author}
\subjclass[2010]{ 22A05, 22A30,54H11, 54A25, 54C30}
\keywords{Simply $sm$-factorizable, realcompactification; Dieudonn\'{e} completion;
    Lindel\"{o}f $\Sigma$-space;  $\mathbb{R}$-factorizable group}
\date{February 9, 2020}

\begin{abstract}
Let us call a (para)topological group \emph{strongly submetrizable} if it admits a coarser
separable metrizable (para)topological group topology. We present a characterization of simply
$sm$-factorizable (para)topo\-logical groups by means of continuous real-valued functions. We show
that a (para)topo\-logical group $G$ is a simply $sm$-factorizable if and only if for each continuous
function $f\colon G\to \mathbb{R}$, one can find a continuous homomorphism $\varphi$ of $G$ onto a
strongly submetrizable (para)topological group $H$ and a continuous function $g\colon H\to \mathbb{R}$
such that $f=g\circ\varphi$. This characterization is applied for the study of completions of simply $sm$-factorizable topological groups. We prove that the equalities $\mu{G}=\varrho_\omega{G}=\upsilon{G}$ hold
for each Hausdorff simply $sm$-factorizable topological group $G$. This result gives a positive answer to a question posed by Arhangel'skii and Tkachenko in 2018. Also, we consider realcompactifications of simply $sm$-factorizable paratopological groups. It is proved, among other results, that the realcompactification,
$\upsilon{G}$, and the Dieudonn\'e completion, $\mu{G}$, of a regular simply $sm$-factorizable paratopological group $G$ coincide and that $\upsilon{G}$ admits the natural structure of paratopological group containing $G$ as a dense subgroup and, furthermore, $\upsilon{G}$ is also simply $sm$-factorizable. Some results in [\emph{Completions of paratopological groups, Monatsh. Math. \textbf{183} (2017), 699--721}] are improved or generalized.
\end{abstract}

\maketitle

\section{Introduction}

A \emph{paratopological group} $G$ is a group $G$ with a topology such that the
multiplication mapping of $G \times G$ to $G$ associating $xy$ to arbitrary $x, y\in G$
is jointly continuous. A paratopological group $G$ is called a \emph{topological group} if the
inversion on $G$ is continuous.

Slightly reformulating the celebrated theorem of Comfort and Ross \cite[Theorem~1.2]{CR},
we can say that the pseudocompact topological groups are exactly the dense $C$-embedded
subgroups of compact topological groups. In particular, the Stone-\v{C}ech compactification,
$\beta{G}$, the Hewitt-Nachbin completion, $\upsilon{G}$, and the Ra\u{\i}kov completion,
$\varrho{G}$, of a pseudocompact topological group $G$ coincide.  Hence the Hewitt-Nachbin
completion of the group $G$ is again a topological group containing $G$ as a dense
$C$-embedded subgroup. Recently, with the idea to study connections between the properties
of a Tychonoff space $X$ and its dense $C$-embedded subspace $Y$ homeomorphic to
a topological group, the authors of \cite{AT1} introduced the new notions of $sm$-factorizable,
densely $sm$-factorizable and simply $sm$-factorizable (para)topological groups as follows:

\begin{definition}[See Definition~5.11 in \cite{AT1}]
A (para)topological group $G$ is called \textit{$sm$-factorizable} if for each co-zero set
$U$ in $G$, there exists a continuous homomorphism $\pi$ of $G$ onto a separable metrizable
(para)topological group $H$ such that the set $\pi(U$) is open in $H$ and $\pi^{-1}(\pi(U)) = U$.
Replacing the assumption that $\pi(U)$ is open by the requirement that $\pi(U)$ is dense in some
open subset of $H$, we obtain the definition of a \textit{densely $sm$-factorizable} (para)topological
group. Removing the assumption that $\pi(U)$ is open, we obtain the definition of a \textit{simply
$sm$-factorizable} (para)topological group.
\end{definition}

It is shown in \cite{AT1} that the following implications are valid (and none of these implications can
be inverted, see \cite[Example~5.12, Proposition~5.13]{AT1}):
$$
sm\text{-factorizability}\, \Rightarrow\, \text{dense~} sm\text{-factorizability}\, \Rightarrow\, \text{simple~}
sm\text{-factorizability}.
$$

Recall that a (para)topological group $G$ is called {\it $\mathbb{R}$-factorizable} if for every continuous
real-valued function on $G$ can be factorized through a continuous homomorphism onto a separable metrizable (para)topological group. Arhangel'skii and Tkachenko obtained that a (para)topological group
$G$ is $\mathbb{R}$-factorizable if and only if $G$ is $sm$-factorizable \cite[Theorem~5.9]{AT1}
(in view of the proof of \cite[Theorem~5.9]{AT1}, it is worth noting that it need not any separation axiom
on $G$). However, there is a Hausdorff $\omega$-narrow and simply $sm$-factorizable Abelian topological group is not $\mathbb{R}$-factorizable \cite[Corollary 5.20]{AT1}. In \cite[Corollary~5.24]{AT1} it is shown that subgroups of Lindel\"{o}f topological groups need not be simply $sm$-factorizable. Since subgroups of Lindel\"{o}f topological groups are $\omega$-narrow, $\omega$-narrow topological groups can fail to be simply $sm$-factorizable. Also, there exists a densely $sm$-factorizable topological Abelian group $G$ such that $ib(G) = 2^\omega$ \cite[Example~5.12]{AT1}, so densely $sm$-factorizable topological groups can fail to be $\omega$-narrow.
Now we summarize the relations between $\mathbb{R}$-factorizable, $sm$-factorizable, densely
$sm$-factorizable, simply $sm$-factorizable and $\omega$-narrow (para)topological groups as follows:
\begin{enumerate}
\item[(1)] $\mathbb{R}\text{-factorizability} \Leftrightarrow sm\text{-factorizability}$;
\item[(2)] $sm\text{-factorizability} \Rightarrow \text{dense~} sm\text{-factorizability} \Rightarrow
               \text{simple~}  sm\text{-factorizability};$
\item[(3)] regularity $\&$ $sm$-factorizability $\Rightarrow$ total $\omega$-narrowness (in paratopo\-logical
               groups);
\item[(4)] regularity $\&$ $\omega$-narrowness  $\not \Rightarrow$ simple $sm$-factorizability (in topological
               groups);
\item[(5)] regularity $\&$ dense $sm$-factorizability $\not\Rightarrow$ $\omega$-narrowness (in topological
               groups).
\end{enumerate}

 We denote the Dieudonn\'{e} completion and Hewitt-Nachbin realcompactification of a completely
 regular space $X$ by $\mu X$ and $\upsilon X$, respectively. Recall that a topological group $G$
 is called a {\it $PT$-group} if the group operations in $G$ can be continuously extended to
 Dieudonn\'{e} completion $\mu G$. Since every $\mathbb{R}$-factorizable topological group
 is a $PT$-group \cite[Corollary~8.3.7]{AT}, the authors of \cite{AT1} posed the following
 question:

 \begin{problem}[See Problem~7.6 in \cite{AT1}]\label{Q1.1}
Is every simply $sm$-factorizable topological group a $PT$-group? What
about densely $sm$-factorizable topological groups?
\end{problem}

Continuous open homomorphic images of $\mathbb{R}$-factorizable topological groups are
$\mathbb{R}$-factorizable \cite[Theorem~8.4.2]{AT}, but it is unknown whether continuous
homomorphisms preserve $\mathbb{R}$-factorizable topological groups \cite[Open~Problem~8.4.1]{AT}.
A weaker version of this problem is given below:

\begin{problem}[See Problem~7.8 in \cite{AT1}]\label{Q1.2}
Is every continuous homomorphic image of an $\mathbb{R}$-factorizable topological group
$G$ simply $sm$-factorizable?
\end{problem}

It is known, however, that continuous homomorphic images of simply $sm$-factorizable topological
groups need not be simply $sm$-factorizable \cite{AT1}. This makes it natural to rise the following
question:

\begin{problem}[See Problem~7.9 in \cite{AT1}]\label{Q1.3}
Is every quotient group of a simply $sm$-factorizable topological group
$G$ simply $sm$-factorizable? What if, additionally, $G$ is $\omega$-narrow?
\end{problem}

In Section~\ref{Sec:2}, we present a characterization of simply $sm$-factorizable (para)topo\-logical
groups in terms of continuous homomorphisms onto strongly submetrizable (para)topological groups.
Applying this characterization we give a positive answer to Problem~\ref{Q1.1} and partially answer Problems~\ref{Q1.2} and~\ref{Q1.3}.

The article is organized as follows. In Section~\ref{Sec:2}, we characterize simply $sm$-factorizable (para)topological groups. We establish the following facts:
(1) A (para)topological group $G$ is simply $sm$-factorizable if and only if for each continuous function
$f\colon G\to \mathbb{R}$, one can find a continuous homomorphism $\varphi$ of $G$ onto a strongly
submetrizable (para)topo\-logical group $H$ and a continuous function $g\colon H\to \mathbb{R}$
such that $f=g\circ\varphi$ (see Theorem~\ref{th1}); (2) an $\omega$-narrow topological group $G$ is
simply $sm$-factorizable if and only if for any continuous function $f\colon G\to \mathbb{R}$, there
is an invariant admissible subgroup $N_f$ of $G$ such that $f$ is constant on $gN_f$ for each $g\in G$
(see Theorem~\ref{Th3}). Section~\ref{Sec:3} contains a positive answer to a question posed by Arhangel'skii and Tkachenko \cite[Problem~7.8]{AT1}. We show that the equalities $\mu{G}=\varrho_\omega{G}=\upsilon{G}$ hold
for every Hausdorff simply $sm$-factorizable topological group $G$ and, therefore, $G$ is completion
friendly (see Theorem~\ref{Th3.2}).

In Section~\ref{Sec:4}, we study completions of simply $sm$-factorizable paratopological groups.
We establish the following:
(1) If $G$ is a regular simply $sm$-factorizable paratopological group, then the realcompactification
$\upsilon{G}$ of the space $G$ admits a natural structure of paratopological group containing $G$
as a dense subgroup and $\upsilon{G}$ is also simply $sm$-factorizable (Theorem~\ref{Th});
(2) If $G$ is a regular paratopological group such that the topological group $G^\ast$ associated
to $G$ is $\omega$-narrow and simply $sm$-factorizable, then the realcompactification $\upsilon{G}$
of $G$ admits a natural structure of paratopological group containing $G$ as a dense subgroup and
the equality $\upsilon{G}=\mu{G}$ holds (Theorem~\ref{Th3.14}).

Our results are accompanied with several examples and short discussions that outline the limits
for their generalizations.

\smallskip
We do not impose any separation restrictions on spaces and (para)topological groups unless
the separation axioms are stated explicitly. A space $X$ satisfies the $T_3$-separation axiom
if for any point $x\in X$ and a neighborhood $O$ of $x$, there is a neighborhood
$U$ of $x$ such that $\overline{U}\subseteq O$. A space $X$ is \emph{regular} if it is a
$T_3$-space satisfying the $T_1$-separation axiom.

Let $X$ be a space with topology $\tau$. Then the family $\{\text{Int}_\tau\hskip1pt \overline{U}:
\emptyset\neq U\in \tau\}$ constitutes a base for a coarser topology $\sigma$ on $X$.
The space $X_{sr} = (X, \sigma)$ is called the \emph{semiregularization} of $X$. One
of the main results regarding the semiregularization of paratopological groups is the
following important theorem proved by Ravsky in \cite{Rav01}:

\begin{theorem}\label{Th1}
Let $G$ be an arbitrary paratopological group. Then the space $G_{sr}$
carrying the same group structure is a $T_3$ paratopological group. If
$G$ is Hausdorff, then $G_{sr}$ is a regular paratopological group.
\end{theorem}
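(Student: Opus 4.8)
The plan is to establish the three assertions in turn, leaning on four elementary facts. Write $\tau$ for the topology of $G$ and $\sigma$ for that of $G_{sr}$, so $\sigma\subseteq\tau$ and a base of $\sigma$ consists of the $\tau$-regular-open sets $\mathrm{Int}_\tau\overline{U}^{\,\tau}$, $\emptyset\neq U\in\tau$. The four facts I would isolate first are: (a) in a paratopological group $AB$ is $\tau$-open whenever $A$ is, since $AB=\bigcup_{b\in B}Ab$ is a union of $\tau$-translates of $A$; (b) $\overline{A}^{\,\tau}\cdot\overline{B}^{\,\tau}\subseteq\overline{AB}^{\,\tau}$ for arbitrary $A,B\subseteq G$, because the multiplication is $\tau$-continuous and $\overline{A\times B}^{\,\tau}=\overline{A}^{\,\tau}\times\overline{B}^{\,\tau}$ in $G\times G$; (c) since left and right $\tau$-translations are $\tau$-homeomorphisms, they carry $\tau$-regular-open sets to $\tau$-regular-open sets, so the above base of $\sigma$ is translation-invariant, every translation is a self-homeomorphism of $G_{sr}$, and $G_{sr}$ is homogeneous — hence all local statements may be verified at the identity $e$ and tested only against $\tau$-regular-open neighbourhoods, since each basic $\sigma$-open set is $\tau$-regular-open; (d) $\overline{O}^{\,\sigma}=\overline{O}^{\,\tau}$ for every $\tau$-open $O$, where ``$\subseteq$'' is clear from $\sigma\subseteq\tau$ and, conversely, if $x\notin\overline{O}^{\,\tau}$ one picks $W\in\tau$ with $x\in W$ and $W\cap O=\emptyset$, notes $\overline{W}^{\,\tau}\cap O=\emptyset$ because $O$ is $\tau$-open, and concludes that $\mathrm{Int}_\tau\overline{W}^{\,\tau}$ is a $\sigma$-neighbourhood of $x$ disjoint from $O$.

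For joint $\sigma$-continuity of the multiplication I would fix $x,y$ and a $\sigma$-neighbourhood of $xy$, shrink it to a $\tau$-regular-open set $R\ni xy$, use $\tau$-continuity of the multiplication to get $\tau$-open $A\ni x$, $B\ni y$ with $AB\subseteq R$, and then take $O_1=\mathrm{Int}_\tau\overline{A}^{\,\tau}$ and $O_2=\mathrm{Int}_\tau\overline{B}^{\,\tau}$. These are $\sigma$-neighbourhoods of $x$ and $y$; by (a) the set $O_1O_2$ is $\tau$-open, by (b) it lies in $\overline{A}^{\,\tau}\overline{B}^{\,\tau}\subseteq\overline{AB}^{\,\tau}\subseteq\overline{R}^{\,\tau}$, hence $O_1O_2\subseteq\mathrm{Int}_\tau\overline{R}^{\,\tau}=R$. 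So $G_{sr}$ is a paratopological group.

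For the $T_3$ axiom it is enough, by homogeneity, to separate $e$ from a $\tau$-regular-open $\sigma$-neighbourhood $V\ni e$. Here I would use $\tau$-continuity of the multiplication at $(e,e)$ to choose a $\tau$-open $U\ni e$ with $UU\subseteq V$, and check that $\overline{U}^{\,\tau}\subseteq V$: for $x\in\overline{U}^{\,\tau}$ the $\tau$-open set $xU$ satisfies $xU\subseteq\overline{U}^{\,\tau}\overline{U}^{\,\tau}\subseteq\overline{UU}^{\,\tau}\subseteq\overline{V}^{\,\tau}$, so $xU\subseteq\mathrm{Int}_\tau\overline{V}^{\,\tau}=V$ and $x\in xU\subseteq V$. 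Then $O=\mathrm{Int}_\tau\overline{U}^{\,\tau}$ is a $\sigma$-neighbourhood of $e$ with $\overline{O}^{\,\sigma}=\overline{O}^{\,\tau}\subseteq\overline{U}^{\,\tau}\subseteq V$ by (d), giving $T_3$ at $e$ and hence everywhere. If $G$ is moreover Hausdorff, I would recall that Hausdorffness of a space is equivalent to: for each pair of distinct points, some open neighbourhood of one of them omits the other from its closure. Given $a\neq e$, Hausdorffness of $(G,\tau)$ supplies a $\tau$-open $U\ni e$ with $a\notin\overline{U}^{\,\tau}$; then $B=\mathrm{Int}_\tau\overline{U}^{\,\tau}$ is a $\sigma$-neighbourhood of $e$ with $\overline{B}^{\,\sigma}=\overline{B}^{\,\tau}\subseteq\overline{U}^{\,\tau}$, so $a\notin\overline{B}^{\,\sigma}$; by homogeneity $G_{sr}$ is Hausdorff, and being also $T_3$ it is regular.

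The step I expect to be the real obstacle — the only one where the group structure does genuine work, as opposed to formal manipulation of regular-open sets — is the ``creation of room'' inside the $T_3$ argument: replacing the regular-open $V$ by a $\tau$-open $U$ with $UU\subseteq V$ and then squeezing $\overline{U}^{\,\tau}$ back into $V$ via $\overline{U}^{\,\tau}\overline{U}^{\,\tau}\subseteq\overline{UU}^{\,\tau}$ and $\mathrm{Int}_\tau\overline{V}^{\,\tau}=V$. Everything else is soft once one has facts (a) and (d); indeed the analogous assertion for an arbitrary homogeneous space is false, which is precisely why the multiplication must be invoked.
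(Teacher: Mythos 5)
Your argument is correct and complete. Note first that the paper itself contains no proof of this statement: it is quoted as Ravsky's theorem with a reference to \cite{Rav01}, so there is no in-paper argument to compare with; what you have written is essentially the standard (Ravsky-style) proof, and every step checks out. Facts (a)--(d) are all valid (in (d) the direction you call ``clear from $\sigma\subseteq\tau$'' is actually the inclusion $\overline{O}^{\,\tau}\subseteq\overline{O}^{\,\sigma}$, while the inclusion you then prove ``conversely'' is the nontrivial one $\overline{O}^{\,\sigma}\subseteq\overline{O}^{\,\tau}$ --- a harmless labelling slip, since both directions are covered). The continuity-of-multiplication step is sound: the point that $O_1O_2$ need not be $\sigma$-open but, being $\tau$-open and contained in $\overline{R}^{\,\tau}$, lands inside $R=\mathrm{Int}_\tau\overline{R}^{\,\tau}$, is exactly the right use of regular-openness. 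The $T_3$ step via $UU\subseteq V$ and $\overline{U}^{\,\tau}\overline{U}^{\,\tau}\subseteq\overline{UU}^{\,\tau}$ is where the group structure genuinely enters, as you say. One small remark on the Hausdorff part: the implication ``$G$ Hausdorff $\Rightarrow$ $G_{sr}$ Hausdorff'' is purely topological (the semiregularization of any Hausdorff space is Hausdorff: disjoint open $U,V$ yield disjoint regular-open $\mathrm{Int}_\tau\overline{U}^{\,\tau},\mathrm{Int}_\tau\overline{V}^{\,\tau}$), so your appeal to homogeneity there, while correct, is not needed; combined with $T_3$ and the paper's convention that regular means $T_3$ plus $T_1$, this gives the second assertion exactly as you conclude.
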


The \emph{index of narrowness} of a paratopological group $G$ is denoted by $ib(G)$
(see \cite[Section~5.2]{AT}). By definition, $ib(G)$ is the smallest cardinal $\tau\geq \omega$
such that for each open neighborhood $U$ of the identity in $G$, there is a subset $A$ of $G$
satisfying $AU=UA=G$ and $|A|\leq \tau$. If $ib(G)=\omega$, then $G$ is called \emph{$\omega$-narrow.}

For a paratopological group $G$ with topology $\tau$, one defines the \textit{conjugate topology}
$\tau^{-1}$ on $G$ by $\tau^{-1}=\{U^{-1}: U\in \tau\}$. Then  $G' = (G, \tau^{-1})$ is
also a paratopological group, and the inversion $x\rightarrow x^{-1}$ is a homeomorphism
of $G$ onto $G'$. The upper bound $\tau^\ast=\tau\vee \tau^{-1}$ is a topological group topology
on $G$, and we say that the topological group $G^\ast=(G,\tau^\ast)$ is \textit{associated} to $G$.
A paratopological group $G$ is \emph{totally $\omega$-narrow} if the topological group $G^\ast$
associated to $G$ is $\omega$-narrow.

A paratopological group $G$ is called \emph{$\omega$-balanced} if for every neighborhood $U$ of the
identity $e$ in $G$ there is a countable family $\{V_n: n\in\omega\}$ of open neighborhoods of $e$ such
that for each $g\in G$, some $V_n$ satisfies $gV_n g^{-1}\subseteq U.$  In this case we say that the
family $\{V_n: n\in\omega\}$ is \emph{subordinated to $U$.} It is well known that every $\omega$-narrow topological group is $\omega$-balanced \cite[Proposition~3.4.10]{AT} and every totally $\omega$-narrow paratopological group is $\omega$-balanced \cite[Proposition~3.8]{ST2}.

In this paper, $c(X)$ and $\psi(X)$ stand for the cellularity and pseudocharacter
of $X$, respectively. The closure of a subset $Y$ of $X$ is denoted by $\overline{Y}$ or $\mathrm{cl}_X Y$
if we want to stress that the closure is taken in $X$. The cardinality of the continuum is
$\cont=2^\omega$.

\section{Characterizations of simply $sm$-factorizable (para)topological groups}\label{Sec:2}
In this section, we present some characterizations of simply $sm$-factorizable topological
and paratopological groups via continuous real-valued functions. The following notion plays
an important role in this article.

\begin{definition}
A (para)topological group $G$ is \emph{strongly submetrizable} if $G$ admits a coarser
separable metrizable (para)topological group topology or, equivalently, there exists a continuous
one-to-one homomorphism of $G$ onto a separable metrizable (para)topo\-logical group.
\end{definition}

It is clear that every strongly submetrizable (para)topological group is Hausdorff and
has countable pseudocharacter. Furthermore, the identity of a strongly submetrizable
paratopological group is the intersection of countably many \emph{closed} neighborhoods.

The following fact is obvious.

\begin{proposition}\label{le1}
\emph{Every  strongly submetrizable (para)topological group is simply $sm$-factorizable.}
\end{proposition}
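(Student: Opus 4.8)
The plan is to verify the definition of simple $sm$-factorizability directly, exploiting the fact that a strongly submetrizable group already comes equipped with a coarser separable metrizable topology. Let $G$ be strongly submetrizable, so by definition there is a continuous one-to-one homomorphism $\pi\colon G\to H$ onto a separable metrizable (para)topological group $H$. I claim this single homomorphism $\pi$ witnesses simple $sm$-factorizability for \emph{every} co-zero set $U$ of $G$ at once. Indeed, since $\pi$ is a bijection, we automatically have $\pi^{-1}(\pi(U))=U$ for any subset $U\subseteq G$, which is precisely the condition required in the definition of simply $sm$-factorizable group (recall that, unlike in the definitions of $sm$-factorizable and densely $sm$-factorizable groups, no requirement is imposed on $\pi(U)$ being open or dense in an open set).

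The only remaining point to check is that $H$ is a separable metrizable \emph{(para)topological group}, which is guaranteed by the definition of strong submetrizability, and that $\pi$ is a continuous \emph{onto} homomorphism, again immediate. So there is essentially nothing to prove beyond unwinding the two definitions: the single map $\pi$ works uniformly in $U$.

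I do not anticipate any genuine obstacle here — this is why the paper labels the statement "obvious." The one subtlety worth a sentence is that the definition of simply $sm$-factorizable group permits $\pi(U)$ to be an arbitrary (not necessarily open, not necessarily Borel) subset of $H$, so the injectivity of $\pi$ alone suffices and no further structure of co-zero sets needs to be invoked. Consequently the proof is a two-line argument: take the coarser separable metrizable group topology on $G$, let $\pi$ be the identity map from $G$ onto that coarser group, and observe that $\pi^{-1}(\pi(U))=U$ holds for all $U$ by bijectivity.
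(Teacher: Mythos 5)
Your argument is correct and is precisely the intended one: since the witnessing homomorphism $\pi$ onto a separable metrizable (para)topological group is injective, $\pi^{-1}(\pi(U))=U$ holds for every co-zero set $U$, which is all the definition of simple $sm$-factorizability requires. The paper gives no proof, labeling the fact obvious, and your unwinding of the two definitions is exactly that obvious argument.
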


Every $\omega$-narrow Hausdorff  topological group of countable pseudocharacter admits a
continuous one-to-one homomorphism onto a separable metrizable topological group (see \cite[Corollary~3.4.25]{AT}). Similarly, a totally $\omega$-narrow regular paratopological group
of countable pseudocharacter admits a continuous one-to-one homomorphism onto a separable
metrizable paratopological group (see \cite[Lemma~1.6]{PZ}). Thus we have:

\begin{lemma}\label{l1}
Every $\omega$-narrow Hausdorff topological group (totally $\omega$-narrow regular paratopological
group) of countable pseudocharacter is strongly submetrizable.
\end{lemma}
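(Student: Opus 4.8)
The plan is to note that, once the definition of strong submetrizability is unwound, the assertion is little more than a repackaging of the two facts recalled immediately before the statement. Indeed, a (para)topological group is strongly submetrizable exactly when it admits a \emph{condensation}, i.e.\ a continuous bijective homomorphism, onto a separable metrizable (para)topological group; so for a Hausdorff $\omega$-narrow topological group of countable pseudocharacter the conclusion is \cite[Corollary~3.4.25]{AT}, and for a regular totally $\omega$-narrow paratopological group of countable pseudocharacter it is \cite[Lemma~1.6]{PZ}. Still, I would like the argument to be reasonably self-contained, so I would reconstruct the required condensation directly.

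First I would use $\psi(G)\le\omega$ to fix a countable family $\{U_n:n\in\omega\}$ of open neighborhoods of the identity $e$ with $\bigcap_{n\in\omega}U_n=\{e\}$. The key step is then to produce, for each $n$, a continuous homomorphism $p_n\colon G\to K_n$ onto a \emph{second-countable} (para)topological group with $\ker p_n\subseteq U_n$. For a topological group $G$ this is the classical consequence of $\omega$-narrowness: since every $\omega$-narrow topological group is $\omega$-balanced, one finds inside $U_n$ a continuous invariant pseudometric whose zero-set $N_n$ is a closed invariant subgroup of $G$ contained in $U_n$, and $\omega$-narrowness forces the metric quotient $K_n=G/N_n$ to be separable, hence second countable. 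For a paratopological group $G$ one argues along parallel lines --- this is precisely the content of \cite[Lemma~1.6]{PZ} --- invoking that totally $\omega$-narrow paratopological groups are $\omega$-balanced \cite[Proposition~3.8]{ST2} and, crucially, that $G$ is regular, so that the quotient $K_n$ can be taken to be a $T_3$ and second-countable, hence metrizable, paratopological group.

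Finally I would form the diagonal homomorphism $p=\Delta_{n\in\omega}p_n\colon G\to\prod_{n\in\omega}K_n$. Its kernel is $\bigcap_{n\in\omega}\ker p_n\subseteq\bigcap_{n\in\omega}U_n=\{e\}$, so $p$ is injective, and the image $H=p(G)$ is a (para)topological group. A countable product of second-countable spaces is second countable; in the topological case $\prod_{n}K_n$ is moreover Hausdorff, hence metrizable by the Birkhoff--Kakutani theorem, while in the paratopological case each $K_n$ is regular, so $\prod_{n}K_n$ is a regular second-countable space and therefore metrizable by Urysohn's metrization theorem. In either case $H$ is separable and metrizable, and $p\colon G\to H$ witnesses that $G$ is strongly submetrizable.

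I expect the only genuinely non-formal ingredient to be the construction of the ``local'' second-countable quotients $p_n$, and it is exactly here that regularity must be assumed in the paratopological version: without it the quotient need not satisfy the $T_3$ axiom, and one loses the passage from second countability to metrizability via Urysohn's theorem. In the topological setting this difficulty disappears, since a Hausdorff topological group is automatically Tychonoff and a second-countable Hausdorff topological group is automatically metrizable, which is why the topological half of the lemma requires only the Hausdorff axiom. Keeping track of this asymmetry is the main thing to watch when filling in the details.
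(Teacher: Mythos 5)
Your proposal is correct and follows essentially the same route as the paper, which simply derives the lemma from \cite[Corollary~3.4.25]{AT} for the topological case and \cite[Lemma~1.6]{PZ} for the paratopological case. Your additional reconstruction (countable pseudocharacter plus $\omega$-narrowness/$\omega$-balancedness giving second-countable quotient homomorphisms with small kernels, then a diagonal product with trivial kernel) is just the standard argument behind those cited results, and it is sound — noting only that in the paratopological case the existence of the quotients $p_n$ also uses the countable index of regularity/Hausdorff number of a regular totally $\omega$-narrow group, which is exactly what the appeal to \cite{PZ} supplies.
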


Now we give a characterization of simply $sm$-factorizable (para)topological groups in terms of
continuous homomorphisms to strongly submetrizable (para)topological groups as follows:

\begin{theorem}\label{th1}
Let $G$ be a (para)topological group. Then the following statements are equivalent:
\begin{enumerate}
\item[(1)] $G$ is simply $sm$-factorizable;
\item[(2)] for each continuous function $f\colon G\to \mathbb{R}$, one can find a continuous
homomorphism $\pi$ of $G$ onto a strongly submetrizable (para)topological group $H$ and a
continuous function $g\colon H\to \mathbb{R}$ such that $f=g\circ \pi$.
\item[(3)] for each continuous function $f\colon G\to \mathbb{R}$, one can find a continuous
homomorphism $\pi$ of $G$ onto a regular strongly submetrizable (para)topological group
$H$ and a continuous function $g\colon H\to \mathbb{R}$ such that $f=g\circ \pi$.
\end{enumerate}
\end{theorem}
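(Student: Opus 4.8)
The plan is to prove the cycle $(1)\Rightarrow(3)\Rightarrow(2)\Rightarrow(1)$. The implication $(3)\Rightarrow(2)$ is immediate, since a regular strongly submetrizable group is in particular strongly submetrizable. For $(2)\Rightarrow(1)$, start with an arbitrary co-zero set $U=h^{-1}(\mathbb{R}\setminus\{0\})$ in $G$, where $h\colon G\to\mathbb{R}$ is continuous, and apply (2) to $f=h$ to obtain a continuous homomorphism $\pi$ of $G$ onto a strongly submetrizable (para)topological group $H$ and a continuous $g\colon H\to\mathbb{R}$ with $h=g\circ\pi$. Then $V:=g^{-1}(\mathbb{R}\setminus\{0\})$ is open in $H$ and $U=\pi^{-1}(V)$, so $\pi(U)=V$ and $\pi^{-1}(\pi(U))=U$. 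To meet the letter of the definition of simple $sm$-factorizability we compose $\pi$ with a continuous one-to-one homomorphism $j$ of $H$ onto a separable metrizable (para)topological group $M$ (which exists by strong submetrizability of $H$): putting $\pi'=j\circ\pi$, injectivity of $j$ gives $\pi'^{-1}(\pi'(U))=\pi^{-1}\bigl(j^{-1}(j(V))\bigr)=\pi^{-1}(V)=U$, while $\pi'$ is a continuous homomorphism of $G$ onto the separable metrizable group $M$. Hence $G$ is simply $sm$-factorizable; notice that no separation axiom on $G$ is used here.

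The heart of the matter is $(1)\Rightarrow(3)$. Fix a continuous $f\colon G\to\mathbb{R}$ and a countable base $\{O_n:n\in\omega\}$ of $\mathbb{R}$ formed by open intervals with rational endpoints. Each $f^{-1}(O_n)$ is a co-zero set of $G$, so simple $sm$-factorizability gives, for every $n$, a continuous homomorphism $\pi_n$ of $G$ onto a separable metrizable (para)topological group $H_n$ with $\pi_n^{-1}\bigl(\pi_n(f^{-1}(O_n))\bigr)=f^{-1}(O_n)$. Put $N=\bigcap_{n\in\omega}\ker\pi_n$; as an intersection of closed normal subgroups, $N$ is a closed normal subgroup, so the quotient $G/N$ is a (para)topological group and the quotient homomorphism $q\colon G\to G/N$ is continuous and open. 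The function $f$ is constant on every coset $xN$: if $q(x)=q(y)$ but $f(x)\neq f(y)$, pick $m$ with $f(x)\in O_m$ and $f(y)\notin O_m$; then $x\in f^{-1}(O_m)$, whereas $\pi_m(y)=\pi_m(x)\in\pi_m(f^{-1}(O_m))$ forces $y\in\pi_m^{-1}\bigl(\pi_m(f^{-1}(O_m))\bigr)=f^{-1}(O_m)$, a contradiction. Thus $f$ induces a map $\bar f\colon G/N\to\mathbb{R}$ with $f=\bar f\circ q$, and since $q$ is a quotient map, $\bar f$ is continuous.

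Next, the diagonal product $\pi=\Delta_{n\in\omega}\pi_n\colon G\to\prod_{n\in\omega}H_n$ has kernel $N$, hence induces a continuous one-to-one homomorphism of $G/N$ onto the subgroup $K=\pi(G)$ of $\prod_n H_n$; being a subspace of a countable product of separable metrizable spaces, $K$ is separable metrizable, and it is a (para)topological group. Therefore $G/N$ is strongly submetrizable, in particular Hausdorff. If $G$ (hence $G/N$) is a topological group we are done: a Hausdorff topological group is Tychonoff, so $H:=G/N$ is a regular strongly submetrizable topological group, and $\pi:=q$, $g:=\bar f$ work. In the paratopological case pass to the semiregularization $H:=(G/N)_{sr}$, which by Ravsky's Theorem~\ref{Th1} is a \emph{regular} paratopological group because $G/N$ is Hausdorff. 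Its topology is coarser than the quotient topology, so $q\colon G\to H$ is still a continuous homomorphism onto $H$. Both $\mathbb{R}$ and the separable metrizable group $K$ are regular, and a continuous map $h$ from a space $X$ into a regular space $Y$ stays continuous on $X_{sr}$ — given an open $V\ni h(x)$, choose open $V'$ with $h(x)\in V'$ and $\overline{V'}\subseteq V$; then $\mathrm{Int}_X\overline{h^{-1}(V')}$ is a regular-open neighbourhood of $x$ inside $\overline{h^{-1}(V')}\subseteq h^{-1}(\overline{V'})\subseteq h^{-1}(V)$, so $h^{-1}(V)$ is $\sigma$-open. Consequently $\bar f\colon H\to\mathbb{R}$ is continuous and the continuous bijective homomorphism $G/N\to K$ remains continuous on $H$, so $H$ is strongly submetrizable. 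Taking $\pi:=q\colon G\to H$ and $g:=\bar f$ establishes $(1)\Rightarrow(3)$.

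I expect the main obstacle to be precisely this last step in the paratopological setting: the quotient topology on $G/N$ is what keeps $g=\bar f$ continuous, but it may be strictly finer than the metrizable topology inherited from $\prod_n H_n$, so one cannot hope for $H$ to be separable metrizable — only strongly submetrizable — which is exactly why the theorem is phrased the way it is; and one still has to secure regularity of $H$ without wrecking the continuity of $g$, which is where Ravsky's theorem and the remark that continuous maps into regular spaces (such as $\mathbb{R}$ and separable metrizable groups) survive passage to the semiregularization come in. The topological-group parts and the verifications that $N$ is a closed normal subgroup, that $q$ is a quotient map, and that $K$ is a separable metrizable (para)topological group are routine.
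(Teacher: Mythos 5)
Your proof is correct and follows essentially the same route as the paper: a diagonal product of countably many factorizing homomorphisms indexed by a countable co-zero base of $\mathbb{R}$, the quotient topology (your $G/N$ is exactly the paper's $\pi(G)$ with the quotient topology) to get openness and hence continuity of the factored function, composition with a continuous isomorphism onto a separable metrizable group for $(2)\Rightarrow(1)$, and Ravsky's theorem together with the fact that maps into regular spaces stay continuous on the semiregularization to secure regularity in the paratopological case. The only differences are cosmetic (the cycle $(1)\Rightarrow(3)\Rightarrow(2)\Rightarrow(1)$ instead of proving $(1)\Leftrightarrow(2)$ and then $(2)\Leftrightarrow(3)$).
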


\begin{proof}
(1)\,$\Rightarrow$\,(2). Let $\mathcal {V}$ be a countable base of $\mathbb{R}$ consisting co-zero sets
and $f$ be a continuous real-valued function on $G$. For every $V\in\mathcal{V}$, let $U_V=f^{-1}(V)$.
Then each element of the family $\mathcal {U}=\{U_{V}: V\in \mathcal {V}\}$ is a co-zero set in $G$. Since
$G$ is simply $sm$-factorizable, for each $V\in\mathcal{V}$ we can find a separable metrizable (para)topological group $H_{V}$ and a continuous homomorphism $\pi_V$ of $G$ onto $H_V$ such that $U_V=\pi_{V}^{-1}(\pi_V(U_V))$. Let $\pi=\Delta_{V\in\mathcal {V}} \pi_V$ be the diagonal mapping of
the family $\{\pi_V: V\in\mathcal{V}\}$ and $\Pi=\prod_{V\in\mathcal {V}} H_V$ be the topological product
of the family $\{H_V: V\in\mathcal{V}\}$. Clearly, $\pi\colon G\to h(G)\subseteq \Pi$ is a continuous homomorphism and $H'= \pi(G)$ is a separable metrizable (para)topological group. Now let $H$ have
the same group structure as $H'$ and endow $H$ with the quotient topology with respect to $h$. Then
clearly $H$ is strongly submetrizable and $\pi\colon G\to H$ is open. Thus it suffices to show that there
is a continuous function $g \colon H\to\mathbb{R}$ satisfying $f=g\circ \pi$. Since $\pi$ is continuous
and open, the latter will follow if we show that the equality $f(g_1)=f(g_2)$ holds for all $g_1,g_2\in G$
with $\pi(g_1)=\pi(g_2)$. Indeed, if $f(g_1)\neq f(g_2)$, then there is a $V\in \mathcal {V}$ such that
$f(g_2)\in V$ and $f(g_1)\notin V$.
Thus $g_2\in U_V$ and $g_1\notin U_V$. Since $U_{V}=\pi_V^{-1}(\pi_V(U_V))$, we have
$$
\pi^{-1}(\pi(g_2))=\bigcap_{W\in \mathcal {V}} \pi_W^{-1}(\pi_W(g_2))\subseteq
\pi_V^{-1}(\pi_V(U_V))=U_V.
$$
This implies that $\pi(g_1)\neq \pi(g_2)$ and completes the proof of the implication.\smallskip

(2)\,$\Rightarrow$\,(1). Let $U$ be a co-zero set in $G$. Then there is a continuous function
$f\colon G\to\mathbb{R}$ such that $U=f^{-1}(\mathbb{R}\setminus \{0\})$. By (2), one can find a
strongly submetrizable (para)topo\-logical group $H$, a continuous homomorphism $\pi$ of $G$
onto $H$ and a continuous function $g\colon H\to \mathbb{R}$ such that $f=g\circ \pi$. Let
$i\colon H\to H'$ be a continuous isomorphism onto a separable metrizable (para)topological
group $H'$. Then $\varphi=i\circ \pi$ is a continuous homomorphism of $G$ onto the separable
metrizable group $H'$ and $U=\varphi^{-1}(\varphi(U))$. This shows that $G$ is simply
$sm$-factorizable.

Now we show that $(2)\Leftrightarrow (3)$. Since every $T_0$-topological group is regular,
the equivalence $(2)\Leftrightarrow (3)$ for topological groups is obvious.

For paratopological groups, it suffices to show that $(2)\Rightarrow (3)$. Let $G$ be a paratopological
group  satisfying $(2)$. Let also $f$ be a continuous real-valued function on $G$. By (2), one can
find a continuous homomorphism $\pi\colon G\to H$ onto a strongly submetrizable paratopological group
$H$ and a continuous function $g$ on $H$ such that $f=g\circ\pi$. Then $H$ is a Hausdorff paratopological
group. Let $H_{sr}$ be the semiregularization of $H$. Then $H_{sr}$ is a regular paratopological group,
by Theorem~\ref{Th1}. From the fact that let $f\colon X\to Y$ be a continuous mapping of $X$ to a
regular space $Y$; then $f$ remains continuous as a mapping of the semiregularization $X_{sr}$ of
$X$ to $Y$ \cite[Lemma~3.5]{XST}, one can easily see that $H_{sr}$ is a strongly submetrizable paratopological group and $g\colon H_{sr}\to \mathbb{R}$ is continuous. Clearly, $f=g\circ (i\circ \pi)$,
where $i\colon H\to H_{sr}$ is the identity mapping. The proof is complete.
\end{proof}

A space $X$ is called \emph{weakly Lindel\"{o}f} if for each open cover $\mathcal {U}$ of $X$, there
exists a countable subfamily $\mathcal {V}$ of $\mathcal {U}$ such that $\bigcup\mathcal {V}$ is
dense in $X$.

\begin{corollary}\cite[Proposition 5.18]{AT1}\label{C1}
Every weakly Lindel\"{o}f topological group $G$ is simply $sm$-factorizable.
\end{corollary}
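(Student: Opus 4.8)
The plan is to verify statement~(2) of Theorem~\ref{th1}: for an arbitrary continuous function $f\colon G\to\mathbb{R}$, I will produce a continuous homomorphism $\pi$ of $G$ onto a strongly submetrizable topological group $H$ together with a continuous $g\colon H\to\mathbb{R}$ satisfying $f=g\circ\pi$. The first, routine, step is to note that $G$ is $\omega$-narrow. Given a neighbourhood $U$ of the identity $e$, pick a symmetric open neighbourhood $V$ of $e$ with $V^{2}\subseteq U$; by weak Lindel\"ofness the cover $\{xV:x\in G\}$ admits a countable subfamily $\{x_{n}V:n\in\omega\}$ with dense union, and since every nonempty open set $gV$ ($g\in G$) meets $\bigcup_{n}x_{n}V$, we get $g\in x_{n}VV^{-1}\subseteq x_{n}U$ for some $n$; thus $\bigcup_{n}x_{n}U=G$, and the symmetric argument applied to $\{Vx:x\in G\}$ shows that $G$ is $\omega$-narrow. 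Consequently, by the standard description of $\omega$-narrow topological groups, there is a family $\{\pi_{i}\colon G\to M_{i}:i\in I\}$ of continuous homomorphisms onto separable metrizable topological groups such that the sets $\pi_{i}^{-1}(W)$, with $i\in I$ and $W$ open in $M_{i}$, form a subbase at $e$ for the topology of $G$.

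The heart of the argument is the claim that, for some countable $J\subseteq I$, the function $f$ is constant on every fibre of $\Delta_{i\in J}\pi_{i}$; equivalently, $f(a)=f(b)$ whenever $\pi_{i}(a)=\pi_{i}(b)$ for all $i\in J$. To prove it I would, for each $x\in G$ and each $k\in\omega$, choose a finite set $F_{x,k}\subseteq I$ and open neighbourhoods $W^{x,k}_{i}$ of $\pi_{i}(x)$ ($i\in F_{x,k}$) so that $f$ oscillates by less than $1/k$ on the basic neighbourhood $O_{x,k}=\bigcap_{i\in F_{x,k}}\pi_{i}^{-1}(W^{x,k}_{i})$ of $x$; then, for each $k$, weak Lindel\"ofness applied to the open cover $\{O_{x,k}:x\in G\}$ yields a countable $D_{k}\subseteq G$ with $\bigcup_{x\in D_{k}}O_{x,k}$ dense in $G$, and I set $J=\bigcup_{k\in\omega}\bigcup_{x\in D_{k}}F_{x,k}$, which is countable. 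That this $J$ works is an instance of the classical fact that every continuous real-valued function on a weakly Lindel\"of (more generally, pseudo-$\aleph_{1}$-compact) subspace of a topological product depends on countably many coordinates; one proves it by a density argument, approximating $a$ and $b$ by points of the relevant dense sets lying in a common $O_{x,k}$.

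Granting the claim, put $N=\bigcap_{i\in J}\ker\pi_{i}$, a closed subgroup of $G$, and let $H=G/N$ carry the \emph{quotient} topology, with $\pi\colon G\to H$ the canonical quotient homomorphism, so that $\pi$ is continuous and open. Since $\ker\bigl(\Delta_{i\in J}\pi_{i}\bigr)=N$, the homomorphism $\Delta_{i\in J}\pi_{i}$ induces a continuous injective homomorphism of $H$ into the separable metrizable group $\prod_{i\in J}M_{i}$; hence $H$ admits a coarser separable metrizable group topology, i.e.\ $H$ is strongly submetrizable. The fibres of $\pi$ are precisely the sets $\{y\in G:\pi_{i}(y)=\pi_{i}(x)\text{ for all }i\in J\}$, so by the claim $f$ is constant on each fibre of $\pi$; since $\pi$ is a continuous open surjection and $f$ is continuous, there is a continuous $g\colon H\to\mathbb{R}$ with $f=g\circ\pi$. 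An appeal to Theorem~\ref{th1} then shows that $G$ is simply $sm$-factorizable.

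The step I expect to be the main obstacle is the claim that $f$ is constant on the fibres of $\Delta_{i\in J}\pi_{i}$. Weak Lindel\"ofness supplies only countable subfamilies with \emph{dense} union rather than genuine countable subcovers, so one must argue with some care to pass from ``$f$ oscillates little on a dense collection of coordinate neighbourhoods'' to ``$f$ depends on the countable set $J$''; this is exactly where weak Lindel\"ofness enters essentially, and it cannot be weakened to $\omega$-narrowness, since there are $\omega$-narrow topological groups that are not simply $sm$-factorizable. It is also worth stressing that equipping $H=G/N$ with the quotient topology — rather than viewing the image of $\Delta_{i\in J}\pi_{i}$ with its coarser separable metrizable topology — is what makes $g$ automatically continuous while leaving $H$ only strongly submetrizable, precisely the flexibility that Theorem~\ref{th1} affords over genuine $\mathbb{R}$-factorizability.
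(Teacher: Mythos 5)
Your argument is correct in substance, but it follows a genuinely different, more self-contained route than the paper. The paper disposes of the corollary in a few lines: it quotes \cite[Theorem~8.1.18]{AT} (every continuous real-valued function on a weakly Lindel\"of topological group factors through a continuous homomorphism onto a group of countable pseudocharacter), notes that weakly Lindel\"of groups are $\omega$-narrow \cite[Proposition~5.2.8]{AT}, concludes from Lemma~\ref{l1} that the image group is strongly submetrizable, and then applies Theorem~\ref{th1}. You instead reprove the factorization ingredient from scratch: $\omega$-narrowness of $G$, a Guran-type family of coordinate homomorphisms onto separable metrizable groups, an oscillation argument driven by weak Lindel\"ofness producing a countable coordinate set $J$, and the quotient $H=G/N$ with the quotient topology, after which Theorem~\ref{th1} finishes exactly as in the paper. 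The skeleton (factor $f$ through an $\omega$-narrow group of countable pseudocharacter, observe it is strongly submetrizable, invoke Theorem~\ref{th1}) is the same; what your version buys is an explicit proof of the cited factorization theorem and a transparent localization of where weak Lindel\"ofness is genuinely used, at the cost of length.

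One step needs more care than your appeal to the classical ``dependence on countably many coordinates'' fact suggests: that fact concerns subspaces that are \emph{dense} in a product, and the diagonal image of $G$ in $\prod_{i\in I}M_i$ need not be dense, so it does not apply verbatim. The claim can nonetheless be proved along exactly the lines you sketch, provided the group structure is used to produce approximants of $a$ and $b$ in a \emph{common} $O_{x,k}$: given $a,b$ with $\pi_i(a)=\pi_i(b)$ for all $i\in J$ and given $k$, choose a basic neighbourhood $W=\bigcap_{i\in F}\pi_i^{-1}(W_i)$ of the identity such that $f$ oscillates by less than $1/k$ on both $aW$ and $bW$; by density, $aW$ meets some $O_{x,k}$ with $x\in D_k$, say at $a'=aw$ with $w\in W$; since $F_{x,k}\subseteq J$, the set $O_{x,k}$ is saturated with respect to the coordinates in $J$, and $\pi_i(bw)=\pi_i(aw)$ for every $i\in J$, so $b'=bw\in bW\cap O_{x,k}$; hence $|f(a)-f(b)|<3/k$, and letting $k\to\infty$ gives $f(a)=f(b)$. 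With this detail supplied (and with the routine passage to the Hausdorff reflection if no separation axiom is assumed, an issue the paper's own proof also glosses over when it applies Lemma~\ref{l1}), your proof is complete.
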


\begin{proof}
Take any continuous function $f\colon G\to \mathbb{R}$.  According to \cite[Theorem~8.1.18]{AT},
one can find a continuous homomorphism $\pi\colon G \to H$ onto a topological group $H$ such that
the pseudocharacter of $H$ is countable and a continuous real-valued function $h$ on $H$ such that
$f=h\circ \pi$. Every weakly Lindel\"{o}f topological group is $\omega$-narrow \cite[Proposition~5.2.8]{AT}.
Hence the group $G$ and its continuous homomorphic image $H$ are $\omega$-narrow as well.
According to Lemma~\ref{l1} $H$ is a strongly submetrizable topological group and therefore, $G$
is simply $sm$-factorizable by Theorem~\ref{th1}.
\end{proof}

Let $\varphi\colon G\to H$ be a continuous surjective homomorphism of semitopological groups.
The pair $(H, \varphi)$ is called a \emph{$T_2$-reflection} of $G$ if $H$ is a Hausdorff
semitopological group and for every continuous mapping $f\colon G \to X$ of $G$ to a Hausdorff space
$X$, there exists a continuous mapping $h\colon H\to X$ such that $f = h\circ\varphi$. Abusing of
terminology we say that $T_2(G)$ is the $T_2$-reflection of $G$, thus omitting the corresponding homomorphism $\varphi$ (see \cite{T2}). The homomorphism $\varphi$ is denoted by $\varphi_{ G, 2}$
and called the \emph{canonical homomorphism} of $G$ onto $T_2(G)$.

\begin{proposition}\label{Po1}
\emph{A paratopological group $G$ is simply $sm$-factorizable if and only if so is the  $T_2$-reflection
$T_2(G)$ of $G$.}
\end{proposition}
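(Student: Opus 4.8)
The plan is to prove both implications by exploiting the characterization of simple $sm$-factorizability via continuous real-valued functions given in Theorem~\ref{th1}, together with the defining property of the $T_2$-reflection. Recall that the canonical homomorphism $\varphi_{G,2}\colon G\to T_2(G)$ is a continuous surjective homomorphism, and that every continuous map from $G$ to a Hausdorff space factors through it. Since separable metrizable (para)topological groups and, more generally, strongly submetrizable (para)topological groups are Hausdorff, this factorization property will let me transfer the relevant continuous homomorphisms back and forth between $G$ and $T_2(G)$.

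First I would prove the ``only if'' direction. Assume $G$ is simply $sm$-factorizable and let $f\colon T_2(G)\to\mathbb{R}$ be an arbitrary continuous function. Then $f\circ\varphi_{G,2}\colon G\to\mathbb{R}$ is continuous, so by Theorem~\ref{th1} there is a continuous homomorphism $\pi\colon G\to H$ onto a strongly submetrizable paratopological group $H$ and a continuous function $g\colon H\to\mathbb{R}$ with $f\circ\varphi_{G,2}=g\circ\pi$. Since $H$ is Hausdorff, the universal property of $T_2(G)$ yields a continuous homomorphism $\psi\colon T_2(G)\to H$ with $\pi=\psi\circ\varphi_{G,2}$; note $\psi$ is surjective because $\pi$ is. Then $f\circ\varphi_{G,2}=g\circ\psi\circ\varphi_{G,2}$, and since $\varphi_{G,2}$ is surjective this gives $f=g\circ\psi$. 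Applying Theorem~\ref{th1} in the reverse direction to $T_2(G)$ shows that $T_2(G)$ is simply $sm$-factorizable.

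For the ``if'' direction, assume $T_2(G)$ is simply $sm$-factorizable and let $f\colon G\to\mathbb{R}$ be continuous. Since $\mathbb{R}$ is Hausdorff, $f$ factors as $f=\bar f\circ\varphi_{G,2}$ for some continuous $\bar f\colon T_2(G)\to\mathbb{R}$. By Theorem~\ref{th1} applied to $T_2(G)$, there are a continuous homomorphism $\pi\colon T_2(G)\to H$ onto a strongly submetrizable paratopological group $H$ and a continuous $g\colon H\to\mathbb{R}$ with $\bar f=g\circ\pi$. Then $\pi\circ\varphi_{G,2}\colon G\to H$ is a continuous surjective homomorphism onto a strongly submetrizable paratopological group, and $f=\bar f\circ\varphi_{G,2}=g\circ(\pi\circ\varphi_{G,2})$, so Theorem~\ref{th1} shows $G$ is simply $sm$-factorizable.

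The only point requiring a little care is verifying that the factoring maps $\psi$ (resp.\ the induced homomorphism out of $T_2(G)$) are indeed group homomorphisms and not merely continuous maps; this follows because $\varphi_{G,2}$ is a surjective homomorphism and the relation $\psi(\varphi_{G,2}(x)\varphi_{G,2}(y))=\psi(\varphi_{G,2}(xy))=\pi(xy)=\pi(x)\pi(y)=\psi(\varphi_{G,2}(x))\psi(\varphi_{G,2}(y))$ holds on the dense (indeed full) image $\varphi_{G,2}(G)$. I expect no genuine obstacle here; the argument is a routine diagram chase once Theorem~\ref{th1} is in hand, and the main thing to get right is simply invoking the Hausdorffness of strongly submetrizable groups so that the $T_2$-reflection's universal property applies.
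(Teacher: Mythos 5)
Your proposal is correct and follows essentially the same route as the paper: both directions factor the given real-valued function through Theorem~\ref{th1} and use the universal property of the $T_2$-reflection (valid because strongly submetrizable groups and $\mathbb{R}$ are Hausdorff), including the same observation that the induced map $\psi$ out of $T_2(G)$ is a homomorphism since $\varphi_{G,2}$ and $\pi$ are. No gaps.
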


\begin{proof}
Let $G$ be simply $sm$-factorizable. Take any continuous function $f\colon T_2(G)\to \mathbb{R}$.
Then $f\circ \varphi_{G,2}$ is continuous real-valued function on $G$, and therefore, we can find a strongly submetrizable paratopological group $H$, a continuous homomorphism $\pi$ of $G$ onto $H$ and a
continuous function $g\colon H\to \mathbb{R}$ such that $f\circ \varphi_{G,2}=g\circ\pi$, by Theorem~\ref{th1}. Since $H$ is a Hausdorff paratopological group, there is continuous map $p\colon T_2(G)\to H$ such that $p\circ \varphi_{G,2}=\pi$. Observing that $\varphi_{G,2}$ and $\pi$ are homomorphisms, one can easily
show that $p$ is also a homomorphism. Clearly, the subgroup $p(T_2(G))$ of $H$ is strongly submetrizable and $f=g\circ{p}$, so $T_2(G)$ is simply $sm$-factorizable by Theorem~\ref{th1}.

Let $T_2(G)$ be simply $sm$-factorizable. Take any continuous function $f\colon G\to \mathbb{R}$.
Since $\mathbb{R}$ is a Hausdorff space, there is continuous function $h\colon T_2(G)\to \mathbb{R}$
such that $h\circ \varphi_{G,2}=f$. Further, combining Theorem~\ref{th1} and the fact that $T_2(G)$ is simply $sm$-factorizable we see that there are a strongly submetrizable paratopological group $H$, a continuous homomorphism $\pi$ of $T_2(G)$ onto $H$ and a continuous function $g\colon H\to \mathbb{R}$ such that $h=g\circ\pi$. Thus we have the equality
$$
f=h\circ \varphi_{G,2}=g\circ (\pi\circ \varphi_{G,2}).
$$
 By Theorem~\ref{th1}, this implies that $G$ is simply $sm$-factorizable.
\end{proof}

\begin{proposition}\label{Po2}
\emph{A paratopological group $G$ is simply $sm$-factorizable if and only if so is the semiregularization
$G_{sr}$ of $G$.}
\end{proposition}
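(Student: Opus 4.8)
The plan is to mimic the strategy used for Proposition~\ref{Po1}, replacing the $T_2$-reflection by the semiregularization and using the identity map $i\colon G\to G_{sr}$ in place of the canonical homomorphism $\varphi_{G,2}$. The two directions rely on opposite facts about how continuous functions interact with semiregularization. For the ``easy'' implication I will use that $\mathbb{R}$ is regular, so that by \cite[Lemma~3.5]{XST} every continuous real-valued function on $G$ remains continuous on $G_{sr}$; for the other implication I will use that $G$ and $G_{sr}$ have the same cozero sets, or equivalently that the identity $i\colon G_{sr}\to G$ is continuous, so a continuous function on $G_{sr}$ pulls back to one on $G$.

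First, suppose $G_{sr}$ is simply $sm$-factorizable and let $f\colon G\to\mathbb{R}$ be continuous. Since $\mathbb{R}$ is regular, $f$ is also continuous as a function on $G_{sr}$ by \cite[Lemma~3.5]{XST}. Applying Theorem~\ref{th1} to $G_{sr}$, we obtain a strongly submetrizable (para)topological group $H$, a continuous homomorphism $\pi\colon G_{sr}\to H$, and a continuous function $g\colon H\to\mathbb{R}$ with $f=g\circ\pi$. Composing $\pi$ with the identity map $i\colon G\to G_{sr}$ (which is continuous) yields a continuous homomorphism $\pi\circ i\colon G\to H$ onto the same strongly submetrizable group, and $f=g\circ(\pi\circ i)$. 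By Theorem~\ref{th1} again, $G$ is simply $sm$-factorizable.

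Conversely, suppose $G$ is simply $sm$-factorizable and let $f\colon G_{sr}\to\mathbb{R}$ be continuous. Then $f\circ i\colon G\to\mathbb{R}$ is continuous (since $i\colon G\to G_{sr}$ is continuous), so by Theorem~\ref{th1} there are a strongly submetrizable (para)topological group $H$, a continuous homomorphism $\pi\colon G\to H$, and a continuous function $g\colon H\to\mathbb{R}$ with $f\circ i=g\circ\pi$. Here $H$ is Hausdorff; by Theorem~\ref{th1}(3) we may even take $H$ regular, and this is the point where I expect the only real subtlety to lie, since I need $\pi$ to factor through $i$. Because $H$ is regular, the continuous map $\pi\colon G\to H$ remains continuous on the semiregularization $G_{sr}$, again by \cite[Lemma~3.5]{XST}; call the resulting continuous homomorphism $\pi'\colon G_{sr}\to H$. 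Since $i$ is a bijection, $\pi'$ literally equals $\pi$ as a set map, and we get $f=g\circ\pi'$ directly on $G_{sr}$ (the equality $f\circ i = g\circ\pi$ transports to $f=g\circ\pi'$ because $i$ is onto). As $H$ is strongly submetrizable, Theorem~\ref{th1} gives that $G_{sr}$ is simply $sm$-factorizable, completing the proof. $\proved$
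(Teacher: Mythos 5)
Your argument is correct and essentially the same as the paper's: both directions rest on \cite[Lemma~3.5]{XST} together with the continuity of the identity map $G\to G_{sr}$, and you rightly isolate the key step of invoking the regular version (item (3)) of Theorem~\ref{th1} so that the homomorphism $\pi$ remains continuous on the semiregularization. The only slip is in your preliminary plan, where you assert that $i\colon G_{sr}\to G$ is continuous---since the semiregular topology is coarser, it is the identity in the opposite direction, $G\to G_{sr}$, that is continuous---but your proof body uses the correct direction throughout, so nothing is affected.
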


\begin{proof}
Let $G$ be simply $sm$-factorizable. Take any continuous function $f\colon G_{sr}\to \mathbb{R}$.
Clearly, $f$ is continuous on $G$, and applying Theorem~\ref{th1} we find a regular strongly submetrizable paratopological group $H$, a continuous homomorphism $\pi$ of $G$ onto $H$ and a continuous function $g\colon H\to \mathbb{R}$ such that $f\circ \varphi_{G,2}=g\circ\pi$. Since $H$ is regular, by \cite[Lemma~3.5]{XST} $\pi$ is also continuous on $G_{sr}$. Hence $G_{sr}$ is simply $sm$-factorizable according to Theorem~\ref{th1}.

Let $G_{sr}$ be simply $sm$-factorizable. Take any continuous function $f\colon G\to \mathbb{R}$.
Then by \cite[Lemma~3.5]{XST}, $f$ is also continuous on $G_{sr}$. So Theorem~\ref{th1} implies
that there are strongly submetrizable paratopological group $H$, a continuous homomorphism $\pi$
of $G_{sr}$ onto $H$ and a continuous function $g\colon H\to \mathbb{R}$ such that $f\circ \varphi_{G,2}=g\circ\pi$. Thus we have that $f=g\circ (\pi\circ i)$, where $i\colon G\to G_{sr}$ is the identity
mapping. By Theorem~\ref{th1}, this implies that $G$ is simply $sm$-factorizable.
\end{proof}

Let $f\colon X \to Y$ be a continuous mapping. Then $f$ is said to be \emph{$d$-open}
if for each open subset $O$ of $X$ there exists an open subset $V$ of $Y$ such that $f(O)$ is a
dense subset of $V$ or, equivalently, $f(O)$ is a subset of the interior of the closure of $f(O)$ in $Y$.

We recall that a Hausdorff paratopological group $G$ has countable \emph{Hausdorff number}, in symbols
$Hs(G)\leq\omega$, if for each neighborhood $U$ of the identity $e$ in $G$ there is a countable family
$\{U_n: n\in\omega\}$ of open neighborhoods of $e$ such that $\bigcap_{n\in \omega} U_nU_n^{-1}
\subseteq U$.

\begin{lemma}\label{Le1}
Let $G$ be a Hausdorff weakly Lindel\"{o}f paratopological group with $Hs(G)\leq \omega$. If $G$ is
$\omega$-balanced, then for any continuous real-valued function $f$ on $G$ one can find a $d$-open homomorphism $\pi\colon G\to K$ onto a regular paratopological group $K$ of countable pseudocharacter
and a continuous function $h\colon K\to \mathbb{R}$ such that $f=h\circ \pi$.
\end{lemma}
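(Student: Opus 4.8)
The plan is to produce an invariant \emph{closed} subgroup $N$ of $G$ that is the intersection of a \emph{countable} family $\gamma$ of open neighbourhoods of the identity $e$, on whose cosets $f$ is constant, and then to obtain $K$ as a regular paratopological group having the quotient $G/N$ as an intermediate group. Since for a normal subgroup $N$ the canonical map $\pi_0\colon G\to G/N$ is an open continuous homomorphism, once $N$ has the listed properties the quotient $G/N$ is a $T_1$ paratopological group with $\psi(G/N)\le\omega$ and $f$ factors through $\pi_0$; composing with the passage to the semiregularization then yields $K$, which is regular by Theorem~\ref{Th1}, still has countable pseudocharacter, receives a $d$-open homomorphism $\pi$ from $G$, and through which $f$ still factors with a continuous factor.

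The heart is the construction of $\gamma$ by a closing-off argument, making it closed under the following four operations (and keeping it countable, since each operation adds a countable family and only countably many are performed). (i) \emph{Square roots:} for every $V\in\gamma$ there is $V'\in\gamma$ with $V'V'\subseteq V$ — available from joint continuity of multiplication. (ii) \emph{Hausdorff-number witnesses:} for every $V\in\gamma$ there is a countable $\{V_j:j\in\omega\}\subseteq\gamma$ with $\bigcap_j V_jV_j^{-1}\subseteq V$ — available since $Hs(G)\le\omega$. (iii) \emph{Subordinated families:} for every $V\in\gamma$ there is a countable subfamily of $\gamma$ subordinated to $V$ — available since $G$ is $\omega$-balanced. (iv) \emph{$f$-control:} for every $n\ge 1$, $\gamma$ contains neighbourhoods $W_{a,n}$ ($a$ ranging over a countable $A_n\subseteq G$) with $\mathrm{osc}(f,aW_{a,n}W_{a,n})<1/n$ for each $a\in A_n$ (here $\mathrm{osc}(f,B)=\sup\{|f(x)-f(y)|:x,y\in B\}$) and $\bigcup_{a\in A_n}aW_{a,n}$ dense in $G$. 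For (iv) one uses that $(u,v)\mapsto f(xuv)$ is continuous at $(e,e)$ to choose, for each $x\in G$, an open $W_x\ni e$ with $\mathrm{osc}(f,xW_xW_x)<1/n$, and then extracts from the open cover $\{xW_x:x\in G\}$ a countable subfamily with dense union by weak Lindel\"{o}fness of $G$.

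Granting this, $N=\bigcap\gamma$ works. From (i), $NN\subseteq N$ and also $\psi(G/N)\le\omega$: if $V'V'\subseteq V$ with $V'\in\gamma$ then $V'N\subseteq V'V'\subseteq V$, so $\bigcap_{W\in\gamma}WN\subseteq N$, whence $\bigcap_{V\in\gamma}\pi_0(V)=\{\pi_0(e)\}$. From (ii), $NN^{-1}\subseteq\bigcap_jV_jV_j^{-1}\subseteq V$ for every $V\in\gamma$, so $NN^{-1}\subseteq N$ and $N$ is a subgroup; moreover, using $\overline A\subseteq AU^{-1}$ for any neighbourhood $U$ of $e$, one gets $\overline N\subseteq\bigcap_j\overline{V_j}\subseteq\bigcap_jV_jV_j^{-1}\subseteq V$, so $\overline N\subseteq N$ and $N$ is closed. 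From (iii), $gNg^{-1}\subseteq N$ for all $g$, so $N$ is invariant. Finally $f$ is constant on each coset $gN$: if not, pick $g\in G$, $h\in N$ with $\delta:=|f(g)-f(gh)|>0$ and $n$ with $3/n<\delta$; since right translation by $h$ is a homeomorphism, choose an open $P\ni g$ with $\mathrm{osc}(f,P)<1/n$ and $\mathrm{osc}(f,Ph)<1/n$; by density of $\bigcup_{a\in A_n}aW_{a,n}$ pick $a\in A_n$ and $p=aw\in P\cap aW_{a,n}$, $w\in W_{a,n}$; then $ph=awh\in aW_{a,n}W_{a,n}$ since $h\in N\subseteq W_{a,n}$, and $p\in aW_{a,n}W_{a,n}$ as well, so
\[
|f(g)-f(gh)|\le|f(g)-f(p)|+|f(p)-f(ph)|+|f(ph)-f(gh)|<3/n<\delta,
\]
a contradiction.

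It remains to pass from $G/N$ to $K$. I would take $K$ to be the semiregularization of $G/N$ (replacing $G/N$ by a Hausdorff reflection first if needed to secure the $T_1$ property): by Theorem~\ref{Th1} this is a regular paratopological group, its pseudocharacter stays countable, $\pi\colon G\to K$ is $d$-open because $\pi_0$ is open and the identity map onto a semiregularization is $d$-open (and $d$-openness is preserved under composition), and $f=h\circ\pi$ for a continuous $h$ because the real-valued factor of $f$ on $G/N$ remains continuous on the semiregularization by \cite[Lemma~3.5]{XST}, $\mathbb{R}$ being regular. I expect the genuine obstacle to be steps (iv) and the coset-invariance argument: the absence of continuous inversion forces one to control $\mathrm{osc}(f)$ on the \emph{doubled} sets $xW_xW_x$ rather than on $xW_x$ and to absorb $N$ on the right via $N\subseteq W_{a,n}$, so that the density-plus-continuity estimate survives; keeping $\gamma$ countable while it simultaneously delivers the subgroup, invariance, closedness, pseudocharacter, and $f$-control properties is the other delicate point, whereas the final assembly of $K$ is routine.
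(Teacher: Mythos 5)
Your closing-off construction of the countable family $\gamma$ and of $N=\bigcap\gamma$ is correct (the subgroup, closedness, invariance, $\psi(G/N)\le\omega$ and the constancy of $f$ on the cosets $gN$ all check out), and it in effect re-proves the result that the paper simply quotes: the published proof is a two-line reduction to \cite[Theorem~2.4]{PZ}, which supplies an \emph{open} homomorphism onto a \emph{Hausdorff} paratopological group $K$ with $f=h\circ\pi$ and, crucially, a sequence $\{V_n\colon n\in\omega\}$ of neighborhoods of $e_K$ with $\{e_K\}=\bigcap_{n\in\omega}\overline{V_n}$, followed by exactly the semiregularization step you perform at the end. So your route is a self-contained version of the same strategy rather than a genuinely different one.

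The genuine gap is in the part you dismiss as routine. To apply Theorem~\ref{Th1} and conclude that $K=(G/N)_{sr}$ is \emph{regular} (in this paper, $T_3$ plus $T_1$) you need $G/N$ to be \emph{Hausdorff}: closedness of $N$ gives only $T_1$, Ravsky's theorem gives only $T_3$ for non-Hausdorff groups, and semiregularization can destroy $T_1$, so your parenthetical about a ``Hausdorff reflection to secure the $T_1$ property'' does not address the real issue. Likewise, your claim that the pseudocharacter ``stays countable'' under semiregularization is unjustified: a countable family of open sets meeting in $\{\bar e\}$ need not have small closures, and $\psi$ can increase when one passes to the coarser topology of $(G/N)_{sr}$; this is precisely why the paper extracts from \cite[Theorem~2.4]{PZ} the stronger property $\{e_K\}=\bigcap_n\overline{V_n}$ instead of mere countable pseudocharacter. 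Both defects are repairable inside your own scheme. Given $V\in\gamma$, take witnesses $V_j\in\gamma$ with $\bigcap_j V_jV_j^{-1}\subseteq V$ from (ii) and $V_j'\in\gamma$ with $V_j'V_j'\subseteq V_j$ from (i); since $e\in V_j'$ we get $V_j'\subseteq V_j$, and using normality of $N$ and $N\subseteq V_j'$ we obtain $V_j'V_j'^{-1}N=V_j'NV_j'^{-1}\subseteq V_j'V_j'V_j'^{-1}\subseteq V_jV_j^{-1}$, whence $\bigcap_{W\in\gamma}WW^{-1}N\subseteq N$. In the quotient this reads $\bigcap_{W\in\gamma}\pi_0(W)\pi_0(W)^{-1}=\{\bar e\}$, which gives Hausdorffness of $G/N$ and, since $\overline{\pi_0(W)}\subseteq\pi_0(W)\pi_0(W)^{-1}$, also $\{\bar e\}=\bigcap_{W\in\gamma}\overline{\pi_0(W)}$; then the sets $\mathrm{Int}\,\mathrm{cl}\,\pi_0(W)$ are open in $(G/N)_{sr}$ and witness $\psi(K)\le\omega$, exactly as the paper argues with the closures $\overline{V_n}$. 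With this supplement your proof is complete.
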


\begin{proof}
Let $f\colon G\to\mathbb{R}$ be a continuous function. By \cite[Theorem~2.4]{PZ}, we can find an open
continuous homomorphism $\pi\colon G\to K$ of $ G$ onto a Hausdorff paratopological group $K$,
a continuous function $h$ on $K$ and a sequence $\{V_n: n \in \omega \}$ of open neighborhoods of the
identity $e_K$ in $K$ such that $f=h\circ\pi$ and $\{e_K\}=\bigcap_{n\in \omega}\overline{V_n}$. Let
$K_{sr}$ be the semiregularization of $K$. Since $K$ is a Hausdorff paratopological group, $K_{sr}$
is a regular paratopological group by Theorem~\ref{Th1}. Clearly, the elements of the sequence
$\{\mathrm{Int}_K \mathrm{cl}_K{V_n}: n\in \omega \}$ are open neighborhoods of the identity
$e_K$ in $K_{sr}$ and $\{e_K\}=\bigcap_{n\in \omega}\mathrm{Int}_K\mathrm{cl}_K V_n$. This
implies that $K_{sr}$ has countable pseudocharacter. By \cite[Lemma~3.5]{XST}, $h$ remains
continuous on $K_{sr}$. Since the identity mapping $i\colon X\to X_{sr}$ is $d$-open and continuous
for any space $X$ \cite[Lemma~3.2]{XY}, one can easily check that $i\circ\pi\colon G\to K_{sr}$ is
a $d$-open continuous homomorphism. Clearly, $f=h\circ (i\circ\pi)$. This completes the proof.
\end{proof}

\begin{corollary}[See Theorem~2.9 in \cite{PZ}]\label{C2.8}
Every totally $\omega$-narrow weakly Lindel\"{o}f paratopological group $G$ is simply
$sm$-factorizable.
\end{corollary}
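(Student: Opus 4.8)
The plan is to deduce the statement from Lemma~\ref{Le1}, Lemma~\ref{l1} and the characterization in Theorem~\ref{th1}. First I would reduce to the case of a \emph{regular} group: by Proposition~\ref{Po1} it suffices to prove the corollary for $T_2(G)$, and then, applying Proposition~\ref{Po2} to $T_2(G)$, for the semiregularization of $T_2(G)$, which is a regular paratopological group by Theorem~\ref{Th1} (since $T_2(G)$ is Hausdorff). Both reductions preserve the two hypotheses: weak Lindel\"ofness is inherited by continuous surjective images (in particular by $\varphi_{G,2}$ and by the identity map onto the semiregularization), while total $\omega$-narrowness is preserved because a continuous surjective homomorphism $\varphi\colon G\to H$ of paratopological groups remains continuous and surjective as a map $\varphi\colon G^\ast\to H^\ast$ (being a homomorphism, $\varphi$ satisfies $\varphi^{-1}(U^{-1})=(\varphi^{-1}(U))^{-1}$, so preimages of subbasic $\tau_H^\ast$-open sets are $\tau_G^\ast$-open), whence $ib(H^\ast)\le ib(G^\ast)=\omega$. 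Thus I may assume $G$ itself is a regular, totally $\omega$-narrow, weakly Lindel\"of paratopological group.

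Next, $G$ is $\omega$-balanced by \cite[Proposition~3.8]{ST2}, and it has countable Hausdorff number, so Lemma~\ref{Le1} applies: for each continuous $f\colon G\to\mathbb{R}$ there are a ($d$-open) continuous homomorphism $\pi$ of $G$ onto a regular paratopological group $K$ of countable pseudocharacter and a continuous function $h\colon K\to\mathbb{R}$ with $f=h\circ\pi$. By the remark above, $K^\ast$ is a continuous homomorphic image of the $\omega$-narrow topological group $G^\ast$, hence $K^\ast$ is $\omega$-narrow, i.e.\ $K$ is totally $\omega$-narrow; being also regular and of countable pseudocharacter, $K$ is strongly submetrizable by Lemma~\ref{l1}. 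Therefore $G$ satisfies condition~(2) of Theorem~\ref{th1}, which shows that $G$ — and hence the original group — is simply $sm$-factorizable.

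The delicate point, and the one I expect to be the main obstacle, is the verification that the reduced group has countable Hausdorff number, i.e.\ that the hypotheses of Lemma~\ref{Le1} are genuinely met; this is precisely the kind of estimate established in \cite{PZ}. In fact one may shortcut the whole argument by quoting \cite[Theorem~2.9]{PZ} to obtain directly a factorization of each continuous $f\colon G\to\mathbb{R}$ through a continuous homomorphism onto a regular paratopological group of countable pseudocharacter; then the only new ingredient is the observation (via Lemma~\ref{l1}) that such a factor is automatically strongly submetrizable in the present situation, followed by an appeal to Theorem~\ref{th1}. Either way, everything apart from the Hausdorff-number estimate is a routine assembly of results already established.
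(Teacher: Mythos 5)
Your argument is essentially the paper's own proof: reduce to the regular case via Proposition~\ref{Po1}, Proposition~\ref{Po2} and Theorem~\ref{Th1} (noting that weak Lindel\"ofness and total $\omega$-narrowness survive these reductions), then apply Lemma~\ref{Le1} to factor a given continuous real-valued function through a regular paratopological group $K$ of countable pseudocharacter, observe that $K$ is totally $\omega$-narrow and hence strongly submetrizable by Lemma~\ref{l1}, and conclude by Theorem~\ref{th1}. The one step you leave open, the countable Hausdorff number of the reduced group, is not an obstacle at all: the paper simply cites the known fact that every regular totally $\omega$-narrow paratopological group $H$ satisfies $Hs(H)\leq\omega$ \cite[Theorem~2]{Sa}, so no new estimate is needed. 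Be careful with your proposed shortcut, though: the corollary \emph{is} Theorem~2.9 of \cite{PZ} (the point here is to rederive it from the new characterization), so quoting that theorem would be circular; the factorization result you actually want from \cite{PZ} is the one already packaged into Lemma~\ref{Le1} (via \cite[Theorem~2.4]{PZ}), which is exactly how the paper proceeds.
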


\begin{proof}
Clearly weak Lindel\"{o}fness and total $\omega$-narrowness are preserved by continuous maps and continuous homomorphisms, respectively. Theorem~\ref{Th1} implies that $(T_2(G))_{sr}$ is a regular
weakly Lindel\"{o}f and totally $\omega$-narrow paratopological group. According to Propositions~\ref{Po1} and~\ref{Po2}, $G$ is simply $sm$-factorizable if and if so is $(T_2(G))_{sr}$. Hence we can assume that $G$ is regular.

Take any continuous function $f\colon G\to \mathbb{R}$. Since every regular totally $\omega$-narrow paratopological group $H$ is $\omega$-balanced (see \cite[Proposition~3.8]{ST2}) and satisfies $Hs(H)\leq \omega$ (\cite[Theorem~2]{Sa}), we apply Lemma~\ref{Le1} to find a $d$-open continuous homomorphism
$\pi\colon G\rightarrow K$ onto a regular paratopological group $K$ with countable pseudocharacter and
a continuous function $h\colon K\to \mathbb{R}$ such that $f=h\circ \pi$. Clearly, $K$ is totally
$\omega$-narrow, so $K$ is strongly submetrizable by Lemma~\ref{l1}. Therefore, $G$ is simply
$sm$-factorizable by Theorem~\ref{th1}.
\end{proof}

It is well known that a subgroup $H$ of an $\mathbb{R}$-factorizable topological group $G$ is
$z$-embedded in $G$ if and only if $H$ is $\mathbb{R}$-factorizable. Now we consider
the $z$-embedded subgroups of simply $sm$-factorizable (para)topological groups.

\begin{proposition}\label{P}
\emph{A (para)topological group $G$ is simply $sm$-factorizable if and only if for each continuous function
$f\colon G\rightarrow \mathbb{R}^{\omega}$, there exist a continuous homomorphism $\pi\colon G\to H$
onto a (regular) strongly submetrizable (para)topological group $H$ and a continuous function $g\colon
H\to \mathbb{R}^{\omega}$ such that $f=g\circ\pi$.}
\end{proposition}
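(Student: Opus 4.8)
The plan is to deduce this from Theorem~\ref{th1} by a standard back-and-forth argument, with the
$\mathbb{R}^\omega$-valued version obtained from the $\mathbb{R}$-valued version by combining
countably many factorizations into a single one via a diagonal product. The nontrivial direction is to
show that simple $sm$-factorizability implies the factorization property for $\mathbb{R}^\omega$-valued
maps; the converse is immediate on composing with a coordinate projection $p_0\colon\mathbb{R}^\omega\to\mathbb{R}$,
since any continuous $f\colon G\to\mathbb{R}$ can be written as $p_0\circ(f\times 0\times 0\times\cdots)$,
and then Theorem~\ref{th1}(2) applies.

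For the forward direction, first I would take a continuous $f\colon G\to\mathbb{R}^\omega$ and write
$f=\Delta_{n\in\omega}f_n$ where $f_n=p_n\circ f\colon G\to\mathbb{R}$ are its coordinate functions.
Applying Theorem~\ref{th1} to each $f_n$, I obtain a continuous homomorphism $\pi_n\colon G\to H_n$
onto a (regular) strongly submetrizable (para)topological group $H_n$ and a continuous
$g_n\colon H_n\to\mathbb{R}$ with $f_n=g_n\circ\pi_n$. Now form the diagonal homomorphism
$\pi'=\Delta_{n\in\omega}\pi_n\colon G\to\prod_{n\in\omega}H_n$, let $H'=\pi'(G)$ be its image
(a subgroup of a countable product of separable metrizable groups via the submetrizing isomorphisms,
hence itself having a coarser separable metrizable (para)topological group topology), and — exactly as in
the proof of Theorem~\ref{th1} — let $H$ carry the same group structure as $H'$ but equipped with the
quotient topology with respect to the corestriction of $\pi'$, so that $\pi\colon G\to H$ is a continuous
open surjective homomorphism onto a (regular) strongly submetrizable group. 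In the regular case one
passes to the semiregularization $H_{sr}$ as in the proof of (2)$\Rightarrow$(3), using
\cite[Lemma~3.5]{XST} to keep the relevant functions continuous.

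It remains to produce $g\colon H\to\mathbb{R}^\omega$ with $f=g\circ\pi$. Since $\pi$ is continuous,
open, and surjective, it suffices to check that $\pi(x)=\pi(y)$ implies $f(x)=f(y)$: then $g$ is
well-defined by $g(\pi(x))=f(x)$, and continuity of $g$ follows because $\pi$ is a quotient map and
$g\circ\pi=f$ is continuous. But $\pi(x)=\pi(y)$ means $\pi_n(x)=\pi_n(y)$ for every $n$, whence
$f_n(x)=g_n(\pi_n(x))=g_n(\pi_n(y))=f_n(y)$ for every $n$, i.e. $f(x)=f(y)$. This closes the argument.
The only mild subtlety — the ``main obstacle'' such as it is — is verifying that $H$ (or $H_{sr}$ in the
regular case) is genuinely strongly submetrizable: one must note that a countable product of separable
metrizable (para)topological groups is again separable metrizable, that $H'$ inherits a coarser such
topology as a subgroup, and that passing from $H'$ to $H$ with the finer quotient topology only makes the
submetrizing map ``more continuous,'' so strong submetrizability is preserved; this is precisely the
manoeuvre already carried out in the proof of Theorem~\ref{th1}, so no new difficulty arises.
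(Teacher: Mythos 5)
Your argument is correct, and its skeleton (split $f$ into coordinates $f_n=p_n\circ f$, factor each one through Theorem~\ref{th1}, then combine via the diagonal product of the $\pi_n$) is the same as the paper's; the difference lies in how the map $g$ on the image is produced. The paper keeps $K=\pi(G)$ with the subspace topology of $\prod_{n}H_n$ and simply sets $g=g^\ast\res_K$, where $g^\ast=\prod_n g_n\colon \prod_n H_n\to\mathbb{R}^\omega$ is the product of the already available coordinate factorizations; continuity of $g$ is then automatic, no well-definedness check or openness of $\pi$ is needed, and in the regular case $K$ is regular for free as a subspace of a product of regular groups (choosing the $H_n$ regular via Theorem~\ref{th1}(3)). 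You instead discard the $g_n$ at this stage, re-topologize the image with the quotient topology so that $\pi$ becomes open, verify that $\pi(x)=\pi(y)$ forces $f(x)=f(y)$, and recover continuity of $g$ from the quotient property; this works (it is exactly the manoeuvre of the paper's proof of Theorem~\ref{th1}(1)$\Rightarrow$(2), and the quotient group topology is indeed a (para)topological group topology with open quotient map), but it costs you the extra bookkeeping of checking that strong submetrizability survives the finer quotient topology and, in the paratopological regular case, an additional pass to the semiregularization via \cite[Lemma~3.5]{XST}. Your treatment of the easy direction (compose $(f,0,0,\dots)$ with $p_0$ and invoke Theorem~\ref{th1}) matches what the paper leaves implicit. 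In short: both proofs are sound; the paper's use of $g^\ast\res_K$ is the more economical finish, while yours buys an open factorizing homomorphism that the statement does not actually require.
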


\begin{proof}
According to Theorem~\ref{th1} it suffices to prove the necessity. Take any continuous function
$f\colon G\to \mathbb{R}^{\omega}$. For every $i\in\omega$, denote by $p_i$ the projection of $\mathbb{R}^{\omega}$ to the $i$th factor. Then $p_i\circ f\colon G\to \mathbb{R}$ is a continuous real-valued function. Since $G$ is simply $sm$-factorizable, there are a (regular) strongly submetrizable (para)topological group $H_i$, a continuous homomorphism $\pi_i$ of $G$ onto $H_i$ and a continuous function $g_i\colon H_i\to \mathbb{R}$ such that $p_i \circ f=g_i\circ \pi_i$, for each $i\in\omega$. Denote by $\pi$ the diagonal product
of the family $\{\pi_i: i\in\omega\}$. Then $\pi\colon G\to \prod_{i\in\omega}H_i$ is a continuous homomorphism and the image $K=\pi(G)$ is a (regular) strongly submetrizable (para)topological group. For each $i\in\omega$, let  $q_i\colon \prod_{j\in\omega}H_j\to H_i$ be the projection. Then $\pi_i=q_i\circ\pi$. Finally, denote by $g^\ast$ the Cartesian product of the family $\{g_i: i\in\omega\}$. Then $g^\ast\colon\prod_{i\in\omega}H_i\to \mathbb{R}^\omega$ is continuous. Let us verify that $f=g^\ast\circ \pi$. Indeed, for each
$i\in\omega$ and each $x\in G$, we have:
$$
p_i\circ f=g_i\circ \pi_i=g_i\circ q_i\circ \pi=p_i\circ g^\ast\circ \pi.
$$
Hence the function $g=g^\ast \res_K$ satisfies $f=g\circ \pi$.
\end{proof}

\begin{theorem}\label{Th:sm-f}
Let $G$ be a simply $sm$-factorizable (para)topological group. If a subgroup $H$ of $G$ is $z$-embedded
in $G$, then $H$ is simply $sm$-factorizable.
\end{theorem}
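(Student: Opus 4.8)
The plan is to argue directly from the definition of simple $sm$-factorizability rather than through the functional characterization of Theorem~\ref{th1}, because $z$-embeddedness gives control over co-zero (equivalently, zero) sets but not over continuous real-valued functions as such.

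First I would fix an arbitrary co-zero set $U$ in $H$. Complementing the defining property of $z$-embeddedness (every zero-set of $H$ is the trace on $H$ of a zero-set of $G$), I obtain a co-zero set $W$ in $G$ with $U=W\cap H$. Since $G$ is simply $sm$-factorizable, I apply the definition to $W$ and get a continuous homomorphism $\pi\colon G\to M$ onto a separable metrizable (para)topological group $M$ satisfying $\pi^{-1}(\pi(W))=W$.

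Next I would pass to the restriction $\varphi=\pi|_H$. Let $K=\pi(H)$; as a subgroup of $M$ with the subspace topology, $K$ is again a separable metrizable (para)topological group (metrizability and, for metric spaces, separability are hereditary to subspaces, and a subgroup of a (para)topological group is a (para)topological group), and $\varphi$ is a continuous homomorphism of $H$ onto $K$. The one point that needs checking is the equality $U=\varphi^{-1}(\varphi(U))$. Indeed $\varphi^{-1}(\varphi(U))=\pi^{-1}(\pi(U))\cap H$; from $U\subseteq W$ we get $\pi(U)\subseteq\pi(W)$, whence $\pi^{-1}(\pi(U))\subseteq\pi^{-1}(\pi(W))=W$, so $\varphi^{-1}(\varphi(U))\subseteq W\cap H=U$, while the reverse inclusion is automatic. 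Since $U$ was an arbitrary co-zero set of $H$, this shows that $H$ is simply $sm$-factorizable.

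I do not anticipate a serious obstacle: the argument hinges on the observation that the homomorphism $\pi$ produced by $G$ for the enlarged co-zero set $W$ already works for $U$ after restriction to $H$, the decisive ingredients being the inclusion $U\subseteq W$ and the saturation equality $\pi^{-1}(\pi(W))=W$. The only routine items worth recording are that separability together with metrizability is inherited by subspaces (hence by subgroups) and that, in the paratopological case, the subspace topology on $K$ already does the job, with no need to pass to a quotient topology. One could instead phrase the argument through Proposition~\ref{P} using maps into $\mathbb{R}^{\omega}$, but that would meet the same issue that $z$-embeddedness extends co-zero sets, not functions, so the direct route is the natural one.
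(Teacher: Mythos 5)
Your proof is correct, and it takes a genuinely more direct route than the paper's. You argue straight from the definition: extend a single co-zero set $U$ of $H$ to a co-zero set $W$ of $G$ using $z$-embeddedness, take the continuous homomorphism $\pi\colon G\to M$ onto a separable metrizable group with $\pi^{-1}(\pi(W))=W$ given by the simple $sm$-factorizability of $G$, and check that $\varphi=\pi\res_H$, viewed as a homomorphism onto the separable metrizable (para)topological subgroup $K=\pi(H)$ of $M$, saturates $U$: indeed $\varphi^{-1}(\varphi(U))=\pi^{-1}(\pi(U))\cap H\subseteq \pi^{-1}(\pi(W))\cap H=W\cap H=U$, and the reverse inclusion is trivial. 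Nothing more is required, since the definition of simple $sm$-factorizability imposes no openness condition on $\varphi(U)$, and separable metrizability passes to subgroups with the subspace topology. The paper instead proves the theorem through its functional characterization: given $f\in C(H)$, it extends the co-zero sets $f^{-1}(V)$ (for $V$ in a countable co-zero base of $\mathbb{R}$) to $G$ via functions $g_V$, factorizes the diagonal map $G\to\mathbb{R}^{\mathcal V}$ through a strongly submetrizable group using Proposition~\ref{P}, and then must re-equip $\varphi(H)$ with the quotient topology to make the induced function $j$ continuous before invoking Theorem~\ref{th1}. Your argument avoids Proposition~\ref{P} and the quotient-topology step entirely; what the paper's route produces in exchange is a single homomorphism factorizing the whole function $f$ (condition (2) of Theorem~\ref{th1} for $H$), but by Theorem~\ref{th1} this is equivalent, so your shorter argument fully establishes the theorem. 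One small aside: your closing remark that a route through Proposition~\ref{P} would founder because $z$-embeddedness extends sets rather than functions is not quite right---the paper's proof bridges exactly that gap with the auxiliary functions $g_V$---but this does not affect the validity of your proof.
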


\begin{proof}
Consider a continuous function $f\colon H\to \mathbb{R}$. Let $\mathcal {V}$ be a countable base
of $\mathbb{R}$ consisting of co-zero sets and $U_V=f^{-1}(V)$, where $V\in\mathcal{V}$. Then
each element of the family $\mathcal {U}=\{U_V: V\in\mathcal {V}\}$ is co-zero set in $H$. Since
$H$ $z$-embedded in $G$, for each $V\in \mathcal {V}$ there is a continuous function $g_{V}\colon
G\to \mathbb{R}$ such that $g_{V}^{-1}(\mathbb{R}\setminus \{0\})\cap H=U_V$. Denote
by $g$ the diagonal product of the family $\{g_{V}: V\in \mathcal{V}\}$. Then $g\colon G\to
\mathbb{R}^{\mathcal{V}}$ is continuous. Since $\mathcal{V}$ is countable and $G$ is simply
$sm$-factorizable, it follows from Proposition~\ref{P} that one can find a continuous homomorphism
$\varphi$ of $G$ onto a strongly submetrizable (para)topological group $K$ and a continuous function
$h\colon K\to \mathbb{R}^{\mathcal{V}}$ such that $g=h\circ\varphi$. Let us verify the following:\medskip

{\bf Claim.} \emph{If $x,y\in G$ and $\varphi(x)=\varphi(y)$, then $f(x)=f(y)$.}\medskip

If $f(x)\neq f(y)$, then there is $V\in\mathcal {V} $ such that $f(x)\in V$ and $f(y)\notin V$.
Hence our choice of the function $g_V$ implies that $g_V(x)\neq 0$ and $g_V(y)=0$.
Therefore, $g(x)\neq g(y)$ and, hence, $\varphi(x)\neq \varphi(y)$ since $g=h\circ \varphi$.
This proves the claim.

According to the above Claim, there is a function $j\colon \varphi(H)\to \mathbb{R}$ such that
$f=j\circ \varphi \res_H$ ($j$ can fail to be continuous when $\varphi(H)$ is endowed with
the topology inherited from $K$). Denote by $L$ the group $\varphi(H)$ endowed with the quotient
topology with respect to the homomorphism $\varphi$. Then $\varphi \res_H\colon H\to L$ is open,
so the function $j\colon L\to \mathbb{R}$ is continuous. Clearly, the group $L$ is strongly
submetrizable. This completes the proof of the theorem.
\end{proof}

Theorem~\ref{Th:sm-f} makes it natural to ask the following question. Let $H$ be a subgroup
of a topological group $G$. Is $H$ $z$-embedded in $G$ provided both $H$ and $G$ are simply
$sm$-factorizable? It turns out that the answer to the question is \lq\lq{No\rq\rq}.

\begin{example}
\emph{There exists a separable (hence $\omega$-narrow) simply $sm$-factorizable topological
group $G$ which contains a simply $sm$-factorizable subgroup $H$ such that it fails to be
$z$-embedded in $G$.}
\end{example}

\begin{proof}
Let $H$ be a separable topological group which fails to be $\mathbb{R}$-factorizable. One can take
as $H$ the free Abelian topological group over the Sorgenfrey line \cite{RS}. Then $H$ has countable
cellularity, so  \cite[Corollary~5.19]{AT1} implies that $H$ is simply $sm$-factorizable. Every separable
topological group is $\omega$-narrow, so $H$ embeds as a topological subgroup into a product
$G=\prod_{\alpha\in A} G_\alpha$ of second-countable topological groups \cite[Theorem~3.4.23]{AT}.
Since the weight of a separable topological group is at most $\cont=2^\omega$, we can assume
without loss of generality that $|A|\leq\cont$. Then the product group $G$ is separable, while
\cite[Corollary~8.1.15]{AT} implies that $G$ is $\mathbb{R}$-factorizable. However, the subgroup
$H$ of $G$ cannot be $z$-embedded in $G$\,---\,otherwise $H$ would be $\mathbb{R}$-factorizable
by \cite[Theorem~8.2.6]{AT}.
\end{proof}

In Theorem~\ref{Th3} below we give a characterization of simply $sm$-factorizable topological groups
assuming that the groups are $\omega$-narrow. First we recall the notion of \emph{admissible}
subgroup introduced in \cite{Tk89} (see also \cite[Section~5.5]{AT}.

\begin{definition}\label{Def:1}
Let $G$ be a topological group and $\{U_n: n\in \omega\}$ a sequence of open symmetric
neighborhoods of the identity in $G$ such that $U_{n+1}^2\subseteq U_n$, for each
$n\in \omega$. Then $N=\bigcap_{n\in \omega}U_n$ is a subgroup of $G$ which is called
\emph{admissible.}
\end{definition}

It follows from the above definition that every admissible subgroup of a topological group
$G$ is closed and that every neighborhood of the identity in $G$ contains an admissible
subgroup \cite[Lemma~5.5.2]{AT}.

\begin{lemma}\label{L1}
Let $f\colon G\to H$ be a continuous homomorphism of topological groups. If $H$ has
countable pseudocharacter, then $\ker{f}$ is an invariant admissible subgroup of $G$.
\end{lemma}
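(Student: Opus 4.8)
The plan is to exhibit an explicit admissible sequence of neighborhoods witnessing that $\ker f$ is admissible, obtained by pulling back a suitable sequence from $H$. First I would invoke the hypothesis $\psi(H)\le\omega$: fix a countable family $\{W_n:n\in\omega\}$ of open neighborhoods of the identity $e_H$ of $H$ with $\bigcap_{n\in\omega}W_n=\{e_H\}$. Then, using the joint continuity of multiplication and the continuity of inversion in $H$, I would recursively choose open symmetric neighborhoods $V_n$ of $e_H$ such that $V_0\subseteq W_0$ and, for each $n\in\omega$, $V_{n+1}^2\subseteq V_n\cap W_{n+1}$; at each step this is possible because for any open neighborhood $O$ of $e_H$ there is an open symmetric $V$ with $V^2\subseteq O$. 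Since $V_n\subseteq W_n$ for every $n$, we still have $\bigcap_{n\in\omega}V_n=\{e_H\}$.

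Next I would pull this sequence back along $f$. Put $U_n=f^{-1}(V_n)$ for each $n\in\omega$. As $f$ is continuous, each $U_n$ is an open neighborhood of the identity $e_G$ of $G$; as $f$ is a homomorphism and $V_n$ is symmetric, $x\in U_n$ iff $f(x)^{-1}=f(x^{-1})\in V_n$ iff $x^{-1}\in U_n$, so each $U_n$ is symmetric; and if $a,b\in U_{n+1}$ then $f(ab)=f(a)f(b)\in V_{n+1}^2\subseteq V_n$, whence $U_{n+1}^2\subseteq U_n$. Thus $\{U_n:n\in\omega\}$ is a sequence of the type required in Definition~\ref{Def:1}, and the subgroup it determines is
$$
\bigcap_{n\in\omega}U_n=\bigcap_{n\in\omega}f^{-1}(V_n)=f^{-1}\Bigl(\bigcap_{n\in\omega}V_n\Bigr)=f^{-1}(e_H)=\ker f .
$$
Hence $\ker f$ is an admissible subgroup of $G$.

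Finally, $\ker f$ is the kernel of a group homomorphism, hence a normal (invariant) subgroup of $G$, which completes the proof. I do not expect a genuine obstacle here: the only point requiring a little care is arranging the recursion so that symmetry, the condition $V_{n+1}^2\subseteq V_n$, and $V_n\subseteq W_n$ all hold simultaneously, and this is routine given the continuity of the operations in $H$; note also that no separation axiom on $G$ is needed.
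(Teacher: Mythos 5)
Your proof is correct and follows essentially the same route as the paper: pull back along $f$ a sequence of open symmetric neighborhoods $V_n$ of $e_H$ with $V_{n+1}^2\subseteq V_n$ and $\bigcap_n V_n=\{e_H\}$ (whose existence the paper simply asserts from $\psi(H)\le\omega$, while you spell out the recursion), and observe that the preimages form an admissible sequence with intersection $\ker f$, which is normal as a kernel.
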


\begin{proof}
Clearly, $N=\ker{f}$ is an invariant subgroup of $G$, so it suffices to show that $N$ is
admissible. Since $H$ is a topological group with countable pseudocharacter, one can find
a sequence $\{U_n: n\in \omega\}$ of open symmetric neighborhoods of the identity $e$ in $H$
such that $U_{n+1}^2\subseteq U_n$, for each $n\in \omega$ and $\{e\}=\bigcap_{n\in \omega}U_n$.
Let $V_n=f^{-1}(U_n)$, $n\in\omega$. Then $\{V_n: n\in \omega\}$ is a sequence of open symmetric neighborhoods of the identity in $G$ satisfying $V_{n+1}^2\subseteq V_n$, for each $n\in \omega$
and $N=\bigcap_{n\in \omega} V_n$. This implies that $N$ is an admissible subgroup of $G$.
\end{proof}

\begin{theorem}\label{Th3}
The implication {\rm (a)}\,$\Rightarrow$\,{\rm (b)} is valid for every topological group $G$, where
\begin{enumerate}
\item[{\rm (a)}] $G$ is simply $sm$-factorizable;
\item[{\rm (b)}] for every continuous function $f\colon G\to \mathbb{R}$, there exists an invariant
admissible subgroup $N$ of $G$ such that $f$ is constant on $gN$, for each $g\in G$.
\end{enumerate}
Furthermore, if $G$ is $\omega$-narrow, then {\rm (a)} and {\rm (b)} are equivalent.
\end{theorem}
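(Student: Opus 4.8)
The plan is to prove the implication $(a)\Rightarrow(b)$ first, then close the loop by proving $(b)\Rightarrow(a)$ under the additional $\omega$-narrowness hypothesis. For $(a)\Rightarrow(b)$: given a continuous $f\colon G\to\mathbb{R}$, I would apply Theorem~\ref{th1} to obtain a continuous homomorphism $\pi\colon G\to H$ onto a strongly submetrizable (para)topological group $H$ together with a continuous $g\colon H\to\mathbb{R}$ such that $f=g\circ\pi$. Since $H$ is strongly submetrizable it has countable pseudocharacter, so Lemma~\ref{L1} guarantees that $N=\ker\pi$ is an invariant admissible subgroup of $G$. It remains to observe that $f$ is constant on each coset $gN$: if $x\in gN$ then $\pi(x)=\pi(g)$, hence $f(x)=g(\pi(x))=g(\pi(g))=f(g)$. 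This handles the first half with essentially no obstacle, as it is a direct assembly of the two cited lemmas.

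For the converse under $\omega$-narrowness, suppose $G$ is $\omega$-narrow and satisfies $(b)$; I want to verify condition $(2)$ of Theorem~\ref{th1}. Take a continuous $f\colon G\to\mathbb{R}$ and let $N=N_f$ be the invariant admissible subgroup on whose cosets $f$ is constant. The natural move is to pass to the quotient group $H=G/N$ with the quotient topology and the canonical projection $\pi\colon G\to H$, which is an open continuous homomorphism. Since $f$ is constant on cosets of $N$, it factors as $f=g\circ\pi$ for a unique map $g\colon H\to\mathbb{R}$, and openness of $\pi$ forces $g$ to be continuous. What I then need is that $H=G/N$ is \emph{strongly submetrizable}. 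For this I would invoke Lemma~\ref{l1}: a quotient of an $\omega$-narrow group is $\omega$-narrow, so $H$ is $\omega$-narrow; and since $N$ is admissible, $N=\bigcap_{n\in\omega}V_n$ for a suitable sequence $\{V_n\}$ of open symmetric neighborhoods of the identity, whence the sets $\pi(V_n)$ (open, by openness of $\pi$) witness that $\{e_H\}$ is a $G_\delta$ in $H$; thus $\psi(H)\le\omega$ and $H$ is Hausdorff. For topological groups, Lemma~\ref{l1} now gives that $H$ is strongly submetrizable, and Theorem~\ref{th1}(2) yields simple $sm$-factorizability of $G$.

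The main obstacle is the bookkeeping showing that $\psi(G/N)\le\omega$ and that $G/N$ is Hausdorff — i.e. verifying that the images $\pi(V_n)$ of the defining neighborhoods of the admissible subgroup $N$ really do intersect down to $\{e_H\}$ in the quotient. Concretely, one must check that if $\pi(x)\in\bigcap_n\pi(V_n)$ then $x\in\bigcap_n V_nN$; using $V_{n+1}^2\subseteq V_n$ and $N\subseteq V_{n+1}$ one gets $x\in\bigcap_n V_{n+1}V_{n+1}\subseteq\bigcap_n V_n=N$, so $\pi(x)=e_H$. This is the only place where the admissibility (the $V_{n+1}^2\subseteq V_n$ chain condition) is genuinely used rather than mere closedness of $N$, and it is what upgrades "$N$ is $G_\delta$-like" to "$G/N$ has countable pseudocharacter." Everything else — openness of the quotient map, the factorization of $f$, continuity of $g$, and the final appeal to Lemma~\ref{l1} and Theorem~\ref{th1} — is routine. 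I would remark that the argument uses the topological-group form of Lemma~\ref{l1}, which is why the equivalence is stated only for topological (not paratopological) groups.
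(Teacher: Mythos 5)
Your proposal is correct and follows essentially the same route as the paper: (a)\,$\Rightarrow$\,(b) by combining Theorem~\ref{th1} with Lemma~\ref{L1} applied to $\ker\pi$, and (b)\,$\Rightarrow$\,(a) by passing to the open quotient $G/N$, factoring $f$ through it, and invoking Lemma~\ref{l1}. The only cosmetic difference is that where the paper cites \cite[Lemma~2.3.6]{T1} for the countable pseudocharacter of $G/N$, you verify it directly via the chain condition $V_{n+1}^2\subseteq V_n$, which is a sound substitute.
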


\begin{proof}
Let us show that {\rm (a)}\,$\Rightarrow$\,{\rm (b)}. Assume that $G$ is a simply $sm$-factorizable
topological group. Then, according to Theorem~\ref{th1}, one can find a continuous homomorphism
$\pi$ of $G$ onto a strongly submetrizable topological group $H$ and a continuous function $g\colon H\to \mathbb{R}$ such that $f=g\circ \pi$. Clearly, $H$ has countable pseudocharacter, so by Lemma~\ref{L1},
the kernel $N=\pi^{-1}(e)$ of $\pi$ is an invariant admissible subgroup of $G$. Since $f=g\circ \pi$, one
can easily see that $f$ is constant on $xN$, for each $x\in G$. This implies (b).

Assume that $G$ is an $\omega$-narrow group satisfying (b), and let $f\colon G\to \mathbb{R}$ be a
continuous function. Then there is an invariant admissible subgroup $N$ of $G$ such that $f$ is constant
on $xN$ for each $x\in G$. Let $G/N$ be the quotient topological group of $G$ and $\pi\colon G\to G/N$ be
the quotient homomorphism. Since $G$ is $\omega$-narrow and so is every quotient group of $G$,
$G/N$ is an $\omega$-narrow group of countable pseudocharacter (see \cite[Lemma~2.3.6]{T1}).
By Lemma~\ref{l1}, $G/N$ is a strongly submetrizable topological group. Observing that $f$ is constant on $xN$ for each $x\in G$ and $\pi\colon G\to G/N$ is open, one can find a continuous function $h\colon
G/N\to \mathbb{R}$ such that $f=h\circ \pi$. From Theorem~\ref{th1} it follows that $G$ is simply $sm$-factorizable.
\end{proof}

We have just established the equivalence of items (a) and (b) in Theorem~\ref{Th3} under the additional
assumption of the $\omega$-narrowness of $G$. Since every discrete abelian group of cardinality
$2^\omega$ is simply $sm$-factorizable \cite[Proposition~5.15]{AT1}, we see that simply $sm$-factorizable topological groups need not be $\omega$-narrow. Therefore, it is natural to ask whether every simply $sm$-factorizable topological group is $\omega$-balanced. In the next example we answer this question in the negative.

\begin{example}
\emph{Let $H=GL(\mathbb{R},2)$ be the group of $2\times 2$ invertible matrices with
real entries. Denote by $G$ the group $H^\omega$ endowed with the box topology.
Then $G$ is a strongly submetrizable (hence simply $sm$-factorizable) group of
countable pseudocharacter which fails to be $\omega$-balanced.}
\end{example}

\begin{proof}
Indeed, denote by $G_\ast$ the group $H^\omega$ with the usual product topology. Then the identity
mapping of $G$ onto $G_\ast$ is a continuous isomorphism onto a separable metrizable topological
group, so $G$ is strongly submetrizable, hence simply $sm$-factorizable (Proposition~\ref{le1}). However,
 $G$ is not $\omega$-balanced. To show this we slightly modify the argument from \cite[Example~2]{Pes}.
 It is well known that the group $H$ contains two sequences $\{a_n: n\in\omega\}$ and $\{b_n: n\in\omega\}$ and an element $z_0\neq e$ such that $a_nb_n\to e$ and $b_na_n\to z_0$ for $n\to\infty$, where $e$ is
 the identity element of $H$ \cite[4.24]{HR}. Choose an open neighborhood $U_0$ of $e$ in $H$ such that $z_0\notin \overline{U_0}$. It is easy to see that for every open neighborhood $V$ of $e$ in $H$, there exists $k\in\omega$ such that $b_k V b_k^{-1}\setminus U_0\neq\emptyset$\,---\,otherwise $b_k a_k=b_k(a_k b_k) b_k^{-1}\in b_k V b_k^{-1}\subset U_0$ for all sufficiently big $k\in\omega$, which contradicts our
 choice of the sequences $\{a_n: n\in\omega\}$, $\{b_n: n\in\omega\}$ and of the set $U_0$.

Clearly $U=U_0^\omega$ is an open neighborhood of the identity in $G$. Suppose for a contradiction that
$G$ is $\omega$-balanced. Then there exists a sequence $\{V_n: n\in\omega\}$ of open neighborhoods
of the identity in $G$ subordinated to $U$. One can assume without loss of generality that every $V_n$
has the form $\prod_{k\in\omega} V_{n,k}$, where $V_{n,k}$ is an open neighborhood of $e$ in $H$ for
each $k\in\omega$. We have just shown that for every $n\in\omega$, there exists $k_n\in\omega$ such
that $b_{k_n}V_{n,n}b_{k_n}^{-1}\setminus U_0\neq\emptyset$. Let $x=(x_n)_{n\in\omega}$ be the
element of $G$ defined by $x_n=b_{k_n}$ for each $n\in\omega$. Then $xV_nx^{-1}\setminus U\neq\emptyset$ for each $n\in\omega$ since the projections of the sets $xV_nx^{-1}$ and $U$ to
the $n$th factor are $b_{k_n}V_{n,n}b_{k_n}^{-1}$ and $U_0$, respectively. This contradicts our
choice of the sequence $\{V_n: n\in\omega\}$. Therefore, the group $G$ is not $\omega$-balanced.
\end{proof}

The following result partially answers Problem~\ref{Q1.2}.

\begin{corollary}\label{Cor:Adm}
Let $\pi\colon G\to H$ be a continuous homomorphism of topological groups such that for
each invariant admissible subgroup $K\subseteq G$, the image $\pi(K)$ contains an invariant
admissible subgroup of $H$. If $G$ is simply $sm$-factorizable and $H$ is $\omega$-narrow,
then $H$ is simply $sm$-factorizable.
\end{corollary}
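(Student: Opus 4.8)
The plan is to use the characterization of simple $sm$-factorizability given by the equivalence (a)$\Leftrightarrow$(b) in Theorem~\ref{Th3}, since both $G$ and $H$ are in the favorable situation: $G$ is simply $sm$-factorizable (so (b) holds for $G$) and $H$ is $\omega$-narrow (so it suffices to verify (b) for $H$ in order to conclude that $H$ is simply $sm$-factorizable). Thus I would start by taking an arbitrary continuous function $f\colon H\to\mathbb{R}$ and try to produce an invariant admissible subgroup $N$ of $H$ on which (together with all its left translates) $f$ is constant.

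First I would pull $f$ back along $\pi$. The composition $f\circ\pi\colon G\to\mathbb{R}$ is a continuous real-valued function on the simply $sm$-factorizable group $G$, so by Theorem~\ref{Th3}(b) there is an invariant admissible subgroup $K$ of $G$ such that $f\circ\pi$ is constant on every coset $gK$, $g\in G$. By the hypothesis on $\pi$, the image $\pi(K)$ contains an invariant admissible subgroup $N$ of $H$. I claim this $N$ works. Indeed, it is enough to check that $f$ is constant on each coset $hN$ with $h\in H$. Fix $h\in H$ and two elements $h n_1, h n_2\in hN$ with $n_1,n_2\in N$. Since $\pi$ is onto, pick $g\in G$ with $\pi(g)=h$; since $N\subseteq\pi(K)$, pick $k_1,k_2\in K$ with $\pi(k_i)=n_i$. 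Then $\pi(gk_i)=h n_i$, and $gk_1, gk_2$ both lie in the coset $gK$, on which $f\circ\pi$ is constant; hence $f(hn_1)=f(\pi(gk_1))=f(\pi(gk_2))=f(hn_2)$. So $f$ is constant on $hN$, as required. Applying the $\omega$-narrow direction of Theorem~\ref{Th3} to $H$ then yields that $H$ is simply $sm$-factorizable.

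The only genuinely non-routine point is making sure the hypothesis ``$\pi(K)$ contains an invariant admissible subgroup of $H$'' is applied to the right $K$, namely to the admissible subgroup produced by Theorem~\ref{Th3}(b) for the function $f\circ\pi$; everything else is a diagram chase using surjectivity of $\pi$ and the coset-constancy of $f\circ\pi$. One should double-check that invariance of $K$ is indeed what the hypothesis of the corollary refers to (it is), and that no further properties of $N$ beyond being an invariant admissible subgroup of $H$ are needed in the second half of Theorem~\ref{Th3} (they are not). I expect the write-up to be short, with the coset computation above being the heart of the argument.
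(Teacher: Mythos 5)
Your proposal is correct and follows essentially the same route as the paper's own proof: pull $f$ back to $f\circ\pi$, apply the implication (a)\,$\Rightarrow$\,(b) of Theorem~\ref{Th3} to the simply $sm$-factorizable group $G$, use the hypothesis to find an invariant admissible subgroup of $H$ inside the image of the resulting subgroup, verify coset-constancy of $f$, and invoke the $\omega$-narrow direction of Theorem~\ref{Th3} for $H$. Your explicit use of surjectivity of $\pi$ in the coset computation matches the paper's (implicit) reading of the corollary, which is intended for homomorphisms onto $H$, so there is no real divergence.
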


\begin{proof}
Let $f\colon H\to \mathbb{R}$ be any continuous function. Since $G$ is simply $sm$-factorizable, it
follows from Theorem~\ref{Th3} that there is an invariant admissible subgroup $N$ of $G$ such that
for each $x\in G$, $f\circ\pi$ is constant on $xN$. According to our assumption $\pi(N)$ contains an
invariant admissible subgroup $K$ of $H$. Since $\pi$ is a homomorphism and for each $x\in G$,
$f\circ\pi$ is constant on $xN$, one can easily verify that for each $y\in H$, $f$ is constant on $yK$.
Observing that $H$ is $\omega$-narrow, from Theorem~\ref{Th3} it follows that $H$ is simply
$sm$-factorizable.
\end{proof}

\begin{remark}
The condition on the homomorphism $\pi$ in Corollary~\ref{Cor:Adm} is quite strong. It can easily
fail, even if the homomorphism $\pi$ is open. Indeed, according to \cite[Theorem~7.6.18]{AT}, every
Hausdorff topological group $H$ is a quotient group of a topological group $G$ with countable
pseudocharacter. So we can take $H$ to be a Hausdorff topological group with $\psi(H)>\omega$
and find an open continuous surjective homomorphism $\pi\colon G\to H$, where $G$ is a topological
group of countable pseudocharacter. Then $K=\{e\}$ is an invariant admissible subgroup of $G$,
where $e$ is the identity element of $G$. Clearly $\pi(K)=\{e_H\}$ does not contain any admissible
subgroup of $H$. It also follows from Proposition~\ref{Prop:New} below that $G$ cannot be simply
$sm$-factorizable if $|H|>\cont$.
\end{remark}

As we mentioned after Definition~\ref{Def:1}, every neighborhood of the identity in a topological
group contains an admissible subgroup \cite[Lemma~5.5.2]{AT}. This conclusion can be strengthened
for $\omega$-balanced topological groups:

\begin{lemma}\label{L2.17}
Every neighborhood of the identity $e$ in an $\omega$-balanced topological group $G$ contains
an invariant admissible subgroup.
\end{lemma}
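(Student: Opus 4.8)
The plan is to combine the standard construction of an admissible subgroup (Definition~\ref{Def:1}) with the $\omega$-balanced condition, choosing the neighborhoods in the defining sequence so carefully that the resulting subgroup is automatically invariant. Start with an arbitrary neighborhood $U$ of $e$ in $G$; we may assume $U$ is open and symmetric. We want to build a sequence $\{U_n: n\in\omega\}$ of open symmetric neighborhoods of $e$ with $U_0\subseteq U$, $U_{n+1}^2\subseteq U_n$ for all $n$, and with the extra feature that $N=\bigcap_{n\in\omega}U_n$ is invariant, i.e. $gNg^{-1}\subseteq N$ for every $g\in G$. The naive choice of the $U_n$ need not yield an invariant intersection, so the $\omega$-balanced hypothesis has to be injected at each stage.

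The key step is the recursive construction. Having chosen $U_n$, first use $\omega$-balancedness to pick a countable family $\{W_m^{(n)}: m\in\omega\}$ of open neighborhoods of $e$ subordinated to $U_n$, so that for every $g\in G$ some $W_m^{(n)}$ satisfies $gW_m^{(n)}g^{-1}\subseteq U_n$. Then choose $U_{n+1}$ to be an open symmetric neighborhood of $e$ contained in $\bigcap_{k\le n+1}\bigcap_{m\le n+1} W_m^{(k)}$ (intersected also with $U_n$) and small enough that $U_{n+1}^2\subseteq U_n$; this is possible because the intersection is a finite intersection of neighborhoods of $e$ and $G$ is a topological group. This bookkeeping guarantees that for every pair $(k,m)$, eventually $U_n\subseteq W_m^{(k)}$, hence $N=\bigcap_n U_n\subseteq W_m^{(k)}$ for \emph{all} $k,m$. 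Now given any $g\in G$ and any $n$, pick $m$ with $gW_m^{(n)}g^{-1}\subseteq U_n$; since $N\subseteq W_m^{(n)}$ we get $gNg^{-1}\subseteq U_n$. As $n$ was arbitrary, $gNg^{-1}\subseteq\bigcap_n U_n=N$. Replacing $g$ by $g^{-1}$ gives the reverse inclusion, so $gNg^{-1}=N$ and $N$ is invariant. By construction $N\subseteq U_0\subseteq U$, and $N$ is admissible by Definition~\ref{Def:1}.

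The main obstacle is purely organizational: arranging the diagonal enumeration so that each subordinated family $\{W_m^{(k)}\}$ is ``caught'' cofinally by the tail of the sequence $\{U_n\}$ while simultaneously maintaining $U_{n+1}^2\subseteq U_n$ and symmetry. This is a routine interleaving argument — at stage $n+1$ only finitely many constraints from the previously produced families are active, so a single small enough symmetric neighborhood of $e$ satisfies all of them together with the squaring condition. I expect no genuine difficulty here; the proof is a mild refinement of the proof of \cite[Lemma~5.5.2]{AT} with the extra ingredient of $\omega$-balancedness used to control conjugates.
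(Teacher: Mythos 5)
Your argument is correct, but it takes a different route from the paper. You prove the lemma by hand: you interleave the squaring condition $U_{n+1}^2\subseteq U_n$ of Definition~\ref{Def:1} with the $\omega$-balanced condition, at each stage fixing a countable family subordinated to $U_n$ and then choosing $U_{n+1}$ inside the finitely many subordinated neighborhoods produced so far, so that $N=\bigcap_n U_n$ lies in \emph{every} $W_m^{(k)}$ and hence $gNg^{-1}\subseteq U_n$ for all $g$ and $n$; applying this to $g^{-1}$ as well gives invariance. This diagonal bookkeeping works (your index bound ``$k\le n+1$'' should read ``$k\le n$'', since the family subordinated to $U_{n+1}$ is not yet defined at that stage, but this is a harmless slip and does not affect the cofinality argument), and it uses nothing beyond the definitions, making it a self-contained refinement of \cite[Lemma~5.5.2]{AT}. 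The paper instead quotes the structural theorem that every $\omega$-balanced topological group embeds in a product of first-countable groups \cite[Theorem~5.1.9]{AT}: given $U$, it takes a continuous homomorphism $p$ of $G$ onto a first-countable group $H$ with $p^{-1}(V)\subseteq U$ for some neighborhood $V$ of $e_H$, notes that $K=\overline{\{e_H\}}$ is an invariant admissible subgroup of $H$ contained in $V$, and pulls it back to the invariant admissible subgroup $p^{-1}(K)\subseteq U$. The paper's proof is shorter but leans on a nontrivial embedding theorem; yours is longer but elementary, and it makes explicit how $\omega$-balancedness controls conjugates directly, which is arguably more transparent for a reader who does not have \cite[Theorem~5.1.9]{AT} at hand.
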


\begin{proof}
Every $\omega$-balanced topological group is a subgroup of a topological product of first countable
topological groups \cite[Theorem~5.1.9]{AT}, so for each open neighborhood $U$ of $e$ in $G$, one
can find a continuous homomorphism $p$ on $G$ onto a first countable topological group $H$ and
an open neighborhood $V$ of the identity $e_H$ in $H$ satisfying $p^{-1}(V)\subseteq U$. Clearly $K=\overline{\{e_H\}}$ is an invariant admissible subgroup of $H$ satisfying $K\subset V$, so
$p^{-1}(K)$ is an invariant admissible subgroup of $G$ contained in $U$.
\end{proof}

We recall that $X$ is a \emph{$P$-space} if every $G_\delta$-set in $X$ is open.  Similarly,
a (para)topo\-log\-i\-cal group $G$ is said to be a \emph{$P$-group} if the underlying space of $G$
is a $P$-space. The following result gives a partial answer to Problem~\ref{Q1.3}.

\begin{corollary}
If an $\omega$-narrow topological group $H$ is a quotient group of a simply $sm$-factorizable
$P$-group, then $H$ is $\mathbb{R}$-factorizable.
\end{corollary}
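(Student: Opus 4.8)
The plan is to reduce the statement to Theorem~\ref{Th3} by exploiting the $P$-space hypothesis on the source group. Let $\pi\colon G\to H$ be a quotient homomorphism, where $G$ is a simply $sm$-factorizable $P$-group and $H$ is $\omega$-narrow. Since $\mathbb{R}$-factorizability is equivalent to $sm$-factorizability (item~(1) in the Introduction), it suffices to prove that $H$ is $sm$-factorizable, i.e. $\mathbb{R}$-factorizable, and for this I would first establish that $H$ is simply $sm$-factorizable and then upgrade \lq\lq simply\rq\rq{} to full $sm$-factorizability using that $\omega$-narrow groups of countable pseudocharacter are particularly well-behaved.

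First I would show that $H$ is simply $sm$-factorizable. The key observation is that in a $P$-group, every admissible subgroup $N=\bigcap_{n\in\omega}U_n$ is \emph{open}: a countable intersection of open sets is open in a $P$-space. Now take any continuous $f\colon H\to\mathbb{R}$; then $f\circ\pi$ is continuous on $G$, so by Theorem~\ref{Th3} (the implication (a)$\Rightarrow$(b), which needs no narrowness hypothesis) there is an invariant admissible subgroup $N$ of $G$ on which $f\circ\pi$ is constant on each coset $gN$. Since $G$ is a $P$-group, $N$ is open, hence $\pi(N)$ is an open (in particular non-trivial) subgroup of $H$. Here is where I would check the crucial point: $f$ is constant on each coset $y\pi(N)$ in $H$. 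Indeed, if $y=\pi(g)$ and $n\in N$, then $f(yπ(n))=f(\pi(gn))=(f\circ\pi)(gn)=(f\circ\pi)(g)=f(y)$, using that $\pi$ is surjective and $f\circ\pi$ is $N$-invariant on the left. So $f$ factors through the quotient $H\to H/\pi(N)$, and $H/\pi(N)$ is an $\omega$-narrow discrete group (being a quotient of the open subgroup situation) — actually it is $\omega$-narrow of countable pseudocharacter, being a quotient of $H$ by an open subgroup. By Lemma~\ref{l1}, $H/\pi(N)$ is strongly submetrizable, so by Theorem~\ref{th1} applied to the continuous homomorphism $H\to H/\pi(N)$ and the induced continuous $g\colon H/\pi(N)\to\mathbb{R}$ with $f=g\circ(\text{quotient})$, we get that $H$ is simply $sm$-factorizable.

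To conclude $\mathbb{R}$-factorizability rather than just simple $sm$-factorizability, I would argue that the subgroup $\pi(N)$ produced above is in fact a quotient by an open subgroup, so the target group $H/\pi(N)$ is not only strongly submetrizable but second-countable when $H$ is $\omega$-narrow and the pseudocharacter is countable — hence the factorization map $\pi_N\colon H\to H/\pi(N)$ is onto a genuine separable metrizable topological group, which is exactly what $sm$-factorizability (equivalently $\mathbb{R}$-factorizability) of $H$ demands for the co-zero set $f^{-1}(\mathbb{R}\setminus\{0\})$. The main obstacle I anticipate is verifying that the open subgroup $\pi(N)$ of $H$ yields a quotient that is honestly separable metrizable (not merely admitting a coarser such topology): for an $\omega$-narrow Hausdorff topological group of countable pseudocharacter this follows from \cite[Corollary~3.4.25]{AT}, but one must be slightly careful because $H$ itself need not be Hausdorff — however, passing to the co-zero set factorization of $f$ lets one replace $H$ by its image in a Hausdorff quotient without losing anything, and the $P$-space argument for openness of $N$ is preserved under the canonical homomorphisms involved.
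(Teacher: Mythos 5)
Your proposal is correct, and after the common first half it takes a genuinely different route from the paper. Both arguments start identically: apply the implication (a)$\Rightarrow$(b) of Theorem~\ref{Th3} to $f\circ\pi$ on $G$, observe that the resulting invariant admissible subgroup $N$ is open because $G$ is a $P$-space, and push it forward along the open quotient homomorphism $\pi$ to obtain an open invariant subgroup $\pi(N)$ of $H$ on whose cosets $f$ is constant. The paper then stays within the admissible-subgroup framework: $H$ is $\omega$-narrow, hence $\omega$-balanced, so Lemma~\ref{L2.17} yields an invariant admissible subgroup $K$ of $H$ contained in $\pi(N)$; the implication (b)$\Rightarrow$(a) of Theorem~\ref{Th3} gives simple $sm$-factorizability of $H$; and the final upgrade is imported from outside: $H$ is a $P$-group by \cite[Lemma~4.4.1\,c)]{AT}, and every $\omega$-narrow simply $sm$-factorizable $P$-group is $\mathbb{R}$-factorizable by \cite[Proposition~5.23]{AT1}. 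You instead factor $f$ directly through the quotient $H/\pi(N)$, which is discrete (the subgroup is open) and $\omega$-narrow, hence countable and genuinely separable metrizable, and the induced function on it is automatically continuous; this meets the definition of $\mathbb{R}$-factorizability at once, so Lemma~\ref{L2.17}, the (b)$\Rightarrow$(a) half of Theorem~\ref{Th3}, and both external citations become unnecessary. Your route is therefore more elementary and self-contained, while the paper's displays the admissible-subgroup criterion of Theorem~\ref{Th3} in action and avoids any analysis of the quotient topology. One clean-up: your closing concerns about the Hausdorffness of $H$ and the appeal to \cite[Corollary~3.4.25]{AT} are superfluous---$H/\pi(N)$ is discrete regardless of the separation properties of $H$, hence Hausdorff and metrizable for free, and $\omega$-narrowness alone makes it countable, hence separable.
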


\begin{proof}
Let $\pi\colon G\to H$ be a quotient homomorphism and $G$ be a simply $sm$-factorizable $P$-group.
Then $H$ is a $P$-group by \cite[Lemma~4.4.1\,c)]{AT}. Since every $\omega$-narrow simply
$sm$-factorizable $P$-group is $\mathbb{R}$-factorizable \cite[Proposition~5.23]{AT1}, it suffices
to show that $H$ is simply $sm$-factorizable. Let $f$ be a continuous real-valued function on $H$.
Since $G$ is simply $sm$-factorizable, Theorem~\ref{Th3} implies that there is an invariant admissible
subgroup $N$ of $G$ such that for each $x\in G$, $f\circ\pi$ is constant on $xN$. Therefore, $f$ is
constant on $y\pi(N)$ for each $y\in H$. Observing that $G$ is a $P$-space and $\pi$ is open, we
see that $\pi(N)$ is an open neighborhood of the identity in $H$. Clearly $H$ is $\omega$-balanced,
so Lemma~\ref{L2.17} implies that $\pi(N)$ contains an invariant admissible subgroup $K$ of $H$.
It is clear that $f\circ\pi$ is constant on $yK$ for each $y\in H$, so $H$ is simply $sm$-factorizable
by Theorem~\ref{Th3}, because $H$ is $\omega$-narrow.
\end{proof}

\begin{proposition}\label{Prop:New}
\emph{Every regular simply $sm$-factorizable (para)topological group $G$ of countable
pseudocharacter admits a continuous isomorphic bijection onto a separable metrizable
(para)topological group. Therefore, $G$ is strongly submetrizable and $|G|\leq \cont$.}
\end{proposition}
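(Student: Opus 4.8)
The plan is to exploit the characterization in Theorem~\ref{th1} together with the ``diagonal'' technique of Proposition~\ref{P}, but now applied to a \emph{single} function that separates points of $G$. Since $G$ has countable pseudocharacter and is (say) $T_1$, there is a countable family $\{W_n : n\in\omega\}$ of open neighborhoods of the identity $e$ with $\bigcap_{n\in\omega} W_n = \{e\}$, and in the regular case we may even take $\overline{W_n}$ decreasing with $\bigcap_n \overline{W_n}=\{e\}$. From each $W_n$ I would produce a continuous function $f_n\colon G\to\mathbb{R}$ with $f_n(e)=0$ and $f_n(G\setminus W_n)\subseteq\{1\}$ (this uses complete regularity of the underlying space, which follows from regularity of the group topology, or one may use the structure theory of (para)topological groups to get a continuous pseudometric $\rho_n$ with $\{x:\rho_n(e,x)<1\}\subseteq W_n$ and set $f_n(x)=\min\{1,\rho_n(e,x)\}$). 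Let $f=\Delta_{n\in\omega} f_n\colon G\to\mathbb{R}^\omega$. Then the zero set $f^{-1}(\mathbf{0})=\bigcap_n f_n^{-1}(0)$ is contained in $\bigcap_n W_n=\{e\}$, so $f$ \emph{separates the identity from every other point}; since $G$ is a (para)topological group, a routine argument (using that $f$ is built from left-invariant gauges, or arguing pointwise with translates) upgrades this to: $f(x)=f(y)$ implies $x=y$, i.e.\ $f$ is injective.

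Next I would apply Proposition~\ref{P}: because $G$ is simply $sm$-factorizable and $f$ maps into $\mathbb{R}^\omega$, there are a continuous homomorphism $\pi\colon G\to H$ onto a (regular) strongly submetrizable (para)topological group $H$ and a continuous $g\colon H\to\mathbb{R}^\omega$ with $f=g\circ\pi$. If $\pi(x)=\pi(y)$ then $f(x)=g(\pi(x))=g(\pi(y))=f(y)$, whence $x=y$ by injectivity of $f$. Thus $\pi$ is itself a continuous isomorphic bijection of $G$ onto the strongly submetrizable group $H$. Finally, $H$ being strongly submetrizable, there is a continuous one-to-one homomorphism $i\colon H\to H'$ onto a separable metrizable (para)topological group $H'$; composing, $i\circ\pi\colon G\to H'$ is the desired continuous isomorphic bijection onto a separable metrizable (para)topological group. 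Hence $G$ is strongly submetrizable, and since a separable metrizable space has cardinality at most $\cont$ while $i\circ\pi$ is injective, $|G|\le|H'|\le\cont$.

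The main obstacle is the very first step: fabricating, from the countably many neighborhoods $W_n$ witnessing $\psi(G)\le\omega$, enough continuous real-valued functions on $G$ to separate the identity from all other points, and then promoting ``separates $e$ from the rest'' to genuine injectivity of the diagonal map $f$. For topological groups this is standard — one uses the Birkhoff--Kakutani construction to get a continuous left-invariant (pseudo)metric from a chain of symmetric neighborhoods refining $W_n$, and left-invariance immediately gives injectivity of $f$ on the whole group once it separates $e$. For \emph{paratopological} groups the inversion need not be continuous, so one must work with the associated topological group or invoke the regularity hypothesis to obtain continuous (quasi-)pseudometrics subordinated to the $W_n$; here I would lean on the $T_3$/regular structure theory already cited in the paper (e.g.\ the submetrizability results of \cite{PZ}) to manufacture the required $f_n$, and then the reduction to the topological group $G^\ast$ or a direct left-invariance argument finishes the injectivity. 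Everything after producing that single injective $f\colon G\to\mathbb{R}^\omega$ is formal, driven entirely by Proposition~\ref{P} and the definition of strong submetrizability.
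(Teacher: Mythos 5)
Your overall skeleton (use complete regularity of the regular (para)topological group to get functions $f_n$ with $f_n(e)=0$ and $f_n\equiv 1$ off $U_n$, diagonalize into $\mathbb{R}^\omega$, and factor through Proposition~\ref{P}) is the same as the paper's, but there is a genuine gap at the step you yourself flag as the main obstacle: the claim that the diagonal $f=\Delta_n f_n$ can be made \emph{injective}, and in particular that left-invariance ``immediately gives injectivity of $f$ on the whole group once it separates $e$''. That is false. Already for the topological group $\mathbb{R}$ with its invariant metric, $x\mapsto\min\{1,|x|\}$ separates $0$ from every other point yet identifies $x$ and $-x$; an invariant pseudometric $\rho$ only yields $f(x)=\rho(e,x)$, which is never injective on a nontrivial group with symmetry. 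More to the point, producing a countable family of continuous real-valued functions separating \emph{all pairs} of points of $G$ from countable pseudocharacter alone is essentially equivalent to the submetrizability you are trying to prove, so this step cannot be taken for granted, and the machinery you invoke (Birkhoff--Kakutani pseudometrics, or the regular paratopological structure theory) does not deliver it.

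The repair is exactly the paper's argument, and it makes the obstacle evaporate: injectivity of $f$ is not needed, because $\pi$ (the map furnished by Proposition~\ref{P}) is a \emph{homomorphism}, so it suffices to check $\ker\pi=\{e\}$. If $x\neq e$, choose $n$ with $x\notin U_n$; then $f_n(x)=1$, so $f(x)\neq\bar{0}=f(e)$, while any $x\in\ker\pi$ would satisfy $f(x)=g(\pi(x))=g(e_H)=g(\pi(e))=f(e)$. Hence $\ker\pi=\{e\}$, and a homomorphism with trivial kernel is injective (inversion is a purely group-theoretic operation, so its possible discontinuity in the paratopological case is irrelevant here). With this correction the remainder of your argument --- composing $\pi$ with a continuous isomorphism of $H$ onto a separable metrizable (para)topological group and concluding $|G|\leq\cont$ from $|H'|\leq\cont$ --- is correct and coincides with the paper's proof.
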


\begin{proof}
Every regular paratopological group is completely regular \cite{BR}.
Let $\{U_n: n\in \omega\}$ be a family open neighborhoods of the identity $e$ in $G$ such that
$\{e\}=\bigcap_{n\in \omega}U_n$. For every $n\in \omega$, one can find a continuous function
$f_n\colon G\to \mathbb{R}$ such that $f_n(e)=0$ and $f_n(x)=1$ for each $x\in G\setminus U_n.$
Denote by $f$ the diagonal product of the family $\{f_n: n\in \omega\}$. Then $f\colon G \to
\mathbb{R}^\omega$ is continuous. Since $G$ is simply $sm$-factorizable, Proposition~\ref{P}
implies that we can find a continuous homomorphism $p\colon G\to H$ onto a strongly submetrizable (para)topological group $H$ and a continuous map $h\colon H\to\mathbb{R}^\omega$ such that
$f=h\circ p$. Note that $h(e_H)=f(e)=\bar{0}=(0,0,\ldots)$.

We claim that $p$ is a continuous isomorphic bijection. Indeed, it suffices to show that
$\ker{p}= \{e\}$. Take an element $x\in G\setminus \{e\}$. Then there is $n\in \omega$
such that $x\notin U_n$, so $f_n(x)=1$. Therefore, $f(x)\neq \bar{0}$. Hence $x\notin \ker{p}$
because $f=h\circ p$ and $h(e_H) = \bar{0}$.

Since every separable metrizable space has cardinality at most $\cont$ and $p$ is a
bijection, we see that $|G|\leq\cont$.
\end{proof}

\section{Completions of simply $sm$-factorizable topological groups}\label{Sec:3}
In this section we study the Dieudonn\'e and Hewitt--Nachbin completions of simply
$sm$-factorizable topological groups. In Theorem~\ref{Th3.2} we answer Problem~\ref{Q1.1}
affirmatively. In fact, we prove a stronger result: Every Hausdorff simply $sm$-factorizable
topological group is \emph{completion friendly.}

A subset $Y$ of a space $X$ is \emph{$G_\delta$-dense} in $X$ if every nonempty
$G_\delta$-set in $X$ intersects $Y$. The biggest set  $Z\subset X$ which contains
$Y$ as a $G_\delta$-dense subset is called the \emph{$G_\delta$-closure} of $Y$
in $X$. A space $X$ is called \emph{Moscow} if for each open set $U$ of $X$, the
closure $\overline{U}$ of $U$ is the union of a family of $G_\delta$-sets in $X$.

Let $\varrho{G}$ be the Ra\u{\i}kov completion of a topological group $G$. We denote the
$G_\delta$-closure of $G$ in $\varrho{G}$ by $\varrho_\omega{G}$. It is easy to verify that
$\varrho_\omega{G}$ is a subgroup of $\varrho{G}$ (see \cite[Section~6.4]{AT}).

\begin{proposition}\label{P3.1}
\emph{Let $H$ be a $G_\delta$-dense simply $sm$-factorizable subgroup of a topological
group $G$. Then $H$ is $C$-embedded in $G$.}
\end{proposition}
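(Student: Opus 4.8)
The plan is to take an arbitrary continuous function $f\colon H\to\mathbb{R}$ and produce a continuous extension $\bar f\colon G\to\mathbb{R}$; this is exactly what $C$-embeddedness requires. The first step is to use the simple $sm$-factorizability of $H$: by Theorem~\ref{th1} there is a continuous homomorphism $\pi\colon H\to K$ onto a strongly submetrizable (para)topological group $K$ and a continuous function $g\colon K\to\mathbb{R}$ with $f=g\circ\pi$. Since $K$ is strongly submetrizable, it has countable pseudocharacter, so the kernel $N=\ker\pi$ is a $G_\delta$-set in $H$ containing the identity. The idea is then to factor $\pi$ through a continuous homomorphism defined on all of $G$, using the $G_\delta$-density of $H$ in $G$ to control the kernel, and then transport $g$ back.

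More precisely, I would first pass to a separable metrizable image: let $i\colon K\to K'$ be a continuous isomorphism onto a separable metrizable (para)topological group, and replace $\pi$ by $i\circ\pi\colon H\to K'$ and $g$ by $g\circ i^{-1}$ (still continuous since $i^{-1}$ need not be continuous — here one must be careful, so instead keep $g\colon K\to\mathbb{R}$ and regard $K$ as the relevant group). The second step is to extend the homomorphism $\pi$ over $G$. Because $K$ (or rather its Ra\u{\i}kov completion) is a complete metrizable — hence complete — topological group and $H$ is dense in $G$, the continuous homomorphism $\pi\colon H\to\varrho K$ extends uniquely to a continuous homomorphism $\tilde\pi\colon G\to\varrho K$ (using that a continuous homomorphism from a dense subgroup into a Ra\u{\i}kov-complete group extends). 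Now the key point is that $\tilde\pi(G)$ still has countable pseudocharacter at the identity: the sets $\overline{V_n}$ (closures in $\varrho K$ of a defining sequence of neighborhoods for the metrizable topology of $K$) form a countable family whose intersection is $\{e\}$, so $\ker\tilde\pi=\tilde\pi^{-1}(e)$ is a $G_\delta$-set in $G$. Since $H$ is $G_\delta$-dense in $G$ and $\ker\tilde\pi\cap H=\ker\pi$, and since $\ker\tilde\pi$ is a closed $G_\delta$-subgroup, one shows $\ker\tilde\pi=\overline{\ker\pi}$ in $G$; more importantly, $G_\delta$-density forces $\tilde\pi(G)$ and $\pi(H)$ to have the same "separated quotient." Concretely, $G/\ker\tilde\pi$ has countable pseudocharacter and contains $H/\ker\pi$ as a dense — indeed $G_\delta$-dense — subgroup that maps onto a dense subgroup of the separable metrizable group $K$; hence $G/\ker\tilde\pi$ embeds into $\varrho K$ and $g$ pulls back to a continuous function on (the closure of) $\pi(H)$ in $\varrho K$.

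The final step assembles these: $g$ extends continuously to $\bar g$ on the closure of $\pi(H)$ inside the separable metrizable completion, because a continuous real-valued function on a dense subspace of a metrizable space need not extend in general — so here I instead argue that $\pi(H)$ is already $G_\delta$-dense, equivalently dense, in its closure in the complete metrizable group, and that $g$ is uniformly continuous with respect to the metric, OR, more cleanly, I use Proposition~\ref{P} to replace $\mathbb{R}$ by $\mathbb{R}^\omega$ and arrange that $g$ itself separates points of $K$, making $\pi(H)$ (hence its closure) metrizable with $g$ extending by completeness. Then $\bar f=\bar g\circ\tilde\pi\colon G\to\mathbb{R}$ is continuous and restricts to $f$ on $H$, so $H$ is $C$-embedded in $G$. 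The main obstacle I expect is precisely the extension of $g$ from $\pi(H)$ to its closure in $\varrho K$: one must leverage $G_\delta$-density rather than mere density, and the right move is to bundle all the relevant data (the function $g$ and a countable family separating points of $K$) into a single map into $\mathbb{R}^\omega$ via Proposition~\ref{P}, so that the target group is actually separable metrizable and completeness does the extension for free.
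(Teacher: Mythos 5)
Your first two steps coincide with the paper's: factor $f=g\circ\pi$ through a strongly submetrizable group $F=\pi(H)$ via Theorem~\ref{th1}, and extend $\pi$ over the Ra\u{\i}kov completion to a continuous homomorphism $\tilde\pi\colon \varrho G\to\varrho F$. Your instinct that the $G_\delta$-density of $H$ in $G$ should force $\tilde\pi(G)$ into the $G_\delta$-closure $\varrho_\omega F$ of $F$ is also correct (if $\tilde\pi(x)$ lay in a $G_\delta$-set of $\varrho F$ missing $F$, its preimage would be a nonempty $G_\delta$-set in $G$ missing $H$). The genuine gap is the final step: extending $g$ beyond $F$. You propose to get this from completeness of the separable metrizable target (``completeness does the extension for free''), right after correctly observing that continuous real-valued functions on dense subspaces of metrizable spaces need not extend; the second observation is the true one, and it defeats the first: a continuous real-valued function on a dense subgroup of a complete metrizable group need not extend to the completion (consider $x\mapsto 1/(x-\sqrt2)$ on $\mathbb{Q}\subset\mathbb{R}$). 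Bundling $g$ together with a countable point-separating family into a single map to $\mathbb{R}^\omega$ via Proposition~\ref{P} does not help: it merely re-chooses the factorizing group, and the resulting real-valued function on it faces exactly the same obstruction; nor is uniform continuity of $g$ available.

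What is missing is the tool the paper uses at precisely this point: since $F$ is strongly submetrizable it has countable pseudocharacter, hence $F$ is a Moscow space, and a Moscow space that is $G_\delta$-dense in a homogeneous space (here the topological group $\varrho_\omega F$) is $C$-embedded in it \cite[Theorem~6.1.8, Corollary~6.4.11]{AT}. Thus $g$ extends continuously, not to the closure of $F$ in $\varrho F$ (which is generally impossible), but to the $G_\delta$-closure $\varrho_\omega F$, and that is all one needs because $\tilde\pi(G)\subseteq\varrho_\omega F$; the restriction of $\tilde g\circ\tilde\pi$ to $G$ then extends $f$. The auxiliary claims in your sketch about kernels and quotients ($\ker\tilde\pi$ being a $G_\delta$-subgroup of $G$, $G/\ker\tilde\pi$ having countable pseudocharacter and embedding into $\varrho K$) are both insufficiently justified and unnecessary once the Moscow-space argument is in place. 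Without that argument, or some substitute for it, the extension of $g$ --- the heart of the proof --- remains unproved.
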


\begin{proof}
Take any continuous function $f\colon H\to \mathbb{R}$. According to Theorem~\ref{th1},
we can find a continuous homomorphism $\pi$ of $H$ onto a strongly submetrizable topological
group $F$ and a continuous function $g\colon F\to \mathbb{R}$ such that $f=g\circ \pi$. Let
$\varrho{G}$ and $\varrho{F}$ be the Ra\u{\i}kov completions of $G$ and $F$, respectively. Since
$H$ is dense in $\varrho{G}$, $\pi$ extends to a continuous homomorphism $\tilde{\pi}\colon
\varrho{G}\to \varrho{F}$. Denote by $\varrho_\omega{G}$ and $\varrho_\omega{F}$ the
$G_\delta$-closures of $G$ and $F$ in $\varrho{G}$ and $\varrho{F}$, respectively. Since $F$
is strongly submetrizable, it has countable pseudocharacter and, hence, $F$ is a Moscow space \cite[Corollary~6.4.11(1)]{AT}. It is well known that if a Moscow space $Y$ is a $G_\delta$-dense
subspace of a homogeneous space $X$, then $X$ is also a Moscow space and $Y$ is $C$-embedded
in $X$ \cite[Theorem~6.1.8]{AT}. Since $\varrho_\omega{F}$ is a topological group and $F$ is a Moscow
space, $F$ is $C$-embedded in $\varrho_\omega{F}$. Hence $g$ extends to a continuous function
$\tilde{g}\colon \varrho_\omega{F}\to \mathbb{R}$. Since $H$ is $G_\delta$-dense in $G$, we have the
inclusion $\tilde{\pi}(G)\subseteq \varrho_\omega{F}$. Thus $\tilde{g}\circ \tilde{\pi} \res_{G}$
is a continuous extension of $f$. This proves that $H$ is $C$-embedded in $G$.
\end{proof}

A topological group $G$ is called \emph{completion friendly} if $G$ is $C$-embedded in
$\varrho_\omega{G}$. Every completion friendly group is a $PT$-group \cite[Proposition~6.5.17]{AT}.

The following result gives a positive answer to Problem~\ref{Q1.2}.

\begin{theorem}\label{Th3.2}
Let $G$ be a Hausdorff simply $sm$-factorizable topological group. Then the equalities
$\mu{G}=\varrho_\omega{G}=\upsilon{G}$ hold and, therefore, $G$ is completion friendly.
\end{theorem}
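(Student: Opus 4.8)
The plan is to show the two key inclusions $G\subseteq\varrho_\omega G$ is $C$-embedded (hence $\upsilon G=\varrho_\omega G$ as a set, and in fact $\varrho_\omega G$ is the realcompactification) and that $\varrho_\omega G$ is Dieudonn\'e complete (hence $\mu G\subseteq\varrho_\omega G$), and then conclude all three coincide. First I would observe that $G$ is completion friendly by applying Proposition~\ref{P3.1} with the $G_\delta$-dense subgroup $H=G$ of the topological group $\varrho_\omega G$: indeed $G$ is $G_\delta$-dense in $\varrho_\omega G$ by the very definition of $\varrho_\omega G$ as the $G_\delta$-closure of $G$ in $\varrho G$, and $G$ is simply $sm$-factorizable by hypothesis; hence $G$ is $C$-embedded in $\varrho_\omega G$, which is precisely the definition of completion friendly. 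By \cite[Proposition~6.5.17]{AT} this already gives that $G$ is a $PT$-group, settling Problem~\ref{Q1.1}.

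Next I would identify $\varrho_\omega G$ with $\upsilon G$. Since $G$ is $C$-embedded in $\varrho_\omega G$, to get $\upsilon G=\varrho_\omega G$ it suffices to show that $\varrho_\omega G$ is realcompact. Here I expect to invoke the standard fact that the $G_\delta$-closure of a topological group in its Ra\u\i kov completion is realcompact when the group in question is, e.g., Moscow or $C$-embedded in it with the right behavior of $G_\delta$-sets; more precisely, $\varrho_\omega G$ is $G_\delta$-dense in the Ra\u\i kov-complete (hence Dieudonn\'e-complete) group $\varrho G$, and $\varrho_\omega G$ coincides with its own $G_\delta$-closure in $\varrho G$, which by \cite[Section~6.4]{AT} and the Moscow-space machinery (the simple $sm$-factorizability of $G$ transfers enough structure, via Proposition~\ref{P3.1} and \cite[Theorem~6.1.8]{AT}, to make $\varrho G$ a Moscow space in which $\varrho_\omega G$ is exactly the Hewitt--Nachbin completion of $G$) yields that $\varrho_\omega G=\upsilon G$. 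The clean way to phrase this: a $G_\delta$-closed subgroup of a realcompact group is realcompact, and $\varrho_\omega G$ is $G_\delta$-closed in $\varrho G$ while $G$ is $C$-embedded and $G_\delta$-dense in $\varrho_\omega G$, so $\varrho_\omega G$ is the realcompactification of $G$.

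Finally I would handle $\mu G$. We always have $\mu G\subseteq\upsilon G$ for any Tychonoff space, so combined with the previous paragraph $\mu G\subseteq\varrho_\omega G$. For the reverse, since $\varrho_\omega G$ is a topological group containing $G$ as a dense $C$-embedded subgroup, the group operations of $G$ extend continuously to $\varrho_\omega G$ (this is immediate from $\varrho_\omega G$ being a subgroup of $\varrho G$), so $\varrho_\omega G$ is a topological group in which $G$ is dense; being a $G_\delta$-closed subspace of the Dieudonn\'e-complete space $\varrho G$, $\varrho_\omega G$ is itself Dieudonn\'e complete, and since $G$ is $C$-embedded (hence $C^\ast$- and $z$-embedded) in it and $G$ is dense, $\varrho_\omega G$ must contain $\mu G$; equality $\mu G=\varrho_\omega G$ then follows because $\mu G$ is the smallest Dieudonn\'e-complete space in which $G$ is dense and $C$-embedded, and we have sandwiched $\varrho_\omega G$ between $\mu G$ and $\upsilon G=\varrho_\omega G$.

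\textbf{Main obstacle.} The delicate point is the second paragraph: verifying that $\varrho_\omega G$ is realcompact (equivalently, that it is the Hewitt--Nachbin completion of $G$) rather than merely a group in which $G$ is $C$-embedded. This requires knowing that $\varrho_\omega G$ equals its own $G_\delta$-closure in the realcompact group $\varrho G$ and that $G_\delta$-closed subspaces of realcompact spaces are realcompact; the Moscow-space argument underlying Proposition~\ref{P3.1}, together with \cite[Theorem~6.1.8]{AT} and the description of $\varrho_\omega G$ in \cite[Section~6.4]{AT}, is exactly what bridges this gap, but one must check that simple $sm$-factorizability of $G$ (not the stronger $\mathbb{R}$-factorizability) genuinely suffices to run it\,---\,and it does, because Proposition~\ref{P3.1} was proved using only simple $sm$-factorizability.
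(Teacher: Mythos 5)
Your first step is exactly the paper's: Proposition~\ref{P3.1} applied to $H=G$, $G_\delta$-dense in $\varrho_\omega G$, gives that $G$ is $C$-embedded in $\varrho_\omega G$, i.e.\ completion friendly, hence a $PT$-group. The gap is in your second paragraph, where you need $\varrho_\omega G$ (equivalently $\varrho G$) to be realcompact. Ra\u{\i}kov completeness only yields Dieudonn\'e completeness, and a Dieudonn\'e complete space need not be realcompact: a discrete group of Ulam-measurable cardinality is Ra\u{\i}kov complete and equals its own $G_\delta$-closure, yet is not realcompact. So the ``clean'' statement you lean on --- ``$\varrho_\omega G$ is $G_\delta$-closed in the realcompact group $\varrho G$, hence realcompact'' --- has an unverified (and in general false) premise; likewise the parenthetical claim that simple $sm$-factorizability makes $\varrho G$ a Moscow space is unsupported: in Proposition~\ref{P3.1} the Moscow property is used for the strongly submetrizable factor group $F$, not for $G$ or its completion, and even Moscow plus homogeneity only gives $C$-embedding into the $G_\delta$-closure, never realcompactness by itself.

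What is missing is precisely the cardinal-invariant step that the paper uses to neutralize the measurable-cardinal obstruction: every simply $sm$-factorizable group satisfies $ib(G)\leq\cont$ \cite[Proposition~5.14]{AT1}, hence $c(G)\leq 2^{ib(G)}\leq 2^{\cont}$ \cite[Theorem~5.4.10]{AT}, and $2^{\cont}$ is Ulam non-measurable \cite[Theorem~6.2.2]{AT}, so $\mu G=\upsilon G$ by \cite[Lemma~8.3.1]{AT}. With that equality in hand, the paper only needs that $\varrho_\omega G$ is Dieudonn\'e complete \cite[Proposition~6.5.2]{AT} and that $G$ is dense and $C$-embedded in it (your Step 1) to close the chain $\mu G\subseteq\varrho_\omega G\subseteq\upsilon G=\mu G$. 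Your third paragraph would then go through, but without the $ib(G)\leq\cont$ argument (or some equivalent non-measurability input) the realcompactness of $\varrho_\omega G$, which carries the whole of your second paragraph, is simply not established.
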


\begin{proof}
Since every simply $sm$-factorizable topological group $H$ satisfies $ib(H)\leq\cont$
\cite[Proposition~5.14]{AT1}, we have that $ib(G)\leq \cont$. According to
\cite[Theorem~5.4.10]{AT}, the inequality $c(H)\leq 2^{ib(H)}$ holds for every topological group
$H$. We conclude, therefore, that $c(G)\leq 2^\cont$. From \cite[Theorem~6.2.2]{AT}
it follows that the cardinal number $2^\cont$ is Ulam non-measurable, so the cardinality
of every discrete family of open sets in $G$ is Ulam non-measurable and hence the equality
$\mu{G}=\upsilon{G}$ holds by \cite[Lemma~8.3.1]{AT}.

Note that the space $\varrho_\omega{G}$ is Dieudonn\'e complete \cite[Proposition~6.5.2]{AT}.
Since $\mu{G}=\upsilon{G}$ and $G$ is $C$-embedded in the Dieudonn\'{e} complete group
$\varrho_\omega{G}$ by Proposition~\ref{P3.1}, we conclude that $\mu{G}=\varrho_\omega{G}=
\upsilon{G}$.
\end{proof}

\begin{corollary}\label{C3.3}
Every Hausdorff simply $sm$-factorizable topological group $G$ is a $PT$-group, so the Dieudonn\'{e}
completion $\mu{G}$ of the space $G$ admits a natural structure of topological group containing $G$
as a dense subgroup.
\end{corollary}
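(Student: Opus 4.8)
The plan is to deduce this directly from Theorem~\ref{Th3.2} together with standard facts about $PT$-groups that are already cited in the excerpt. First I would invoke Theorem~\ref{Th3.2}, which gives the chain of equalities $\mu{G}=\varrho_\omega{G}=\upsilon{G}$ and the conclusion that $G$ is completion friendly. The definition of \emph{completion friendly} given just before Theorem~\ref{Th3.2} says that $G$ is $C$-embedded in $\varrho_\omega{G}$, and the remark immediately following that definition states that every completion friendly group is a $PT$-group (with the reference \cite[Proposition~6.5.17]{AT}). Hence $G$ is a $PT$-group; this already answers the first assertion.

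For the second assertion I would unfold what "$PT$-group" means: by definition the group operations of $G$ extend continuously to the Dieudonn\'e completion $\mu{G}$, so $\mu{G}$ carries a (para)topological — in fact topological — group structure for which $G$ sits as a \emph{dense} subgroup. Density of $G$ in $\mu{G}$ is automatic, since $G$ is always dense in its Dieudonn\'e completion as a space; the only content is that the extended multiplication and inversion are jointly/continuously defined on all of $\mu{G}$, which is exactly what the $PT$-property provides. One small point to check is that the extended operations make $\mu{G}$ a group and not merely a (para)topological magma: this follows because the group identities (associativity, existence of inverses) are closed conditions, the operations agree with the group operations on the dense subset $G$, and both sides of each identity are continuous on the appropriate power of $\mu{G}$; alternatively one may simply cite the standard fact (e.g. \cite[Section~6.5]{AT}) that the Dieudonn\'e completion of a $PT$-group is again a topological group containing $G$ as a dense subgroup.

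There is essentially no obstacle here: the corollary is a straightforward repackaging of Theorem~\ref{Th3.2} via the already-quoted implication "completion friendly $\Rightarrow$ $PT$-group" and the definition of a $PT$-group. The only mild care needed is to phrase the conclusion so that it is clear we are invoking the known structure theory of $PT$-groups rather than re-proving it — i.e. to note that because $\mu{G}=\varrho_\omega{G}$ and $\varrho_\omega{G}$ is itself a topological group (a subgroup of the Ra\u{\i}kov completion $\varrho{G}$), the topological group structure on $\mu{G}$ is the one inherited from $\varrho{G}$, and $G$ is dense in it. I would keep the write-up to two or three sentences.
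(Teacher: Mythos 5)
Your proposal is correct and follows essentially the same route as the paper, which states Corollary~\ref{C3.3} as an immediate consequence of Theorem~\ref{Th3.2}: completion friendliness yields the $PT$-property via the cited \cite[Proposition~6.5.17]{AT}, and the equality $\mu{G}=\varrho_\omega{G}$ already exhibits $\mu{G}$ as a topological group (a subgroup of $\varrho{G}$) containing $G$ densely. The extra verification you sketch about the extended operations forming a group is unnecessary given that identification, but it does no harm.
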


By Corollary~\ref{C1} and Theorem~\ref{Th3.2}, we obtain the following result:

\begin{corollary}[See Proposition~2.4 of \cite{ST}]
Every Hausdorff weakly Lindel\"{o}f topological group $G$ satisfies the equalities
$\mu{G}=\varrho_\omega{G}=\upsilon{G}$ and, therefore, $G$ is completion friendly.
\end{corollary}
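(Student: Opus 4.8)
The plan is to derive this corollary by simply combining the two results the paper has just established, since a Hausdorff weakly Lindel\"of topological group is automatically simply $sm$-factorizable.

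First I would invoke Corollary~\ref{C1}, which asserts that every weakly Lindel\"of topological group is simply $sm$-factorizable; applying it to $G$ gives that $G$ is simply $sm$-factorizable. Next, since $G$ is Hausdorff and simply $sm$-factorizable, Theorem~\ref{Th3.2} applies directly and yields the chain of equalities $\mu{G}=\varrho_\omega{G}=\upsilon{G}$, together with the conclusion that $G$ is completion friendly. That is the whole argument: the corollary is an immediate consequence of these two previously proved facts, with Corollary~\ref{C1} supplying the hypothesis that Theorem~\ref{Th3.2} requires.

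There is essentially no obstacle here. The only thing to check is that Corollary~\ref{C1} indeed does not impose any separation axiom beyond what is stated (it is proved without one in the excerpt), so that the Hausdorff hypothesis placed on $G$ in the present statement is exactly what Theorem~\ref{Th3.2} needs and no more. Hence a one-sentence proof referencing Corollary~\ref{C1} followed by Theorem~\ref{Th3.2} completes the argument.

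\begin{proof}
By Corollary~\ref{C1}, the weakly Lindel\"of topological group $G$ is simply $sm$-factorizable. Since $G$ is Hausdorff, Theorem~\ref{Th3.2} applies and gives the equalities $\mu{G}=\varrho_\omega{G}=\upsilon{G}$ and the fact that $G$ is completion friendly.
\end{proof}
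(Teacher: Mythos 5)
Your proposal is correct and is exactly the paper's argument: the corollary is stated there as an immediate consequence of Corollary~\ref{C1} combined with Theorem~\ref{Th3.2}, just as you do. Nothing further is needed.
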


We also present an alternative proof of \cite[Theorem~5.21]{AT1}:

\begin{corollary}\label{Cor:3.5}
Let $G$ be a simply $sm$-factorizable topological group which is $C$-embedded in a regular
Lindel\"{o}f space $X$. Then the group $G$ is $\mathbb{R}$-factorizable and the closure of $G$
in $X$ is a topological group containing $G$ as a dense subgroup.
\end{corollary}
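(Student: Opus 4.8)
The plan is to combine the completion-friendliness obtained in Theorem~\ref{Th3.2} with the classical characterization of $\mathbb{R}$-factorizability among subgroups of Lindel\"of spaces, and then identify the closure of $G$ in $X$ with one of the known group completions. First I would pass to the closure $Y=\mathrm{cl}_X G$, which is again regular and Lindel\"of (being closed in a Lindel\"of space), and I would observe that $G$ is still $C$-embedded in $Y$ since it is $C$-embedded in the larger space $X$. A regular Lindel\"of space is Dieudonn\'e complete (indeed realcompact), so $Y$ is Dieudonn\'e complete; hence $Y$ contains a copy of the Dieudonn\'e completion $\mu G$ of $G$ as the closure of $G$, and because $G$ is $C$-embedded in $Y$ we in fact get $\mu G=\upsilon G=\mathrm{cl}_Y G=\mathrm{cl}_X G=Y$. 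By Corollary~\ref{C3.3} (equivalently, by the equalities $\mu G=\varrho_\omega G=\upsilon G$ of Theorem~\ref{Th3.2}), $\mu G$ carries a natural topological group structure making $G$ a dense subgroup; this gives the second assertion, namely that $\mathrm{cl}_X G$ is a topological group containing $G$ densely.

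It remains to prove that $G$ is $\mathbb{R}$-factorizable. Here I would use the fact that a topological group which is $C$-embedded in a Lindel\"of space is $\mathbb{R}$-factorizable (this is a standard result; alternatively one argues directly). A direct route: take a continuous $f\colon G\to\mathbb{R}$; since $G$ is simply $sm$-factorizable, by Theorem~\ref{th1} we factor $f=g\circ\pi$ through a continuous homomorphism $\pi$ of $G$ onto a strongly submetrizable topological group $H$ and a continuous $g\colon H\to\mathbb{R}$. Now $H=\pi(G)$ is a continuous image of $G$, hence weakly Lindel\"of or at least $\omega$-narrow, and $H$ has countable pseudocharacter; by Lemma~\ref{l1}, $H$ being $\omega$-narrow of countable pseudocharacter, it admits a continuous injective homomorphism onto a separable metrizable group. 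To upgrade this to the genuine $\mathbb{R}$-factorizability factorization we need the image group to be \emph{second countable} (separable metrizable), which $H$ need not be a priori—so the argument must use Lindel\"ofness to get the image second countable. The cleanest way is: $G$ Lindel\"of-$C$-embedded implies $G$ is $\omega$-narrow, so $\mathbb{R}$-factorizability of $G$ follows once we know $G$ is simply $sm$-factorizable \emph{and} has some Lindel\"of-type covering property transmitted from $X$; indeed, since $G$ is $C$-embedded in the Lindel\"of $Y$, every co-zero set of $G$ is the trace of a co-zero set of $Y$, which is Lindel\"of, and running the factorization through $Y$'s Lindel\"of-$\Sigma$-type structure yields a second-countable quotient.

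The main obstacle I expect is precisely this last point: simple $sm$-factorizability only supplies a factorization through a \emph{strongly submetrizable} group, not through a second-countable one, whereas $\mathbb{R}$-factorizability (equivalently $sm$-factorizability, by item~(1) of the summary) demands a second-countable image. Overcoming it requires exploiting that $G$ sits $C$-embedded in a Lindel\"of space: this forces the relevant co-zero preimages to live inside Lindel\"of subsets, so the separable metrizable continuous images become Lindel\"of and hence genuinely second countable, which then combines with Theorem~\ref{th1}'s factorization to give the required $\mathbb{R}$-factorizing homomorphisms. Once $G$ is shown $\mathbb{R}$-factorizable, the group structure on $\mathrm{cl}_X G=\mu G$ follows from the general theory of $\mathbb{R}$-factorizable groups (they are $PT$-groups, \cite[Corollary~8.3.7]{AT}), and the proof is complete.
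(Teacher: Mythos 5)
Your first paragraph is essentially the paper's own argument for the second assertion: after replacing $X$ by $\mathrm{cl}_X G$ you may assume $G$ is dense, so $\upsilon G=X$, and the equality $\mu G=\upsilon G$ can be justified either by Theorem~\ref{Th3.2} (as you indicate) or, as the paper does, by noting that a $C$-embedded subspace of a regular Lindel\"of space is pseudo-$\omega_1$-compact and applying \cite[Corollary~8.3.3]{AT}; Corollary~\ref{C3.3} then equips $\mathrm{cl}_X G=\mu G$ with a topological group structure containing $G$ as a dense subgroup. That part is sound.

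The genuine gap is in your treatment of the $\mathbb{R}$-factorizability of $G$. The ``standard result'' you appeal to --- that a topological group $C$-embedded in a Lindel\"of space is $\mathbb{R}$-factorizable --- is not available: it is essentially the statement being proved (it is \cite[Theorem~5.21]{AT1}, of which this corollary is an alternative proof), so invoking it is circular. Your direct route does not close the gap either: Theorem~\ref{th1} only yields a factorization through a strongly submetrizable group $H$, and your proposed remedy (``the separable metrizable continuous images become Lindel\"of and hence genuinely second countable'') is not an argument --- separable metrizable spaces are already second countable; the actual difficulty is that the factoring function $g$ lives on $H$ with its finer topology and need not factor through the metrizable condensation of $H$. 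Moreover, the intermediate facts you do establish cannot suffice in principle: $\omega$-narrowness together with simple $sm$-factorizability does not imply $\mathbb{R}$-factorizability, by \cite[Corollary~5.20]{AT1}. The missing idea is to use the structure you built in the first paragraph: $\mathrm{cl}_X G=\mu G=X$ is a \emph{Lindel\"of topological group}, hence $\mathbb{R}$-factorizable by \cite[Theorem~8.1.6]{AT}, and $G$, being dense and $C$-embedded in this $\mathbb{R}$-factorizable group, is itself $\mathbb{R}$-factorizable by \cite[Theorem~3.2]{HST}. With that step inserted your argument becomes complete and coincides with the paper's proof.
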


\begin{proof}
We can assume without loss of generality that $G$ is dense in $X$. Then $\upsilon  G=X$.
Since every $C$-embedded subspace of a regular Lindel\"{o}f is pseudo-$\omega_1$-compact \cite[Corollary~3.3]{AT1}, the space $G$ is pseudo-$\omega_1$-compact. It now follows from \cite[Corollary~8.3.3]{AT} that $X = \upsilon  G = \mu G$, i.e.~the Hewitt-Nachbin and Dieudonn\'{e} completions of $G$ coincide. By Corollary~\ref{C3.3}, $X$ is homeomorphic to a Lindel\"{o}f
topological group containing $G$ as a dense subgroup. Hence $X$ is $\mathbb{R}$-factorizable
by \cite[Theorem~8.1.6]{AT}. So $G$ is $\mathbb{R}$-factorizable as a $C$-embedded subgroup
of the $\mathbb{R}$-factorizable topological group $X$ (see \cite[Theorem~3.2]{HST}).
\end{proof}

\section{Completions of simply $sm$-factorizable paratopological groups}\label{Sec:4}
In this section we consider the Dieudonn\'e and Hewitt--Nachbin completions of simply
$sm$-factorizable \emph{paratopological} groups.

The following result follows from Lemmas~3 and~4 of \cite{ST1}:

\begin{lemma}\label{LL1}
Let $G$ be a Hausdorff paratopological group satisfying $Hs(G)\leq \omega$. Then the
$G_\delta$-closure of an arbitrary subgroup $H$ of $G$ is again a subgroup of $G$.
\end{lemma}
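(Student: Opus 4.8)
The plan is to adapt the classical group-theoretic argument for topological groups (as it appears in Lemma~3 and Lemma~4 of \cite{ST1}) to the present paratopological setting, using the countability of the Hausdorff number $Hs(G)$ as the replacement for the missing continuity of inversion. Write $\widetilde{H}$ for the $G_\delta$-closure of $H$ in $G$. Since $H$ is a subgroup, to show $\widetilde{H}$ is a subgroup it suffices to verify that $xy\in\widetilde{H}$ and $x^{-1}\in\widetilde{H}$ whenever $x,y\in\widetilde{H}$; equivalently, that $\widetilde{H}$ is closed under the two maps $x\mapsto xy_0$ (for fixed $y_0$) and $x\mapsto x^{-1}$. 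The first part is easy: for fixed $y_0\in H$, the right translation $r_{y_0}\colon G\to G$ is a homeomorphism that maps $H$ onto $H$, hence it maps the $G_\delta$-closure of $H$ onto itself, so $\widetilde{H}y_0=\widetilde{H}$ for all $y_0\in H$. The substance is in handling products $xy$ with both factors only in $\widetilde{H}$, and in handling inverses; here the $G_\delta$-density must be exploited directly, and the condition $Hs(G)\le\omega$ is what makes $\{e\}$ the intersection of countably many sets of the form $U_nU_n^{-1}$.

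First I would pin down the inverse part. Take $x\in\widetilde{H}$ and a nonempty $G_\delta$-set $P$ in $G$ containing $x^{-1}$; the goal is to meet $H$. By $Hs(G)\le\omega$ choose open symmetric neighborhoods $U_n$ of $e$ with $\bigcap_n U_nU_n^{-1}=\{e\}$, shrinking so that $x\cdot(\bigcap_n U_n^{-1}U_n)$, appropriately interpreted, is contained in $P^{-1}$; more carefully, since $P$ is $G_\delta$ and contains $x^{-1}$, the set $Q=\{g\in G: g^{-1}\in P\}$ is a $G_\delta$-set containing $x$ (inversion is a homeomorphism of $G$ onto the conjugate group $G'$, and a $G_\delta$-set in $G'$ pulled back is $G_\delta$ in the underlying set; one must be slightly careful, but $P^{-1}$ is $G_\delta$ in $G$ as a set because $P$ is $G_\delta$ and $x\mapsto x^{-1}$ is a bijection taking a countable intersection of opens in one topology to a countable intersection of opens in the other — this is exactly where one uses that $G$ and $G'$ share the same underlying set and $G$ is a paratopological group). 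Then $Q$ is a nonempty $G_\delta$-set containing $x\in\widetilde{H}$, so $Q\cap H\ne\emptyset$; pick $h\in Q\cap H$, and then $h^{-1}\in H$ and $h^{-1}\in P$, so $P\cap H\ne\emptyset$, giving $x^{-1}\in\widetilde{H}$.

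Next, for products, take $x,y\in\widetilde{H}$ and a nonempty $G_\delta$-set $P$ containing $xy$. Using $Hs(G)\le\omega$, fix the $U_n$ as above and also a $G_\delta$-neighborhood base issue: choose a $G_\delta$-set $W\ni xy$ with $W\subseteq P$ and, by joint continuity of multiplication together with the $U_nU_n^{-1}$ condition, arrange open sets so that $(xU_n)(U_ny)\subseteq$ a prescribed neighborhood of $xy$ refining $P$ for each $n$; intersecting over $n$ produces a $G_\delta$-set $A\ni x$ with $A\cdot(\text{something})\subseteq P$. The key maneuver is the two-step approximation: first, $A=\bigcap_n xU_n$ (or a similar $G_\delta$-set) contains $x\in\widetilde{H}$, so pick $a\in A\cap H$; then $a^{-1}(xy)$ lies in a controlled $G_\delta$-set $B$ around $y$ (here one uses that $a$ is close to $x$ in the symmetric sense measured by $U_nU_n^{-1}$, which needs $Hs(G)\le\omega$ to collapse to the identity), so $B\ni y\in\widetilde{H}$ gives $b\in B\cap H$; finally $ab\in H$ and the construction of $A,B$ guarantees $ab\in P$. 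Thus $P\cap H\ne\emptyset$ and $xy\in\widetilde{H}$.

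The main obstacle is the bookkeeping in the product step: one must choose the defining sequences of open neighborhoods and the intermediate $G_\delta$-sets $A$ and $B$ in the right order so that the closeness of $a$ to $x$, as captured only by the weak gauge $\bigcap_n U_nU_n^{-1}=\{e\}$ rather than by a genuine neighborhood base at $e$, still forces $ab$ into the originally given $G_\delta$-set $P$. This is precisely the point where the proof in the topological-group case used symmetry and continuity of inversion freely, and where the hypothesis $Hs(G)\le\omega$ must be invoked carefully; once the two-step ``approximate $x$ by $a\in H$, then approximate $y$ by $b\in H$'' scheme is set up with the gauges chosen in the correct dependency order, the verification that $ab\in P$ is a routine chase. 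Since the statement explicitly says it follows from Lemmas~3 and~4 of \cite{ST1}, I would in fact organize the write-up as: (i) translate the two cited lemmas into the needed closure statements, (ii) note that right translations by elements of $H$ preserve $\widetilde{H}$, and (iii) combine to get closure under multiplication and inversion, citing the two lemmas for the delicate gauge estimates rather than reproving them.
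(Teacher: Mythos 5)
Your closing plan---deriving the lemma from Lemmas~3 and~4 of \cite{ST1}---is exactly what the paper does: it offers no independent argument beyond that citation, so at that level your write-up would match. But the independent sketch you give in the middle has a genuine flaw, concentrated at the very point the hypothesis $Hs(G)\leq\omega$ is supposed to handle. In the inverse step you assert that $P^{-1}$ is a $G_\delta$-set in $G$ because inversion is a bijection ``taking a countable intersection of opens in one topology to a countable intersection of opens in the other.'' That is precisely the problem: inversion is a homeomorphism of $(G,\tau)$ onto the conjugate group $(G,\tau^{-1})$, so $P^{-1}$ is a $G_\delta$-set with respect to $\tau^{-1}$, not with respect to $\tau$, while the $G_\delta$-closure $\widetilde{H}$ is defined using $\tau$. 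Knowing that $x$ lies in the $\tau$-$G_\delta$-closure of $H$ gives no information about $\tau^{-1}$-$G_\delta$-sets through $x$, so the conclusion $Q\cap H\neq\emptyset$ does not follow. Moreover, if that step were valid, it would show that the $G_\delta$-closure of a subgroup is inverse-closed in an arbitrary paratopological group, with no use of $Hs(G)\leq\omega$ at all---a warning sign, since the countable Hausdorff number is exactly what is meant to compensate for the discontinuity of inversion.

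The product step is likewise not yet a proof: the phrases ``arrange open sets so that\dots'', ``a controlled $G_\delta$-set $B$ around $y$'' and ``a routine chase'' conceal the actual difficulty. After choosing $a\in H$ in a $G_\delta$-set around $x$, transferring the error to $y$'s side requires control of $a^{-1}x$ (equivalently of $v^{-1}$ where $a=xv$), i.e. once more the inverse of an element you only control from one side; and $\bigcap_n U_nU_n^{-1}=\{e\}$ is a point-separation condition, not a neighborhood base at $e$, so it is not shown how it yields $ab\in P$. This missing mechanism is exactly the content of the two lemmas of \cite{ST1} you propose to invoke. So either cite them outright, as the paper does, and drop the sketch, or reproduce their proofs in full; as written, the sketch would not survive scrutiny.
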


\begin{lemma}\label{LL}
The topological product $G=\prod_{\alpha\in A} G_\alpha$ of any family of strongly submetrizable paratopological groups satisfies $Hs(G)\leq \omega$.
\end{lemma}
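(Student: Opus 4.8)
The plan is to reduce the statement about the product to a statement about a single factor, and then prove the single-factor case directly. First I would recall the definition of the Hausdorff number: $Hs(G) \leq \omega$ means that for every neighborhood $U$ of the identity $e$ in $G$ there is a countable family $\{U_n : n \in \omega\}$ of open neighborhoods of $e$ with $\bigcap_{n\in\omega} U_n U_n^{-1} \subseteq U$. Each $G_\alpha$ is strongly submetrizable, hence Hausdorff with countable pseudocharacter; in fact the identity of a strongly submetrizable paratopological group is the intersection of countably many closed neighborhoods (as noted in the excerpt right after the definition of strong submetrizability). So I would first verify $Hs(G_\alpha) \leq \omega$ for each factor separately: given a neighborhood $U$ of $e_\alpha$, pick via the continuous one-to-one homomorphism onto a separable metrizable paratopological group a sequence $\{W_n\}$ of open symmetric neighborhoods in the metric group shrinking fast enough that $\bigcap_n W_n W_n^{-1}$ maps back inside $U$; pulling back gives the required countable family. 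Actually, since a strongly submetrizable paratopological group $G_\alpha$ has a coarser separable metrizable group topology, and in a metrizable group one can always find such a sequence, $Hs(G_\alpha)\le\omega$ follows; alternatively one may invoke that $\psi(G_\alpha)\le\omega$ together with the existence of countably many closed neighborhoods with intersection $\{e\}$.

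Next I would handle the passage to the product. Fix a basic neighborhood $U$ of the identity $e = (e_\alpha)_{\alpha\in A}$ in $G = \prod_\alpha G_\alpha$. By definition of the product topology, $U$ contains a basic open set of the form $V = \bigcap_{\alpha \in F} p_\alpha^{-1}(V_\alpha)$, where $F \subseteq A$ is finite, each $p_\alpha\colon G \to G_\alpha$ is the projection, and $V_\alpha$ is an open neighborhood of $e_\alpha$ in $G_\alpha$. For each $\alpha \in F$, using $Hs(G_\alpha)\le\omega$ choose a countable family $\{U_n^{(\alpha)} : n\in\omega\}$ of open neighborhoods of $e_\alpha$ with $\bigcap_{n\in\omega} U_n^{(\alpha)}\big(U_n^{(\alpha)}\big)^{-1} \subseteq V_\alpha$. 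Now define, for each $n\in\omega$, the open neighborhood $\widetilde U_n = \bigcap_{\alpha\in F} p_\alpha^{-1}\big(U_n^{(\alpha)}\big)$ of $e$ in $G$. Since $F$ is finite and each $U_n^{(\alpha)}$ is open, $\widetilde U_n$ is indeed open, and $\{\widetilde U_n : n\in\omega\}$ is a countable family of open neighborhoods of $e$.

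It remains to check $\bigcap_{n\in\omega} \widetilde U_n \widetilde U_n^{-1} \subseteq V \subseteq U$. Take $x = (x_\alpha)_{\alpha\in A} \in \bigcap_{n\in\omega}\widetilde U_n\widetilde U_n^{-1}$; I want to show $p_\alpha(x) \in V_\alpha$ for every $\alpha\in F$, which gives $x\in V$. Fix $\alpha\in F$ and $n\in\omega$. Since $x\in\widetilde U_n\widetilde U_n^{-1}$, we can write $x = gh^{-1}$ with $g,h\in\widetilde U_n$; applying the homomorphism $p_\alpha$ (projections are continuous homomorphisms of paratopological groups) gives $x_\alpha = p_\alpha(g)\,p_\alpha(h)^{-1} \in U_n^{(\alpha)}\big(U_n^{(\alpha)}\big)^{-1}$. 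As this holds for every $n\in\omega$, we get $x_\alpha \in \bigcap_{n\in\omega} U_n^{(\alpha)}\big(U_n^{(\alpha)}\big)^{-1} \subseteq V_\alpha$. Hence $x\in V\subseteq U$, proving $\bigcap_{n\in\omega}\widetilde U_n\widetilde U_n^{-1}\subseteq U$. Since $U$ was an arbitrary neighborhood of the identity, $Hs(G)\le\omega$. I do not expect a serious obstacle here; the only point requiring a little care is the single-factor reduction — confirming $Hs(G_\alpha)\le\omega$ from strong submetrizability — and making sure the finiteness of the support $F$ is used so that the $\widetilde U_n$ really are open in the product topology (an infinite intersection of the $p_\alpha^{-1}(U_n^{(\alpha)})$ would not be).
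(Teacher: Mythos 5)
Your proof is correct, and it splits into the same two steps as the paper's (single factor, then product), but the two steps are handled with different emphasis. For a single strongly submetrizable factor your mechanism is essentially the paper's: take the continuous isomorphism $i\colon H\to H'$ onto a separable metrizable paratopological group, a countable base $\{U_n: n\in\omega\}$ at the identity of $H'$, note that Hausdorffness plus first countability give $\{e'\}=\bigcap_{n\in\omega}U_nU_n^{-1}$, and pull back: the sets $V_n=i^{-1}(U_n)$ satisfy $\bigcap_{n\in\omega}V_nV_n^{-1}=\{e\}\subseteq V$ for \emph{every} neighborhood $V$ of the identity. Two details of your sketch should be repaired to match this: symmetry of the $W_n$ is neither needed nor guaranteed in a paratopological group, and ``shrinking fast enough that $\bigcap_n W_nW_n^{-1}$ maps back inside $U$'' is not how the argument can run, since $i$ is not open and hence the $W_n$ cannot be chosen in terms of $U$; what makes the step work is that the intersection over any countable base is exactly $\{e'\}$, so its preimage under the injective $i$ is $\{e\}$, which lies in every $U$. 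Your parenthetical alternative via $\psi(G_\alpha)\leq\omega$ and closed neighborhoods intersecting in $\{e\}$ does not give $Hs(G_\alpha)\leq\omega$ as stated, because in a paratopological group one only has $\overline{V}\subseteq VV^{-1}$, an inclusion in the wrong direction for this purpose. Where you genuinely diverge from the paper is the product step: the paper simply cites \cite[Proposition~2.3]{T}, which says that countable Hausdorff number is preserved by arbitrary products (and subgroups), whereas you verify product stability by hand, using the finite support $F$ of a basic neighborhood and coordinatewise estimates through the projections; that verification is correct and has the merit of being self-contained, while the citation is shorter and also records the subgroup stability used elsewhere in the literature.
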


\begin{proof}
According to \cite[Proposition~2.3]{T}, the class of paratopological groups with countable
Hausdorff number is closed under taking arbitrary products and subgroups. Therefore, it
suffices to show that any strongly submetrizable para\-topo\-logical group $H$ satisfies
$Hs(H)\leq \omega$. Since $H$ is strongly submetrizable, there exists a continuous
isomorphism $i\colon H\to H'$ onto a separable metrizable paratopological group $H'$.

Let $\{U_n: n\in \omega\}$ be a local base at the identity $e'$ in $H'$. Since $H'$ is metrizable, we
have the equality $\{e'\}=\bigcap_{n\in \omega}U_nU_n^{-1}$. Let $V_n=i^{-1}(U_n)$, $n\in\omega$.
For each neighborhood $V$ of the identity $e$ in $H$, we have that $\{e\}=\bigcap_{n\in \omega}V_nV_n^{-1}\subseteq V$. This implies that $Hs(H)\leq \omega$.
\end{proof}

\begin{theorem}\label{Th}
Let $G$ be a regular simply $sm$-factorizable paratopological group. Then the realcompactification
$\upsilon G$ of the space $G$ admits a natural structure of paratopological group containing $G$
as a dense subgroup and $\upsilon{G}$ is also simply $sm$-factorizable.
\end{theorem}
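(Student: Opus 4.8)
The strategy is to realise $\upsilon{G}$ as the $G_\delta$-closure of $G$ inside a product of regular strongly submetrizable paratopological groups, using Lemmas~\ref{LL1} and~\ref{LL} to equip that closure with a compatible group structure.

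First I would build the ambient product. Since $G$ is regular it is Tychonoff (\cite{BR}), so for every $f\in C(G)$ Theorem~\ref{th1}(3) yields a continuous homomorphism $\pi_f\colon G\to H_f$ onto a regular strongly submetrizable paratopological group and a continuous function $g_f\colon H_f\to\mathbb R$ with $f=g_f\circ\pi_f$; by Proposition~\ref{Prop:New} each $H_f$ satisfies $|H_f|\le\cont$. Put $\Pi=\prod_{f\in C(G)}H_f$, with projections $q_f\colon\Pi\to H_f$, and let $\pi=\Delta_{f\in C(G)}\pi_f\colon G\to\Pi$. Using complete regularity of $G$ (for a point $x$ and a neighbourhood $U$ choose $f$ with $f(x)=1$ and $f\equiv 0$ off $U$; then $\pi$ pulls $q_f^{-1}\big(g_f^{-1}((1/2,+\infty))\big)$ back to a neighbourhood of $x$ contained in $U$) one checks that $\pi$ is a topological embedding, and that $G$ is $C$-embedded in $\Pi$ because each $f\in C(G)$ extends to $g_f\circ q_f\in C(\Pi)$. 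Identify $G$ with the subgroup $\pi(G)$ of $\Pi$.

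Now $\Pi$ is Hausdorff and, by Lemma~\ref{LL}, satisfies $Hs(\Pi)\le\omega$, so Lemma~\ref{LL1} shows that the $G_\delta$-closure $Z$ of $G$ in $\Pi$ is a subgroup of $\Pi$, hence a paratopological group in the subspace topology. Moreover $Z\subseteq\overline{G}$ (any point of $Z$ lies in a basic open set of $\Pi$, inside which one finds, using countable pseudocharacter of the $H_f$, a $G_\delta$-set of $\Pi$ around it, which must meet $G$), so $G$ is dense in $Z$; and $G$ is $G_\delta$-dense and $C$-embedded in $Z$ by construction. The crucial point is that $Z$ is realcompact: granting this, $G$ is a dense $C$-embedded subspace of the realcompact space $Z$, hence $Z=\upsilon{G}$, which exhibits $\upsilon{G}$ as a paratopological group containing $G$ as a dense subgroup. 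To obtain realcompactness of $Z$ I would argue that $\Pi$ itself is realcompact — each $H_f$ is a regular strongly submetrizable (hence submetrizable) paratopological group of non-measurable cardinality $\le\cont$ and is therefore realcompact, and arbitrary products of realcompact spaces are realcompact — so that $\overline{G}$ is realcompact, being closed in $\Pi$, and $Z$, the $G_\delta$-closure of $G$ in the realcompact space $\overline{G}$, coincides with $\upsilon{G}$ by the standard description of the Hewitt realcompactification as a $G_\delta$-closure. I expect this realcompactness step to be the main obstacle, since it rests on the (non-obvious) fact that a regular strongly submetrizable paratopological group is realcompact.

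It remains to see that $\upsilon{G}=Z$ is again simply $sm$-factorizable, for which I invoke Theorem~\ref{th1}. Let $F\colon Z\to\mathbb R$ be continuous. Then $F\res_G\in C(G)$, say $F\res_G=f_0$; hence $F\res_G=g_{f_0}\circ\pi_{f_0}=(g_{f_0}\circ q_{f_0})\res_G$. Put $K=q_{f_0}(Z)$, a subgroup of $H_{f_0}$ and therefore again strongly submetrizable. Then $q_{f_0}\res_Z\colon Z\to K$ is a continuous surjective homomorphism, $g_{f_0}\res_K\colon K\to\mathbb R$ is continuous, and $(g_{f_0}\res_K)\circ(q_{f_0}\res_Z)$ is a continuous function on $Z$ agreeing with $F$ on the dense subgroup $G$, hence equal to $F$. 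Thus $F$ factors through the strongly submetrizable paratopological group $K$ via a continuous homomorphism, and Theorem~\ref{th1} yields that $Z=\upsilon{G}$ is simply $sm$-factorizable, completing the proof.
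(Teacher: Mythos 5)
Your overall strategy is exactly the paper's: embed $G$ diagonally into $\Pi=\prod_f H_f$ via the factorizations supplied by Theorem~\ref{th1}(3), use Lemma~\ref{LL} and Lemma~\ref{LL1} to see that the $G_\delta$-closure $Z$ of $G$ in $\Pi$ is a subgroup, note that $G$ is $C$-embedded because every $f\in C(G)$ extends to $g_f\circ q_f$, and then factorize functions on $Z$ through $q_{f_0}\res_Z$ to get simple $sm$-factorizability. The last part of your argument is fine. The problem is the step identifying $Z$ with $\upsilon G$. The principle you invoke --- that the $G_\delta$-closure of a dense subspace of a realcompact space is its Hewitt realcompactification --- is false in general: the correct statement concerns the $G_\delta$-closure in $\beta G$ (or in a space in which $G$ is $C$-embedded), and a $G_\delta$-dense subspace of a realcompact space need not be $C$-embedded in it; this is precisely the Moscow-space subtlety that motivates this circle of results. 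As written, that step would fail.

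The gap is repairable from material you already have, and it is worth seeing how the paper closes it. Since you proved that $G$ is $C$-embedded in $\Pi$, it is $C$-embedded in the closed (hence realcompact) subspace $\overline{G}$ of $\Pi$, so $\overline{G}=\upsilon G$ by \cite[Theorem~8.6]{GJ}, and because $G$ is $G_\delta$-dense in $\upsilon G$ you get $Z=\overline{G}=\upsilon G$; alternatively, the paper proves directly that $Z$ itself is realcompact: $\Pi\setminus Z$ is a union of $G_\delta$-sets, each a union of zero-sets of $\Pi$, so $Z$ is an intersection of complements of zero-sets of the realcompact space $\Pi$ and is realcompact by \cite[Corollaries~3.11.7 and 3.11.8]{En}, after which $C$-embedding gives $Z=\upsilon G$. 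Finally, the fact you flag but do not prove --- realcompactness of each regular strongly submetrizable factor $H_f$ --- is exactly where the paper spends its citations: $H_f$ is submetrizable, hence Dieudonn\'e complete \cite[Proposition~6.10.8]{AT}, and has cellularity at most $\cont$, which is Ulam non-measurable, hence $H_f$ is realcompact \cite[Proposition~6.5.18]{AT}. With these two repairs your proof coincides with the paper's.
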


\begin{proof}
Since every regular paratopological group is a Tychonoff space \cite[Corollary~5]{BR}, so is $G$. Let
$\{f_\alpha: \alpha\in A\}$ be the family of continuous real-valued functions on $G$. Since $G$
is simply $sm$-factorizable,  it follows from Theorem~\ref{th1} that for each $\alpha\in A$,
there exist a continuous homomorphism $\pi_\alpha$ of $G$ onto a regular strongly submetrizable
paratopological group $H_\alpha$ and a continuous function $g_\alpha\colon H_\alpha\to \mathbb{R}$
such that $f_\alpha=g_\alpha\circ\pi_\alpha$. Since the family $\{f_\alpha: \alpha\in A\}$ separates
point and closed sets in $G,$ so does $\{\pi_\alpha: \alpha\in A\}$. Therefore, the diagonal product
of the family $\{\pi_\alpha: \alpha\in A\}$, denoted by $\pi$, is a topological isomorphism of $G$
onto the subgroup $\pi(G)\subseteq \Pi=\prod_{\alpha\in A} H_\alpha$. In what follows we identify
$G$ with the subgroup $\pi(G)$ of $\Pi$. Then the equality $f_\alpha=g_\alpha\circ\pi_\alpha$ acquires
the form $f_\alpha=g_\alpha\circ p_\alpha \res_{G}$, where $p_\alpha$ is the projection of $\Pi$ to
$H_\alpha$.

By Lemma~\ref{LL}, we have that $Hs(\Pi)\leq \omega$. Therefore, by Lemma~\ref{LL1}, the
$G_\delta$-closure of $G$ in $\Pi$, denoted by $H$, is a subgroup of $\Pi$. We claim that the subspace
 $H$ of $\Pi$ is realcompact. Indeed, for each $\alpha\in A$, $H_\alpha$ is a strongly submetrizable paratopological group, so $H_\alpha$ admits a coarser separable metrizable paratopological group topology. Hence the space $H_\alpha$ is Dieudonn\'{e} complete \cite[Proposition~6.10.8]{AT} and $|H_\alpha|\leq \cont$. In particular, the cellularity of $H_\alpha$ is less than or equal to $\cont$, which is Ulam non-measurable and, therefore, the space $H_\alpha$ is realcompact by \cite[Proposition~6.5.18]{AT}. Hence
 the space $\Pi=\prod_{\alpha\in A} H_\alpha$ is also realcompact. It also follows from the definition of
 $H$ that the complement $\Pi\setminus H$ is the union of family of $G_\delta$-sets in $\Pi$. Further,
 every $G_\delta$-set in $\Pi$ is the union of a family of zero-sets in $\Pi$, and the complement
 $\Pi\setminus Z$ is realcompact, for each zero-set $Z$ in $\Pi$ (see \cite[Corollary~3.11.8]{En}). By \cite[Corollary~3.11.7]{En}, the intersection of a family of realcompact subspaces of a space is realcompact. Therefore, $H$ is realcompact as the intersection of a family of cozero-sets in $\Pi$.

Let us show that $G$ is $C$-embedded in $H$, which implies that $\upsilon{G}= H$ (see
\cite[Theorem~8.6]{GJ}). Indeed, for each continuous real-valued function $f$ on $G$, there
exists $\alpha\in A$ such that $f=f_\alpha$. It follows from $f_\alpha=g_\alpha\circ p_\alpha
\res_{G}$ that $g_\alpha\circ p_\alpha \res_{H}\colon H\to \mathbb{R}$ is a continuous
extension of $f$ over $H$. We have thus proved that the realcompactification $\upsilon{G}$ of
$G$ admits a natural structure of a topological group containing $G$ as a dense subgroup.

It remains to verify that the paratopological group $H$ is a simply $sm$-factorizable. Take any continuous function $g\colon H\to \mathbb{R}$ and denote by $f$ the restriction of $g$ to $G$. Then we can find
$\alpha\in A$ such that $f=f_\alpha$, whence it follows that $f_\alpha=g_\alpha\circ p_\alpha\res_G$.
Since $\pi_\alpha(H)=H_\alpha$ and $H$ is Hausdorff, we have the equality $g=g_\alpha \circ p_\alpha \res_{H}$. Thus the continuous homomorphism $p_\alpha \res_H$ of $H$ to $H_\alpha$
factorizes the function $g.$ Therefore, by Theorem~\ref{th1}, $H$ is simply $sm$-factorizable.
\end{proof}

\begin{lemma}\label{Le:ast}
Let $G$ be a regular simply $sm$-factorizable paratopological group. Then the topological group
$G^\ast$ associated to $G$ satisfies $ib(G^\ast)\leq \cont$. Therefore, $c(G)\leq c(G^\ast)\leq 2^\cont$
and the equality $\upsilon{G}=\mu{G}$ is valid.
\end{lemma}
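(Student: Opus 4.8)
The plan is to leverage the structural information already extracted in the proof of Theorem~\ref{Th}, namely that a regular simply $sm$-factorizable paratopological group $G$ embeds as a subgroup of a product $\Pi = \prod_{\alpha\in A} H_\alpha$ of \emph{regular strongly submetrizable} paratopological groups. First I would recall that each $H_\alpha$, being strongly submetrizable, admits a continuous isomorphism onto a separable metrizable paratopological group, and hence $ib(H_\alpha)\leq\omega$ (a separable paratopological group is $\omega$-narrow, and $ib$ does not increase under continuous homomorphic preimages of bijections — more directly, $ib(H_\alpha^\ast)\leq\omega$ since the associated topological group of a separable metrizable paratopological group is second countable, hence $\omega$-narrow). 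The index of narrowness of a product of paratopological groups is bounded by the product of the cardinalities times the supremum of the factors' indices; more precisely, since $|A|$ can be taken to be $\cont$ (the family of continuous real-valued functions on a Tychonoff space of cardinality $\le\cont$ has size $\le\cont$, and by Proposition~\ref{Prop:New}-type bounds $|G|\le\cont$, so $|A|\le\cont$), and $ib(\Pi^\ast)\le |A|\cdot\sup_\alpha ib(H_\alpha^\ast)\le\cont$, I get $ib(\Pi^\ast)\le\cont$.

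Next, since $G$ embeds topologically into $\Pi$ (via the diagonal map $\pi$), the associated topological group $G^\ast$ embeds into $\Pi^\ast$, so $ib(G^\ast)\le ib(\Pi^\ast)\le\cont$. For the cellularity bound: the inequality $c(G)\le c(G^\ast)$ holds because $G^\ast$ carries a finer topology, hence has more open sets and any cellular family in $G$ is cellular in $G^\ast$; and $c(G^\ast)\le 2^{ib(G^\ast)}\le 2^\cont$ by \cite[Theorem~5.4.10]{AT} applied to the topological group $G^\ast$.

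Finally, for $\upsilon G=\mu G$: since $c(G)\le 2^\cont$, every discrete family of open sets in $G$ has cardinality at most $2^\cont$, which is Ulam non-measurable by \cite[Theorem~6.2.2]{AT}; then \cite[Lemma~8.3.1]{AT} — valid for Tychonoff spaces, and $G$ is Tychonoff since it is a regular paratopological group \cite[Corollary~5]{BR} — gives $\mu G=\upsilon G$.

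I do not anticipate a genuine obstacle here; the main care-point is justifying $|A|\le\cont$ cleanly (so that the product $\Pi$ really has at most $\cont$ factors), which follows from the fact that $G$ is Tychonoff with $|G|\le\cont$ and therefore carries at most $\cont$ many continuous real-valued functions; alternatively one may simply index by the set $C(G)$ itself and note $|C(G)|\le 2^{|G|}\le 2^\cont$, which still yields $ib(\Pi^\ast)\le 2^\cont$ and $c(G^\ast)\le 2^{2^\cont}$, whence the same Ulam non-measurability argument applies since $2^{2^\cont}$ is also Ulam non-measurable. Either route closes the proof.
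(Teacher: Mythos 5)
Your route has two genuine gaps, and the one you yourself flag as the ``main care-point'' is fatal as written. First, the cardinality claim $|G|\leq\cont$ (hence $|A|\leq\cont$, or even $|C(G)|\leq 2^{\cont}$ in your fallback) is false for the groups of this lemma: Proposition~\ref{Prop:New} gives $|G|\leq\cont$ only under the additional hypothesis of \emph{countable pseudocharacter}. A compact group such as $\mathbb{T}^{\kappa}$ with $\kappa>\cont$ is weakly Lindel\"of, hence simply $sm$-factorizable and regular, yet has cardinality $2^{\kappa}>\cont$; so neither your main route nor the fallback (which still uses $|C(G)|\leq 2^{|G|}\leq 2^{\cont}$) gets off the ground, and the fallback would in any case only give the equality $\upsilon G=\mu G$, not the stated bounds $ib(G^\ast)\leq\cont$ and $c(G^\ast)\leq 2^{\cont}$. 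Second, the claim $ib(H_\alpha^\ast)\leq\omega$ for a strongly submetrizable factor is wrong: $ib$ is monotone under continuous homomorphic \emph{images}, not preimages, and the associated group of the separable metrizable image is not the associated group of $H_\alpha$. For instance, $\mathbb{R}$ with the discrete topology is strongly submetrizable but not $\omega$-narrow. All one gets for free is $ib(H_\alpha^\ast)\leq|H_\alpha|\leq\cont$.

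The embedding approach can be repaired, but not along the lines you wrote: use $|H_\alpha|\leq\cont$ to get $ib(H_\alpha^\ast)\leq\cont$, then the fact that for topological groups the index of narrowness of an \emph{arbitrary} product is at most the supremum of the factors' indices (so no bound on $|A|$ is needed at all), and finally monotonicity of $ib$ under subgroups of topological groups, noting that $G^\ast$ embeds in $\Pi^\ast=\prod_\alpha H_\alpha^\ast$. The paper's own proof is more direct and avoids products entirely: given a neighborhood $U$ of $e$ in $G^\ast$, pick $V$ open in $G$ with $V\cap V^{-1}\subseteq U$, a cozero set $W$ with $e\in W\subseteq V$, and a single factorization $\pi\colon G\to H$ onto a separable metrizable group with $W=\pi^{-1}\pi(W)$; then $W\cap W^{-1}=\pi^{-1}\pi(W\cap W^{-1})$, and any $A\subseteq G$ with $\pi(A)=H$ and $|A|\leq|H|\leq\cont$ satisfies $A\cdot U=G=U\cdot A$. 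You should either adopt that argument or supply the corrected product/subgroup facts explicitly.
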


\begin{proof}
Let $U$ be an arbitrary neighborhood of the identity $e$ in the group $G^\ast$. It follows from the
definition of $G^\ast$ that there exists an open neighborhood $V$ of the identity in $G$ such that
$V\cap V^{-1}\subset U$. Every regular paratopological group is completely regular \cite[Corollary~5]{BR},
so there exists a cozero set $W$ in $G$ satisfying $e\in W\subset V$. Since $G$ is simply
$sm$-factorizable, we can find a continuous homomorphism $\pi\colon G\to H$ onto a separable
metrizable paratopological group $H$ such that $W=\pi^{-1}\pi(W)$. Hence $W^{-1}=\pi^{-1}\pi(W^{-1})$.
Combining the two equalities we see that
\begin{equation*}
W\cap W^{-1}=\pi^{-1}\pi(W\cap W^{-1}).
\end{equation*}

It is clear that $|H|\leq \cont$. Choose a subset $A$ of $G$ with $|A|\leq \cont$ such that
$\pi(A)=H$. Applying $(2)$ and taking into account that $e\in W\cap W^{-1}\neq\emptyset$ we deduce
that $A\cdot (W\cap W^{-1})=G=(W\cap W^{-1})\cdot A$. The latter equalities together with $e\in W\cap W^{-1}\subset V\cap V^{-1}\subset U$ imply that $A\cdot U=G=U\cdot A$. Hence the group $G^\ast$
satisfies $ib(G^\ast)\leq \cont$.

We now apply \cite[Theorem~5.4.10]{AT} to conclude that $c(G^\ast)\leq 2^{ib(G^\ast)}\leq 2^\cont$.
Since $G$ is a continuous image of $G^\ast$, we also have $c(G)\leq c(G^\ast)\leq 2^\cont$.
Finally, the cardinal number $2^\cont$ is Ulam non-measurable by \cite[Theorem~6.2.2]{AT}.
So the cardinality of every discrete family of open sets in $G$ is Ulam non-measurable and the
equality $\mu{G}=\upsilon{G}$ hods by \cite[Lemma~8.3.1]{AT}.
\end{proof}

\begin{corollary}\label{C3.8}
Let $G$ be a regular simply $sm$-factorizable paratopological group. Then the equality
$\mu{G}=\upsilon{G}$ hods. Furthermore, the space $\mu{G}$ admits a natural structure
of paratopological group containing $G$ as a dense subgroup and $\mu{G}$ is also simply
$sm$-factorizable.
\end{corollary}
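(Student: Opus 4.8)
The plan is to derive this corollary directly from the two results immediately preceding it, Theorem~\ref{Th} and Lemma~\ref{Le:ast}, with essentially no further argument. First I would invoke Lemma~\ref{Le:ast}, which already records that every regular simply $sm$-factorizable paratopological group $G$ satisfies $\upsilon{G}=\mu{G}$; this yields the first assertion of the corollary.

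Next I would apply Theorem~\ref{Th}: the realcompactification $\upsilon{G}$ of the space $G$ carries a natural paratopological group topology for which $G$ is a dense subgroup, and moreover $\upsilon{G}$ is itself simply $sm$-factorizable. The last step is simply to substitute $\mu{G}$ for $\upsilon{G}$: since these two completions coincide as topological spaces, the paratopological group structure produced by Theorem~\ref{Th} lives on $\mu{G}$, it contains $G$ as a dense subgroup, and it is simply $sm$-factorizable. This gives all three conclusions and completes the proof.

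There is no real obstacle here — the substantive work was done earlier. The construction of the group topology on $\upsilon{G}$ (realizing it as the $G_\delta$-closure of $G$ inside a product $\prod_{\alpha\in A}H_\alpha$ of regular strongly submetrizable paratopological groups, which is realcompact and has countable Hausdorff number by Lemmas~\ref{LL1} and~\ref{LL}) was carried out in Theorem~\ref{Th}, and the cellularity estimate together with the Ulam-non-measurability argument forcing $\mu{G}=\upsilon{G}$ was carried out in Lemma~\ref{Le:ast}. The only point requiring a moment's care is to observe that the ``natural structure'' on $\upsilon{G}$ furnished by Theorem~\ref{Th} is literally the object denoted $\mu{G}$ here, which is immediate once the underlying spaces are known to be equal.
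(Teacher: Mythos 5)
Your proposal is correct and follows essentially the same route as the paper: the equality $\mu{G}=\upsilon{G}$ is taken from Lemma~\ref{Le:ast} (the paper merely re-runs the Ulam non-measurability step via the cellularity bound from that lemma), and the remaining conclusions are read off from Theorem~\ref{Th}. Nothing further is needed.
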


\begin{proof}
Lemma~\ref{Le:ast} implies that the cardinality of every discrete family of open sets in $G$ is
at most $2^\cont$ and, hence, is Ulam non-measurable. So the equality $\mu{G}=\upsilon{G}$
hods by \cite[Lemma~8.3.1]{AT}. Therefore, both conclusions of the corollary follow directly from Theorem~\ref{Th}.
\end{proof}

\begin{corollary}
Let $G$ be a regular totally $\omega$-narrow and weakly Lindel\"{o}f paratopological group. Then
the equality $\mu{G}=\upsilon{G}$ hods. Furthermore, the Dieudonn\'{e} completion $\mu{G}$ of
the space $G$ admits a natural structure of paratopological group containing $G$ as a dense
subgroup and $\mu{G}$ is simply $sm$-factorizable.
\end{corollary}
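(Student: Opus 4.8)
The plan is to derive this corollary from Corollary~\ref{C3.8} by showing that every regular totally $\omega$-narrow weakly Lindel\"{o}f paratopological group is simply $sm$-factorizable, which is precisely Corollary~\ref{C2.8}. So the proof reduces to verifying the hypotheses of Corollary~\ref{C3.8} hold. First I would invoke Corollary~\ref{C2.8} to conclude that $G$, being regular, totally $\omega$-narrow, and weakly Lindel\"{o}f, is simply $sm$-factorizable. Then, since $G$ is a regular simply $sm$-factorizable paratopological group, I would apply Corollary~\ref{C3.8} directly: it yields the equality $\mu{G}=\upsilon{G}$, the fact that $\mu{G}$ carries a natural paratopological group structure containing $G$ as a dense subgroup, and that $\mu{G}$ is again simply $sm$-factorizable.

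There is essentially nothing to prove beyond this chaining of earlier results, so I do not expect a genuine obstacle. The only point to be careful about is confirming that the separation hypothesis of Corollary~\ref{C2.8} matches what we have: Corollary~\ref{C2.8} is stated for totally $\omega$-narrow weakly Lindel\"{o}f paratopological groups without an explicit regularity assumption, and its proof passes to $(T_2(G))_{sr}$ anyway, so regularity of $G$ is more than enough. Likewise Corollary~\ref{C3.8} requires exactly ``regular simply $sm$-factorizable paratopological group,'' which we now have in hand. One could alternatively note that $\mu{G}$ being simply $sm$-factorizable also follows because $\mu{G}$ inherits total $\omega$-narrowness and weak Lindel\"{o}fness from $G$ (as a dense subgroup) together with Corollary~\ref{C2.8}, but the cleaner route is simply to cite Corollary~\ref{C3.8}.

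In summary, the proof is: ``By Corollary~\ref{C2.8}, the group $G$ is simply $sm$-factorizable. Since $G$ is regular, all conclusions now follow from Corollary~\ref{C3.8}.'' The main ``work,'' if any, was already done in establishing Corollaries~\ref{C2.8} and~\ref{C3.8}; here we are only assembling them.
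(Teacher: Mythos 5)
Your proposal is correct and coincides with the paper's own proof: the authors likewise invoke Corollary~\ref{C2.8} to get simple $sm$-factorizability of $G$ and then deduce all conclusions from Corollary~\ref{C3.8}. Your remark on the separation hypotheses is also consistent with the paper's treatment.
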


\begin{proof}
The group $G$ is simply $sm$-factorizable, by Corollary~\ref{C2.8}. Hence the required
conclusions follow from Corollary~\ref{C3.8}.
\end{proof}

\begin{problem}
Does Lemma~\ref{Le:ast} remain valid without the assumption of the regularity of $G$?
\end{problem}

One can try to improve one of the conclusions of Lemma~\ref{Le:ast} as follows:

\begin{problem}\label{Prob:SM}
Does every Hausdorff (regular) simply $sm$-factorizable paratopological group $G$ satisfy
$c(G)\leq \cont$?
\end{problem}

It is worth mentioning that the \lq{Hausdorff\rq} and \lq{regular\rq} versions of Problem~\ref{Prob:SM}
are equivalent since every paratopological group $G$ satisfies $c(G)=c(G_{sr})$ (see
\cite[Proposition~2.2]{Tk15}) and the paratopological group $G_{sr}$ is regular provided $G$ is
Hausdorff.

The next result is close to Corollary~\ref{Cor:3.5}. In it, under stronger assumptions, we extend
some properties of topological groups to the wider class of paratopological groups.

\begin{corollary}\label{C3.10}
Let $G$ be a simply $sm$-factorizable paratopological group which is $C$-embedded in a
regular space $X$. If $X^2$ is Lindel\"of, then the group $G$ is $\mathbb{R}$-factorizable.
In addition, if $X$ is a Lindel\"{o}f $\Sigma$-space, then all subgroups of $G$ have
countable cellularity and for every family $\gamma$ of $G_\delta$-sets in $G$, the closure
of $\,\bigcup\gamma$ is a zero-set in $G$.
\end{corollary}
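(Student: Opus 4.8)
The plan is to lift the statement from $G$ to the space $X$, which we can arrange to be $\upsilon G=\mu G$ and to carry a paratopological group structure, prove everything there, and then descend back to $G$ using that $G$ is $C$-embedded (in fact $G_\delta$-dense) in $X$. First I would replace $X$ by $\mathrm{cl}_X G$: this subspace is again regular (regularity is hereditary), still contains $G$ as a dense $C$-embedded subspace, and its square is closed in $X^2$, hence Lindel\"{o}f, so $\mathrm{cl}_X G$ is itself a regular Lindel\"{o}f space and therefore Tychonoff and realcompact. Since a realcompact space in which $G$ is dense and $C$-embedded must be $\upsilon G$, we may and do assume $X=\upsilon G$. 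As a subspace of the regular space $X$, the group $G$ is regular, so Corollary~\ref{C3.8} applies and gives $\mu G=\upsilon G$, a paratopological group topology on $X=\upsilon G=\mu G$ making $G$ a dense subgroup, and the simple $sm$-factorizability of $X$. Being $\upsilon G$, the space $X$ is Tychonoff; in particular $X$ is a $T_1$ paratopological group with $X^2$ Lindel\"{o}f.

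For the first assertion I would first note that, since $X$ is $T_1$, continuity of the multiplication makes the graph $\Gamma=\{(x,x^{-1}):x\in X\}$ of the inversion closed in $X^2$ (given $(a,b)$ with $ab\neq e$, choose open $U\ni a$ and $V\ni b$ with $UV\subseteq X\setminus\{e\}$, so $(U\times V)\cap\Gamma=\emptyset$), and that $x\mapsto(x,x^{-1})$ is a homeomorphism of $X^\ast$ onto $\Gamma$; hence $X^\ast$ is homeomorphic to a closed subspace of $X^2$, so $X^\ast$ is a Lindel\"{o}f topological group, therefore $\omega$-narrow, and $X$ is totally $\omega$-narrow. Thus $X$ is a regular, totally $\omega$-narrow, Lindel\"{o}f paratopological group, and such groups are $\mathbb{R}$-factorizable by the known theory of $\mathbb{R}$-factorizable paratopological groups (the paratopological counterpart of the fact that Lindel\"{o}f topological groups are $\mathbb{R}$-factorizable). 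To descend to $G$: for a continuous $f\colon G\to\mathbb{R}$, extend it to $\tilde{f}\colon X\to\mathbb{R}$, factor $\tilde{f}=\tilde{g}\circ\pi$ with $\pi\colon X\to H$ a continuous homomorphism onto a separable metrizable paratopological group $H$ and $\tilde{g}\colon H\to\mathbb{R}$ continuous, and restrict, obtaining $f=(\tilde{g}\res_{\pi(G)})\circ(\pi\res_G)$ with $\pi\res_G$ a continuous homomorphism of $G$ onto the separable metrizable paratopological group $\pi(G)$; hence $G$ is $\mathbb{R}$-factorizable. This is the step I expect to be the main obstacle: $X$ is already known to be simply $sm$-factorizable by Corollary~\ref{C3.8}, and the real work lies in upgrading this to full $sm$-factorizability, i.e.\ in arranging that the image of a cozero set under the factoring homomorphism be open, which is exactly where the structure theory of $\mathbb{R}$-factorizable paratopological groups is needed.

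Now suppose in addition that $X$ is a Lindel\"{o}f $\Sigma$-space. Then $X^2$ is Lindel\"{o}f $\Sigma$ (hence Lindel\"{o}f, so everything above applies), and, by the closed embedding of $X^\ast$ into $X^2$ from the previous paragraph, $X^\ast$ is a Lindel\"{o}f $\Sigma$ topological group. Given any subgroup $H$ of $G$, and hence of $X$, the subgroup $H^\ast$ of $X^\ast$ has countable cellularity because subgroups of Lindel\"{o}f $\Sigma$ topological groups do; since the identity mapping $H^\ast\to H$ is a continuous bijection, $c(H)\le c(H^\ast)\le\omega$, which gives the first part of the additional claim.

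For the zero-set statement I would use that $G$ is $G_\delta$-dense in $\upsilon G=X$. Given a family $\gamma$ of $G_\delta$-sets of $G$, write each $P\in\gamma$ as $P=Q_P\cap G$ with $Q_P$ a $G_\delta$-set of $X$. Since $G$ is $G_\delta$-dense, $Q_P\cap G$ is dense in $Q_P$ (for $q\in Q_P$ and open $W\ni q$, the nonempty $G_\delta$-set $W\cap Q_P$ meets $G$), whence $\mathrm{cl}_X(\bigcup_{P\in\gamma}Q_P)=\mathrm{cl}_X(\bigcup\gamma)$. By the known fact that in a Lindel\"{o}f $\Sigma$ paratopological group the closure of the union of any family of $G_\delta$-sets is a zero-set, this common closure equals $\tilde{h}^{-1}(0)$ for some continuous $\tilde{h}\colon X\to\mathbb{R}$; restricting, $\mathrm{cl}_G(\bigcup\gamma)=\mathrm{cl}_X(\bigcup\gamma)\cap G=(\tilde{h}\res_G)^{-1}(0)$ is a zero-set of $G$. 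The remaining external inputs\,---\,that subgroups of Lindel\"{o}f $\Sigma$ topological groups have countable cellularity and that closures of unions of $G_\delta$-sets in Lindel\"{o}f $\Sigma$ paratopological groups are zero-sets\,---\,are standard facts about Lindel\"{o}f $\Sigma$ (para)topological groups.
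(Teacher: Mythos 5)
Your argument is correct and follows essentially the same route as the paper's proof: reduce to $X=\upsilon{G}=\mu{G}$ via Corollary~\ref{C3.8}, use the closed copy of $X^\ast$ in $X^2$ to get $X^\ast$ Lindel\"of (resp.\ Lindel\"of $\Sigma$), invoke the known factorization theorem for such paratopological groups (the paper cites \cite[Theorem~3.6]{ST3}) and the known results on subgroups and closures of unions of $G_\delta$-sets in Lindel\"of $\Sigma$ groups, then descend to $G$ via $C$-embedding and $G_\delta$-density. The only differences are cosmetic: you re-prove inline the closed embedding of $X^\ast$ into $X^2$ and the heredity of $\mathbb{R}$-factorizability to $C$-embedded subgroups, and you lift $G_\delta$-sets of $G$ to $G_\delta$-sets of $X$ where the paper instead passes to zero-sets and their closures.
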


\begin{proof}
We can assume without loss of generality that $G$ is dense in $X$ because the class of
Lindel\"{o}f $\Sigma$-spaces is hereditary w.r.t.~taking closed subspaces. Then $\upsilon{G}=X$.
By Corollary~\ref{C3.8}, $X = \upsilon{G} = \mu{G}$ and $X$ is homeomorphic to a
paratopological group containing $G$ as a dense paratopological subgroup. Also, the
topological group $X^\ast$ associated to $X$ is topologically homeomorphic to a closed
subspace of the space $X^2$ \cite[Lemma~2.2]{AS} and, hence, $X^\ast$ is Lindel\"of.
Applying \cite[Theorem~3.6]{ST3} we see that $X$ is $\mathbb{R}$-factorizable. So $G$ is
$\mathbb{R}$-factorizable as a $C$-embedded subgroup of the $\mathbb{R}$-factorizable
paratopological group $X$ (see \cite[Theorem~3.2]{XY1}).

Assume that $X$ is a Lindel\"of $\Sigma$-space. Then $X^2$ is also a Lindel\"of $\Sigma$-space,
so $G$ is $\mathbb{R}$-factorizable. Let $G^\ast$ be the topological group associated to $G$. It
is clear that the identity embedding of $G$ to $X$ is topological isomorphism of $G^\ast$ onto
a subgroup of the topological group $X^\ast$. Since $X^\ast$ is homeomorphic to a closed
subspace of $X^2$, we deduce that $X^\ast$ is a Lindel\"of $\Sigma$-space. Every subgroup
$H$ of $X^\ast$ has countable cellularity by \cite[Corollary~5.3.21]{AT}. If $K$ is a subgroup of
$G$, then $H=j^{-1}(K)$ is a subgroup of both $G^\ast$ and $X^\ast$, where $j$ is the identity
mapping of $G^\ast$ onto $G$. It follows from the continuity of $j$ that $c(K)\leq c(H)\leq\omega$.

Finally, let $\gamma$ be a family of $G_\delta$-sets in $G$. Since $G$ is a Tychonoff
space, each element of $\gamma$ is the union of a family of zero-sets. Hence we can
assume that $\gamma$ consists of zero-sets in $G$. By our assumptions, $G$ is a dense
$C$-embedded subspace of $X$, so the closure in $X$ of every zero-set in $G$ is a
zero-set in $X$. Let $\gamma^\ast=\{\mathrm{cl}_X P: P\in\gamma\}$. Then $\gamma^\ast$
is a family of zero-sets in $X$ and \cite[Theorem~4.2]{ST2} implies that the closure  of
$\bigcup\gamma^\ast$ in $X$ is a zero-set. Since $\bigcup\gamma$ is dense in
$\bigcup\gamma^\ast$, we conclude that $\mathrm{cl}_G\bigcup\gamma$ is a
zero-set in $G$.
\end{proof}

\begin{remark}
In Corollary~\ref{C3.10}, the Lindel\"{o}fness of $X^2$ cannot be weakened to the Lindel\"{o}fness
of $X$. Indeed, the Sorgenfrey line $\mathbb{S}$ is a simply $sm$-factorizable paratopological
group by Theorem~\ref{th1}. Also, $\mathbb{S}$ is Lindel\"of but not $\mathbb{R}$-factorizable \cite[Remark~3.22]{XST}.
\end{remark}

\begin{corollary}[See Theorem~1 of \cite{ST1}]
Let $G$ be a regular $\mathbb{R}$-factorizable paratopological group. Then the Dieudonn\'{e}
completion of the space $G$, $\mu{G}$, admits a natural structure of paratopological group
containing $G$ as a dense subgroup and $\mu{G}$ is $\mathbb{R}$-factorizable.
\end{corollary}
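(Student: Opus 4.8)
The plan is to deduce the structural part of the statement from what has already been proved for simply $sm$-factorizable paratopological groups, and then to add one short density argument to upgrade \emph{simple $sm$-factorizability} to \emph{$\mathbb{R}$-factorizability} for the completion. First I would note that a regular $\mathbb{R}$-factorizable paratopological group $G$ is simply $sm$-factorizable: by the equivalence of $\mathbb{R}$-factorizability and $sm$-factorizability recalled in the Introduction, $G$ is $sm$-factorizable, hence simply $sm$-factorizable. Consequently Corollary~\ref{C3.8} applies to $G$ and yields at once that $\mu{G}=\upsilon{G}$ and that $\mu{G}$ carries a natural paratopological group structure containing $G$ as a dense subgroup. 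It therefore remains only to prove that $\mu{G}$ is $\mathbb{R}$-factorizable.

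For this I would reproduce the construction in the proof of Theorem~\ref{Th}, exploiting the stronger hypothesis to pick better factor groups. Let $\{f_\alpha:\alpha\in A\}$ be the family of all continuous real-valued functions on $G$. Since $G$ is $\mathbb{R}$-factorizable, for each $\alpha\in A$ there are a continuous homomorphism $\pi_\alpha$ of $G$ onto a \emph{separable metrizable} paratopological group $H_\alpha$ and a continuous $g_\alpha\colon H_\alpha\to\mathbb{R}$ with $f_\alpha=g_\alpha\circ\pi_\alpha$. Every separable metrizable paratopological group is regular and strongly submetrizable, so every step in the proof of Theorem~\ref{Th} goes through verbatim: $G$ is a Tychonoff space (being a regular paratopological group), the diagonal map $\pi=\Delta_{\alpha\in A}\pi_\alpha$ is a topological isomorphism of $G$ onto the subgroup $\pi(G)$ of $\Pi=\prod_{\alpha\in A}H_\alpha$, the $G_\delta$-closure $H$ of $\pi(G)$ in $\Pi$ is a subgroup of $\Pi$ containing $\pi(G)$ as a dense subgroup (Lemmas~\ref{LL1} and~\ref{LL}), $H$ is realcompact, $G$ is $C$-embedded in $H$, and hence $H=\upsilon{G}$. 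Combining this with $\mu{G}=\upsilon{G}$ (Corollary~\ref{C3.8}), I may identify $\mu{G}$ with the subgroup $H$ of $\Pi$, and under this identification $f_\alpha=g_\alpha\circ p_\alpha\res_G$, where $p_\alpha\colon\Pi\to H_\alpha$ is the projection.

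Finally I would verify that $H$ is $\mathbb{R}$-factorizable. Given a continuous $g\colon H\to\mathbb{R}$, its restriction $g\res_G$ equals one of the $f_\alpha$, so $g\res_G=g_\alpha\circ p_\alpha\res_G$; since $G$ is dense in $H$ and $\mathbb{R}$ is Hausdorff, the continuous functions $g$ and $g_\alpha\circ p_\alpha\res_H$ coincide on all of $H$, i.e. $g=g_\alpha\circ p_\alpha\res_H$. Now $p_\alpha\res_H$ is a continuous homomorphism of $H$ onto the subgroup $p_\alpha(H)$ of $H_\alpha$, which is again a separable metrizable paratopological group, and $g_\alpha$ restricts to a continuous function on $p_\alpha(H)$. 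Thus $g$ factors through a continuous homomorphism of $H$ onto a separable metrizable paratopological group, so $\mu{G}=H$ is $\mathbb{R}$-factorizable, as required.

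I do not expect a genuine obstacle here: the construction and all the realcompactness and $C$-embedding bookkeeping are precisely those of Theorem~\ref{Th}. The only point needing a little attention is the observation that that proof, written for regular strongly submetrizable factors, may be run with \emph{separable metrizable} factors once $G$ is known to be $\mathbb{R}$-factorizable, after which the $\mathbb{R}$-factorization of an arbitrary continuous function on $\mu{G}$ drops out of density. It is also worth remarking that this argument simultaneously recovers \cite[Theorem~1]{ST1} and shows, via Corollary~\ref{C3.8}, that $\mu{G}=\upsilon{G}$ holds for every regular $\mathbb{R}$-factorizable paratopological group $G$.
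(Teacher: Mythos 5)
Your argument is correct, and its first half is exactly the paper's: reduce to simple $sm$-factorizability (via $\mathbb{R}$-factorizable $\Leftrightarrow$ $sm$-factorizable $\Rightarrow$ simply $sm$-factorizable) and invoke Corollary~\ref{C3.8} to obtain $\mu{G}=\upsilon{G}$ together with the paratopological group structure on $\mu{G}$ extending that of $G$. Where you differ is the second half: the paper disposes of the $\mathbb{R}$-factorizability of $\mu{G}$ in one sentence, referring the reader to the argument of \cite{ST1}, whereas you make the corollary self-contained by re-running the construction of Theorem~\ref{Th} with the factors $H_\alpha$ chosen separable metrizable (available precisely because $G$ is $\mathbb{R}$-factorizable rather than merely simply $sm$-factorizable), identifying the $G_\delta$-closure $H$ of $G$ in $\prod_{\alpha}H_\alpha$ with $\upsilon{G}=\mu{G}$, and then factorizing an arbitrary continuous $g\colon H\to\mathbb{R}$ as $g_\alpha\circ p_\alpha$ restricted to $H$ by the density of $G$ in $H$ and the Hausdorffness of $\mathbb{R}$. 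The small supporting facts you rely on are all sound: separable metrizable paratopological groups are regular, strongly submetrizable, Lindel\"of and hence realcompact, so every step of Theorem~\ref{Th} goes through (indeed more easily); a subgroup $p_\alpha(H)$ of a separable metrizable group is separable metrizable; and two continuous real-valued functions agreeing on a dense subset agree everywhere. So your proof buys a complete in-paper derivation that simultaneously recovers \cite[Theorem~1]{ST1}, at the cost of repeating the bookkeeping of Theorem~\ref{Th}, while the paper's version is shorter only because it outsources exactly that step to the earlier reference.
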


\begin{proof}
Every regular paratopological group is a Tychonoff space \cite[Corollary~5]{BR}. Since
$G$ is $\mathbb{R}$-factorizable (hence, simply $sm$-factorizable), we can apply
Corollary~\ref{C3.8} to conclude that both multiplication and inversion on $G$ admit
continuous extensions to $\mu{G}$ making the latter space into a paratopological group.
The $\mathbb{R}$-factorizability of the group $\mu{G}$ can be deduced as in \cite{ST1}.
\end{proof}

Our proof of Theorem~\ref{P3.11} below requires the next simple fact:

\begin{lemma}\label{Le:NS}
Let $\mathcal{N}$ be a family of subgroups of a paratopological group $G$ such that
every neighborhood of the identity $e$ in $G$ contains an element of $\mathcal{N}$.
Then every neighborhood of $e$ in the topological group $G^\ast$ associated to $G$
also contains an element of $\mathcal{N}$.
\end{lemma}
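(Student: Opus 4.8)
The goal is to show that the family $\mathcal{N}$ of subgroups, which forms a neighborhood basis of $e$ in $G$ (in the weak sense that every neighborhood of $e$ contains a member of $\mathcal{N}$), also does this job for the associated topological group $G^\ast=(G,\tau^\ast)$, where $\tau^\ast=\tau\vee\tau^{-1}$. The plan is to start from an arbitrary $\tau^\ast$-neighborhood $W$ of $e$, unwind the definition of the join topology to produce a $\tau$-neighborhood sitting inside $W$, pull $\mathcal{N}$ in there, and then use the single observation that makes the lemma work: a \emph{subgroup} is symmetric, i.e.\ $N=N^{-1}$, so once $N\in\mathcal{N}$ is contained in a $\tau$-neighborhood of $e$ it is automatically contained in the corresponding $\tau^{-1}$-neighborhood as well, hence in their intersection.

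\medskip

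\noindent\textbf{Proof.} Let $W$ be an arbitrary neighborhood of the identity $e$ in the topological group $G^\ast=(G,\tau^\ast)$. Since $\tau^\ast=\tau\vee\tau^{-1}$, the family $\{V\cap V^{-1}: V\in\tau,\ e\in V\}$ is a base of neighborhoods of $e$ in $G^\ast$. Hence there is an open neighborhood $V$ of $e$ in $G$ such that $V\cap V^{-1}\subseteq W$. By the hypothesis on $\mathcal{N}$, there exists $N\in\mathcal{N}$ with $N\subseteq V$. Since $N$ is a subgroup of $G$, we have $N=N^{-1}$, and therefore $N=N^{-1}\subseteq V^{-1}$ as well. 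Combining the two inclusions we obtain
$$
N=N\cap N^{-1}\subseteq V\cap V^{-1}\subseteq W.
$$
Thus $W$ contains the element $N$ of $\mathcal{N}$, which completes the proof. \proved

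\medskip

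\noindent There is essentially no obstacle here: the only point that requires any thought is recognizing that the symmetry $N=N^{-1}$ of a subgroup is exactly what converts an inclusion in a $\tau$-neighborhood into an inclusion in the symmetrized $\tau^\ast$-neighborhood; the rest is the routine description of a base at $e$ for the join topology $\tau\vee\tau^{-1}$. (Alternatively, one may phrase the last step without choosing a basic neighborhood: any $W\in\tau^\ast$ with $e\in W$ contains some $V_1\cap V_2^{-1}$ with $V_1,V_2\in\tau$ and $e\in V_1\cap V_2$; picking $N\in\mathcal{N}$ with $N\subseteq V_1\cap V_2$ and using $N=N^{-1}$ gives $N\subseteq V_1\cap V_2^{-1}\subseteq W$ just as well.)
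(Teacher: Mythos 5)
Your proof is correct and follows essentially the same argument as the paper: choose an open $V\in\tau$ with $V\cap V^{-1}\subseteq W$, pick $N\in\mathcal{N}$ with $N\subseteq V$, and use $N=N^{-1}$ to conclude $N\subseteq V\cap V^{-1}\subseteq W$. Nothing further is needed.
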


\begin{proof}
Take an arbitrary neighborhood $U$ of $e$ in $G^\ast$. It follows from the definition of the
topology of $G^\ast$ that there exists an open neighborhood $V$ of $e$ in $G$ such that
$V\cap V^{-1}\subset U$. By the assumptions of the lemma, there exists $N\in\mathcal{N}$
satisfying $N\subset V$. Since $N$ is a subgroup of $G$, we have that $N=N^{-1}
\subset V^{-1}$. Therefore, $N\subset V\cap V^{-1}\subset U$.
\end{proof}

\begin{theorem}\label{P3.11}
Let $G$ be a regular paratopological group. If the topological group $G^\ast$ associated to $G$
is simply $sm$-factorizable and $\omega$-narrow, then so is $G$.
\end{theorem}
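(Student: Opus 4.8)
The plan is to verify condition (2) of Theorem~\ref{th1} for $G$: given a continuous $f\colon G\to\mathbb{R}$, I must factor it through a continuous homomorphism of $G$ onto a strongly submetrizable paratopological group. Since the identity mapping $G^\ast\to G$ is continuous, $f$ is continuous on $G^\ast$ as well, and $G^\ast$ is an $\omega$-narrow simply $sm$-factorizable topological group, so Theorem~\ref{Th3} supplies an invariant admissible subgroup $N$ of $G^\ast$ such that $f$ is constant on each coset $xN$. I would then pass to the abstract quotient $L:=G/N$, but equipped with the quotient topology inherited from $G$ (\emph{not} from $G^\ast$); the canonical homomorphism $\pi\colon G\to L$ is open and continuous, so $L$ is a paratopological group. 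As $f$ is constant on the cosets of $N$ and $\pi$ is open, $f$ descends to a continuous $g\colon L\to\mathbb{R}$ with $f=g\circ\pi$, so it remains only to show that $L$ (or a suitable regularization of it) is strongly submetrizable.

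For this I would invoke Lemma~\ref{l1}, checking that $L$ is totally $\omega$-narrow, regular, and of countable pseudocharacter. Total $\omega$-narrowness is automatic: $\pi$ stays continuous as a map $G^\ast\to L^\ast$ (inverse images of $\tau$-open sets lie in $\tau\subseteq\tau^\ast$, and of their inverses in $\tau^{-1}\subseteq\tau^\ast$), so $L^\ast$ is a continuous homomorphic image of the $\omega$-narrow group $G^\ast$. Regularity I would obtain by replacing $L$ with its semiregularization $L_{sr}$ when necessary: by Theorem~\ref{Th1} this is regular once $L$ is Hausdorff, it is still totally $\omega$-narrow, and $g$ remains continuous on it because $\mathbb{R}$ is regular \cite[Lemma~3.5]{XST}. (That $N$ is closed in $G$, so $L$ is Hausdorff, and that countable pseudocharacter is preserved under semiregularization, must be checked here, using $Hs(L)\le\omega$.) Thus everything reduces to showing $\psi(L)\le\omega$, i.e.\ that $N$ is the intersection of countably many $N$-saturated \emph{$G$-open} neighborhoods of $e$; this is the main obstacle, and it is precisely where Lemma~\ref{Le:NS} and the simple $sm$-factorizability of $G^\ast$ (rather than mere $\omega$-narrowness) are needed.

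To attack it, note first that $G$ is regular, hence Tychonoff \cite{BR}, so the co-zero neighborhoods of $e$ in $G$ are cofinal among all neighborhoods; each such $W$ is co-zero in $G^\ast$ too, so simple $sm$-factorizability of $G^\ast$ gives a continuous homomorphism $\pi_W$ of $G^\ast$ onto a separable metrizable topological group with $\pi_W^{-1}\pi_W(W)=W$, whence $N_W:=\ker\pi_W$ is an invariant admissible subgroup of $G^\ast$ (Lemma~\ref{L1}) contained in $W$. Thus the invariant admissible subgroups of $G^\ast$ of this form are cofinal at $e$ in $G$, and Lemma~\ref{Le:NS} makes them cofinal at $e$ in $G^\ast$. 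On the other hand, since $N$ is admissible and $G^\ast$ is $\omega$-narrow, $G^\ast/N$ has countable pseudocharacter and is strongly submetrizable, hence simply $sm$-factorizable; carrying out the factorization inside $G^\ast/N$ for a countable family of co-zero neighborhoods of its identity witnessing $\psi(G^\ast/N)\le\omega$ and pulling everything back along $G^\ast\to G^\ast/N$, one writes $N$ as a countable intersection of $N$-saturated $\tau^\ast$-neighborhoods of $e$. Using the cofinality just established, each of these can be replaced by a co-zero $G$-neighborhood of $e$ squeezed between $N$ (inside some $N_W$) and the given $\tau^\ast$-neighborhood, keeping the intersection equal to $N$ and keeping each set $N$-saturated (it contains $N_W\supseteq N$ and $W$ is $N_W$-saturated). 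This gives $\psi(L)\le\omega$; combined with the previous paragraph, Lemma~\ref{l1} shows $L$ is strongly submetrizable, and Theorem~\ref{th1} then yields that $G$ is simply $sm$-factorizable. I expect the delicate point to be exactly the coordination of the two descriptions of $N$ — one from its admissibility in $G^\ast$, the other from the cofinality, via Lemma~\ref{Le:NS}, of the witnessing subgroups in the finer topology $\tau^\ast$ — together with the routine but fiddly bookkeeping around Hausdorffness and semiregularization.
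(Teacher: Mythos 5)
Your overall shape — factor $f$ through $G^\ast$ via Theorem~\ref{Th3}, pass to a quotient of $G$ carrying the $\tau$-quotient topology, and move information between $\tau$ and $\tau^\ast$ via Lemma~\ref{Le:NS} — matches the paper's, but the step you yourself flag as the main obstacle, $\psi(L)\le\omega$, is exactly where the argument breaks, and your proposed repair does not work. You want to convert the presentation $N=\bigcap_n V_n$ by $N$-saturated $\tau^\ast$-neighborhoods into one by $\tau$-open $N$-saturated neighborhoods by ``squeezing'' a co-zero $G$-neighborhood of $e$ between $N$ and $V_n$. But a basic $\tau^\ast$-neighborhood $V\cap V^{-1}$ of $e$ in general contains no $\tau$-neighborhood of $e$ at all (on the Sorgenfrey line $\tau^\ast$ is discrete while $\tau$ is not), so no such squeeze exists; Lemma~\ref{Le:NS} only places a \emph{subgroup} $N_W$ inside $V_n$, and the $\tau$-open co-zero set $W$ attached to $N_W$ bears no inclusion relation to $V_n$, so replacing $V_n$ by $W$ destroys the equality $\bigcap_n V_n=N$. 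Moreover the parenthetical claim $N\subseteq N_W$, which you need to see that $W$ is $N$-saturated, is unjustified: $N_W=\ker\pi_W$ arises from factorizing the co-zero set $W$ and has no relation to the admissible subgroup $N$ produced for $f$. In short, nothing in your argument produces $\tau$-continuous homomorphisms of $G$ onto separable metrizable paratopological groups with small kernels; simple $sm$-factorizability of $G^\ast$ only yields $\tau^\ast$-continuous ones, and it is not established (nor is there any reason to expect) that the $\tau$-quotient by an admissible subgroup of $G^\ast$ has countable pseudocharacter. (Separately, $N$ is only $\tau^\ast$-closed, so even the Hausdorffness of $L$ that you defer is not automatic.)

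The paper closes precisely this gap with a factorization theorem for the paratopological group $G$ itself: since $G$ is regular and totally $\omega$-narrow, it is $\omega$-balanced \cite{ST2} and has countable index of regularity \cite{Sa}, so Theorem~3.6 and Lemma~3.7 of \cite{T} give, inside every $\tau$-neighborhood of $e$, the kernel of a $\tau$-continuous homomorphism onto a separable metrizable paratopological group. This produces a family $\mathcal{N}$ of closed invariant subgroups of $G$ whose $\tau$-quotients are strongly submetrizable, cofinal at $e$ in $\tau$ and closed under countable intersections; Lemma~\ref{Le:NS} transfers the cofinality to $\tau^\ast$, so the $G_\delta$-set $\ker\pi$ in $G^\ast$ coming from the factorization of $f$ through $G^\ast$ contains some $N_i\in\mathcal{N}$, and $f$ then factors through the open quotient homomorphism onto the strongly submetrizable group $G/N_i$, after which Theorem~\ref{th1} finishes. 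If you wish to keep your outline, you must import (or reprove) such an embedding/factorization result for $G$ itself; using only regularity to get co-zero neighborhoods, as you do, is not enough, so the pseudocharacter step — and hence the proof — has a genuine gap.
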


\begin{proof}
Let $\mathcal {N}$ be the family of closed invariant subgroups $N$ of $G$ such that the quotient
paratopological group $G/N$ is strongly submetrizable.\smallskip

{\bf Claim 1.} \emph{The family $\mathcal {N}$ is closed under countable intersections.}\smallskip

Take any countable subfamily $\mathcal {C}=\{N_{k}: k\in\omega\}$ of $\mathcal {N}$. For every
$k\in\omega$, let $\varphi_k\colon G\to G/N_k$ be the quotient homomorphism. Then the diagonal
product $\psi\colon G\to \prod_{k\in \omega} G/N_k$ of the family $\{\varphi_k: k\in \omega\}$ is
a continuous homomorphism. Since every group $G/N_k$ is strongly submetrizable, so is the
countable product $\prod_{k\in \omega} G/N_k$. Further, the subgroup $\psi(G)$ of $\prod_{k\in\omega}
G/N_k$ is also strongly submetrizable. Clearly, the kernel of $\psi$ satisfies $\ker\psi=\bigcap\mathcal {C}$.
It is easy to see that the quotient group $G/\ker\psi$ is  strongly submetrizable and the latter implies
that $\bigcap\mathcal {C}=\ker\psi\in \mathcal {N}$. This proves Claim~1.\smallskip

Let $\mathcal{N}=\{N_i: i\in I\}$ and $\varphi_i\colon G\to G/N_i$ be the quotient
homomorphism, where $i\in I$.\smallskip

{\bf Claim 2.} \emph{The diagonal product $\varphi\colon G\to \varphi(G)\subseteq \Pi=\prod_{i\in I}
G/N_i$ of the family $\{\varphi_i: i\in I\}$ is a topological isomorphism. In particular, every neighborhood
of the identity in $G$ contains an element of $\mathcal{N}$.}\smallskip

Since $G$ is regular and totally $\omega$-narrow, it is $\omega$-balanced \cite[Proposition~3.8]{ST2}
and satisfies $Ir(G)\leq \omega$ \cite[Theorem~2]{Sa}. Therefore, it follows from Theorem~3.6 and Lemma~3.7 of \cite{T}  that for each open neighborhood $U$ of the identity $e$ in $G$, one can find
a continuous homomorphism $p\colon G\to H$ onto a separable metrizable paratopological group $H$
and an open neighborhood $V$ of the identity in $H$ such that $p^{-1}(V)\subseteq U$. Hence the
kernel of $p$ belongs to $\mathcal {N}$ and satisfies $\ker{p}\subset U$. This implies that the
family $\{\varphi_i: i\in I\}$ separates the points and closed sets in $G$ and, therefore, the diagonal
product of this family, say, $\varphi$ is a topological isomorphism of $G$ onto $\varphi(G)$, as claimed.
\smallskip

{\bf Claim 3.} \emph{Every $G_\delta$-set $P$ in $G^\ast$ with $e\in P$ contains some
$N\in\mathcal{N}$.}\smallskip

Take a $G_\delta$-set $P$ in $G^\ast$ containing the identity $e$. Let $\{U_k: k\in\omega\}$
be a family of neighborhoods of the identity in $G^\ast$ such that $P=\bigcap_{k\in\omega} U_k$.
It follows from Lemma~\ref{Le:NS} and Claim~2 that for every $k\in\omega$, there exists
$N_k\in\mathcal{N}$ such that $N_k\subset U_k$. Then by Claim~1, $N=\bigcap_{k\in\omega} N_k$
is an element of $\mathcal{N}$ satisfying $N\subset \bigcap_{k\in\omega} U_k=P$.\smallskip


Take any continuous function $f\colon G\to \mathbb{R}$. Then $f$ remains continuous on $G^\ast$.
Since $G^\ast$ is simply $sm$-factorizable, it follows from item (2) of Theorem~\ref{th1} that we
can find a continuous homomorphism $\pi$ of $G^\ast$ onto a strongly submetrizable topological
group $H$ and a continuous function $g\colon H\to \mathbb{R}$ such that $f=g\circ \pi$. Since $H$
is strongly submetrizable, the identity $e_H$ of $H$ is a $G_\delta$-set in $H$. Therefore, $\ker\pi$
is a $G_\delta$-set in $G^\ast$. Then Claim~3 implies that there is an $N_i\in \mathcal {N}$
such that $N_i\subseteq \ker\pi$. Since $f$ is constant on each coset $x\cdot \ker\pi$ and the
groups $G$ and $G^\ast$ share the same underlying set, we see that $f$ is constant on $xN_i$
for each $x\in G$. Therefore, there exists a function $h\colon G/N_i\to \mathbb{R}$ such that
$f=h\circ \varphi_i$, where $G/N_i$ is endowed with the quotient topology and $\varphi_i$ is the
quotient homomorphism of $G$ onto $G/N_i$. Since $\varphi_i$ is continuous and open, $h$ is
continuous. Clearly, $G/N_i$ is strongly submetrizable, so $G$ is simply $sm$-factorizable by Theorem~\ref{th1}. Finally, $G$ is $\omega$-narrow as a continuous homomorphic image of
$G^\ast$.
\end{proof}

Theorem~\ref{P3.11} makes it natural to ask the following questions:

\begin{problem}
Let $G$ be a regular simply $sm$-factorizable paratopological group. Is  the topological group
$G^\ast$ associated to $G$ simply $sm$-factorizable? What if $G$ is a regular $\mathbb{R}$-factorizable paratopological group?
\end{problem}

\begin{question}
Can the requirement of $\omega$-narrowness of $G^\ast$ be dropped in Theorem~\ref{P3.11}?
Does the $\omega$-narrowness of $G$ suffice?
\end{question}

In Example~\ref{Ex:NN} we answer both parts of the above question in the negative. First we
present an auxiliary lemma in which $\mathbb{Z}$ stands for the discrete additive group
of integers.

\begin{lemma}\label{Le:Aux}
There exists a countable dense subgroup $S$ of the product $\mathbb{Z}^{\cont}$
such that for every $x\in S$, every finite set $C\subset\cont$ and every $k\in\mathbb{Z}$,
one can find $s\in S$ satisfying $s(\alpha)\leq x(\alpha)$ for each $\alpha\in\cont\setminus C$
and $s(\alpha)\leq k$ for each $\alpha\in C$.
\end{lemma}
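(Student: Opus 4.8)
The plan is to realise the index set of the product as the Cantor set and to take $S$ to be the group of locally constant integer-valued functions on it. Since $|\{0,1\}^\omega|=\cont$, we may assume that $\mathbb{Z}^{\cont}=\mathbb{Z}^{K}$, where $K=\{0,1\}^\omega$ is the Cantor set, viewing an element of the product as a function $x\colon K\to\mathbb{Z}$. Let $S$ be the set of all \emph{locally constant} functions $x\colon K\to\mathbb{Z}$, i.e.\ those that are continuous when $\mathbb{Z}$ carries the discrete topology. I claim $S$ works.

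First I would verify the three basic properties. The set $S$ is obviously a subgroup of $\mathbb{Z}^K$, since a pointwise sum or difference of locally constant functions is locally constant. It is countable: by compactness of $K$, every $x\in S$ factors through the projection of $K$ onto its first $n$ coordinates for some $n\in\omega$ (a locally constant function on $K$ is constant on every basic clopen set of some fixed depth), and there are only countably many pairs $(n,h)$ with $h\colon\{0,1\}^n\to\mathbb{Z}$. Finally $S$ is dense: a basic open set of $\mathbb{Z}^K$ may be taken of the form $\{y: y(t_i)=k_i,\ i=1,\dots,m\}$ for distinct $t_1,\dots,t_m\in K$ and $k_1,\dots,k_m\in\mathbb{Z}$; choosing pairwise disjoint clopen sets $V_i\ni t_i$ in the zero-dimensional space $K$ and putting $x=\sum_{i=1}^{m}k_i\,\mathbf{1}_{V_i}\in S$, we hit that basic set.

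For the pushing-down property, fix $x\in S$, a finite set $C=\{t_1,\dots,t_m\}\subseteq K$ and $k\in\mathbb{Z}$. Since $x$ is locally constant, each $t_i$ has a clopen neighbourhood on which $x$ is constant, equal to $x(t_i)$; shrinking and using zero-dimensionality of $K$ we may take these neighbourhoods $W_1,\dots,W_m$ pairwise disjoint, and then $W=\bigcup_{i=1}^{m}W_i$ is clopen. Define $s\colon K\to\mathbb{Z}$ by $s(t)=x(t)$ for $t\in K\setminus W$ and $s(t)=\min\{x(t_i),k\}$ for $t\in W_i$. Then $s$ is constant on each of the clopen pieces $K\setminus W,\ W_1,\dots,W_m$, so $s\in S$; for $t\in W_i$ we have $x(t)=x(t_i)\ge\min\{x(t_i),k\}=s(t)$, hence $s(t)\le x(t)$ for every $t\in K\setminus C$ (trivially so when $t\notin W$); and $s(t_i)=\min\{x(t_i),k\}\le k$, so $s\le k$ on $C$. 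This $s$ is the required element.

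I do not expect a genuine obstacle here: the argument is essentially routine. The only place that really uses the hypotheses is that $x$ is assumed to lie in $S$\,---\,local constancy of $x$ is exactly what lets us find clopen sets $W_i$ on which $x$ is already constant, and hence build a locally constant $s$ dominated by $x$ off $C$ (for an arbitrary $x\in\mathbb{Z}^K$ no such $s$ need exist). The mild technical work is confined to the countability of $S$ (a compactness argument on $K$) and to the repeated use of zero-dimensionality of $K$ to separate finitely many points by disjoint clopen sets.
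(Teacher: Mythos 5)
Your proof is correct, but it takes a genuinely different route from the paper's. The paper modifies the Hewitt--Marczewski--Pondiczery argument: it fixes a separable metrizable topology $\mu$ on the index set $\cont$ with a countable base $\mathcal{A}$, starts from an arbitrary countable dense subgroup $S_0=\hull{R_0}$ of $\mathbb{Z}^{\cont}$, and then recursively enlarges $S_n$ to $S_{n+1}$ by adjoining, for each $x\in S_n$, each finite disjoint $\nu\subset\mathcal{A}$ and each $k\in\mathbb{Z}$, the truncated element equal to $x$ off $\bigcup\nu$ and to $\min\{k,x(\alpha)\}$ on $\bigcup\nu$; the union $S=\bigcup_n S_n$ is then countable and dense, and the pushing-down property is obtained by separating the finitely many points of $C$ by disjoint basic sets, exactly as in your last paragraph. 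You instead topologize the index set as the Cantor set $K$ and take the single explicit group of all locally constant functions $K\to\mathbb{Z}$: this group is closed under the relevant clopen truncations by its very definition, so no recursion or stage-by-stage bookkeeping is needed, and density and the pushing-down property are verified directly. The price is that you use compactness of $K$ (for countability of $S$, via the factorization through $\{0,1\}^n$) and zero-dimensionality (to get disjoint clopen neighbourhoods), whereas the paper's recursion only needs the index topology to be separable metrizable Hausdorff; what you gain is an explicit, canonical $S$ and a shorter, more transparent verification. All steps of your argument check out: the subgroup, countability, and density claims are sound, and the constructed $s$ is locally constant and satisfies $s\leq x$ off $C$ and $s\leq k$ on $C$, which is exactly what the lemma asks.
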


\begin{proof}
Our argument is a modification of the proof of the Hewitt--Marczewski-Pondiczery theorem
as presented in \cite[Theorem~2.3.15]{En}. Let $\mu$ be a separable metrizable topology
on the index set $\cont$. Denote by $\mathcal{A}$ a countable base for $(\cont,\mu)$. Take
a countable dense subset $R_0$ of the space $\Pi=\mathbb{Z}^\cont$ and let $S_0=\hull{R_0}$.
Assume that we have defined countable subgroups $S_0\subset \cdots\subset S_n$ of $\Pi$.
For every $x\in S_n$, every finite disjoint subfamily $\nu$ of $\mathcal{A}$ and every
$k\in\mathbb{Z}$, we define an element $y_{x,\nu,k}\in\Pi$ by the rule
\begin{equation*}
y_{x,\nu,k}(\alpha)=\begin{cases} x(\alpha)&\text{if $\alpha\in\cont\setminus \bigcup\nu$};\\
\min\{k,x(\alpha)\}&\text{if $\alpha\in \bigcup\nu$}.
\end{cases}
\end{equation*}
Let $R_{n+1}=\{y_{x,\nu,k}: x\in S_n,\ \nu\in [\mathcal{A}]^{<\omega},\ k\in\mathbb{Z}\}$.
Clearly $R_{n+1}$ is countable, so $S_{n+1}=S_n+\hull{R_{n+1}}$ is countable as well.

We claim that the subgroup $S=\bigcup_{n\in\omega} S_n$ of $\Pi$ is as required.
Note that $S$ is countable and dense in $\Pi$ since $S_0\subset S$. Take an element
$x\in S$, a finite subset $C=\{\alpha_1,\ldots,\alpha_r\}$ of $\cont$ and an integer $k$.
Then $x\in S_n$ for some $n\in\omega$. Since the space $(\cont,\mu)$ is Hausdorff,
we can find pairwise disjoint elements $U_1,\ldots,U_r$ of $\mathcal{A}$ such that
$\alpha_i\in U_i$ for each $i\leq r$. Let $\nu=\{U_1,\ldots,U_r\}$. Then the point
$s=y_{x,\nu,k}\in R_{n+1}\subset S_{n+1}\subset S$ satisfies $s(\alpha)\leq x(\alpha)$
for each $\alpha\in\cont\setminus C$ and $s(\alpha_i)\leq k$ for each $i\leq r$. This
completes the proof.
\end{proof}

\begin{example}\label{Ex:NN}
\emph{There exists a regular $\omega$-narrow paratopological Abelian group $G$ such that
the topological group $G^\ast$ associated to $G$ is discrete with $|G^*|\leq\cont$ (hence simply $sm$-factorizable by \cite[Proposition 5.15]{AT1}),
but $G$ fails to be simply $sm$-factorizable.}
\end{example}

\begin{proof}
We modify the construction described in \cite[Example~2.9]{T}. In fact, our group $G$
will be a (dense) subgroup of the paratopological group constructed there.

Let $\mathbb{Z}$ be the discrete additive group of the integers and $\Pi=\mathbb{Z}^\cont$
be the product of $\cont=2^\omega$ copies of $\mathbb{Z}$. For every $x\in\Pi$, let
$$
\supp(x)=\{\alpha\in \cont: x(\alpha)\neq 0\}.
$$
Then $\sigma=\{x\in\Pi: |\supp(x)|<\omega\}$ is a subgroup of $\Pi$ which is called the
\emph{$\sigma$-product} of $\cont$ copies of $\mathbb{Z}$. It is clear that $|\sigma|=\cont$.
Let $S$ be a countable dense subgroup of $\Pi$ as in Lemma~\ref{Le:Aux} ($\Pi$ carries
the Tychonoff product topology). Clearly $G=\sigma+S$ is a subgroup of $\Pi$.

For every $A\subset \cont$, we define a subset $U_A$ of $G$ by
$$
U_A = \{x\in G: x(\alpha)=0 \mbox{ for each } \alpha\in A \mbox{ and } x(\alpha)\geq 0
\mbox{ for each } \alpha\in\cont\}.
$$
Note that each $U_A$ is a \emph{subsemigroup} of $G$, i.e.~$U_A+U_A\subset U_A$.
Also, each $U_A$ contains the identity element of $G$.  These properties of the sets
$U_A$ imply that the family
$$
\mathcal{B} = \{x+U_A: x\in G,\ A\subset \cont,\ |A|<\omega\}
$$
is a base for a paratopological group topology $\tau$ on $G$ and the sets $U_A$, with a
finite set $A\subset\cont$, is a local base at the identity of the paratopological group $(G,\tau)$.
It is clear that the topology $\tau$ is (strictly) finer than the topology of $G$ inherited from
the Tychonoff product $\mathbb{Z}^\cont$, so the space $(G,\tau)$ is Hausdorff.\smallskip

\noindent
{\bf Claim~1.} \emph{The group $(G,\tau)$ is $\omega$-narrow.}
\smallskip

Consider a basic open neighborhood $U_A$ of the identity in $G$, where $A$ is finite.
Denote by $\sigma_A$ the set of all $x\in\sigma$ such that $\supp(x)\subset A$. It is
clear that $\sigma_A$ is a countable subgroup of $\sigma$. We claim that $G=S+\sigma_A+U_A$.
Indeed, let $y\in G$ be an arbitrary element. Then $y=x+a$ for some $x\in S$ and $a\in\sigma$. If
$a$ is the identity element of $G$ (equivalently, $\supp(a)=\emptyset$), then $y=x\in S\subset
S+U_A$. Otherwise let $C=\supp(a)\setminus A$ and $k=\min\{y(\alpha): \alpha\in\supp(a)\}$.
According to our choice of $S$, there exists an element $s\in S$ such that $s(\alpha)\leq
x(\alpha)$ for each $\alpha\in\cont\setminus C$ and $s(\alpha)\leq k$ for each $\alpha\in C$.
Since $x$ and $y$ coincide on $\cont\setminus \supp(a)$, our definition of $k$ implies that
$s(\alpha)\leq y(\alpha)$ for each $\alpha\in\cont$. Choose an element $b\in\sigma_A$
such that $y(\alpha)=s(\alpha)+b(\alpha)$ for each $\alpha\in A$. Then $y\in s+b+U_A
\subset S+\sigma_A+U_A$. This proves the equality $G=S+\sigma_A+U_A$. Since the
subset $S+\sigma_A$ of $G$ is countable, we conclude that the group $(G,\tau)$ is
$\omega$-narrow. This proves Claim~1.\smallskip

\noindent
{\bf Claim~2.} \emph{The set $U_A$ is clopen in $(G,\tau)$, for each finite subset $A$
of $\cont$.}\smallskip

Indeed, if $x\in G\setminus U_A$, then either $x(\alpha)\neq 0$  for some $\alpha\in A$
or $x(\beta)<0$ for some $\beta\in\cont$. In the first case, $x+U_B$ is an open neighborhood
of $x$ disjoint from $U_A$, where $B=\{\alpha\}$. In the second case, $x+U_B$ is an open
neighborhood of $x$ disjoint from $U_A$, where $B=\{\beta\}$. Therefore, the complement
$G\setminus U_A$ is open in $(G,\tau)$ and $U_A$ is clopen. This proves Claim~2.

It follows from Claim~2 that the base $\mathcal{B}$ of $(G,\tau)$ consists of clopen sets
and, hence, this space is zero-dimensional. In particular, $(G,\tau)$ is regular.

Let $O=U_\emptyset$.  Then $O\cap (-O)=\{\bar{0}\}$, where $\bar{0}$ is the
identity element of $G$. Hence the topological group $(G,\tau)^\ast$ associated to
$(G,\tau)$ is discrete.\smallskip

\noindent
{\bf Claim~3.} \emph{The group $(G,\tau)$ is not simply $sm$-factorizable.}\smallskip

By Claim~2, the set $O$ is clopen in $(G,\tau)$. Let $f$ be the characteristic function of
$O$, i.e.~$f(x)=1$ if $x\in O$ and $f(x)=0$ otherwise. Then $f$ is continuous. Suppose
for a contradiction that there exists a continuous homomorphism $\varphi\colon (G,\tau)\to H$
to a second-countable paratopological group $H$ such that $O=\varphi^{-1}\varphi(O)$.
Let $\{V_n: n\in\omega\}$ be a local base at the identity of $H$. For every $n\in\omega$,
take a finite subset $A_n$ of $\cont$ such that $\varphi(U_{A_n})\subset V_n$. Then the
set $B=\bigcup_{n\in\omega} A_n$ is countable and $U_B\subset\ker\varphi$. Since
$\ker\varphi$ is a subgroup of $G$, we see that the subgroup $\hull{U_B}$ of $G$
generated by $U_B$ is contained in $\ker\varphi$. An easy verification shows that
$$
\hull{U_B}=\{x\in G: x(\alpha)=0 \mbox{ for each } \alpha\in B\}.
$$
Denote by $p_B$ the natural projection of $G$ to $\mathbb{Z}^B$. Then $\ker p_B=\hull{U_B}
\subset \ker\varphi$, so our choice of $\varphi$ implies that $O=p_B^{-1}p_B(O)$, which is
clearly false. Indeed, take an arbitrary element $x\in G$ such that $x(\alpha)=0$ for each
$\alpha\in B$ and $x(\beta)<0$ for some $\beta\in\cont\setminus B$. Then $x\notin O$,
while $x\in p_B ^{-1}p_B(O)$. This contradiction proves that the group $(G,\tau)$ fails to
be simply $sm$-factorizable.
\end{proof}

Since every Hausdorff $\mathbb{R}$-factorizable topological group is $\omega$-narrow and
simply $sm$-factorizable, the next corollary is immediate from Theorem~\ref{P3.11}.

\begin{corollary}\label{Cor:4.11}
If the topological group $G^\ast$ associated to a regular paratopological group $G$ is
$\mathbb{R}$-factorizable, then $G$ is simply $sm$-factorizable.
\end{corollary}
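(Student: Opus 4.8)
The plan is to obtain Corollary~\ref{Cor:4.11} as an immediate consequence of Theorem~\ref{P3.11}, so the only work is to check that the associated topological group $G^\ast$ meets the two hypotheses of that theorem: $\omega$-narrowness and simple $sm$-factorizability.

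First I would record the preliminary observation that regularity of $G$ forces $G^\ast$ to be a \emph{Hausdorff} topological group. Indeed, a regular space is $T_1$, and the group topology $\tau^\ast=\tau\vee\tau^{-1}$ of $G^\ast$ is finer than $\tau$, so $G^\ast$ is a $T_1$ topological group and hence Tychonoff; in particular it is Hausdorff. This is precisely what makes the classical facts about $\mathbb{R}$-factorizable topological groups applicable to $G^\ast$ without any extra separation assumptions.

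Next I would invoke two known results. On the one hand, every $\mathbb{R}$-factorizable topological group is $\omega$-narrow (see \cite{AT}), so $G^\ast$ is $\omega$-narrow. On the other hand, $\mathbb{R}$-factorizability and $sm$-factorizability coincide for (para)topological groups by \cite[Theorem~5.9]{AT1}, so $G^\ast$ is $sm$-factorizable; and by the implications recorded in the Introduction, $sm$-factorizability entails simple $sm$-factorizability. Hence $G^\ast$ is an $\omega$-narrow, simply $sm$-factorizable topological group, and it is Hausdorff by the previous paragraph.

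Finally, since $G$ is a regular paratopological group whose associated topological group $G^\ast$ is simply $sm$-factorizable and $\omega$-narrow, Theorem~\ref{P3.11} applies verbatim and yields that $G$ itself is simply $sm$-factorizable (and, incidentally, $\omega$-narrow). There is essentially no obstacle in this argument; the only point that deserves a line of justification is the reduction that regularity of $G$ guarantees the Hausdorffness of $G^\ast$, which in turn licenses quoting the standard characterizations of $\mathbb{R}$-factorizability.
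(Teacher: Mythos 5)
Your argument is correct and is essentially the paper's own proof: the corollary is deduced immediately from Theorem~\ref{P3.11} via the standard facts that a Hausdorff $\mathbb{R}$-factorizable topological group is $\omega$-narrow and (being $sm$-factorizable) simply $sm$-factorizable. Your extra remark that regularity of $G$ (hence $T_1$) makes $G^\ast$ a Hausdorff topological group is a reasonable, correct justification that the paper leaves implicit.
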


\begin{problem}\label{Prob:Reg}
Can one weaken the regularity of $G$ in Corollary~\ref{Cor:4.11} to the
Hausdorff separation property?
\end{problem}

\begin{theorem}\label{Th3.14}
Let $G$ be a regular paratopological group such that the topological group $G^\ast$ associated
to $G$ is $\omega$-narrow and simply $sm$-factorizable. Then the realcompactification $\upsilon{G}$
of the space $G$ admits a natural structure of paratopological group containing $G$ as a dense
subgroup and the group $\upsilon{G}$ is simply $sm$-factorizable.
\end{theorem}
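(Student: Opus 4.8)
The plan is to obtain Theorem~\ref{Th3.14} as a short corollary of two results already in place: Theorem~\ref{P3.11}, which transfers simple $sm$-factorizability from the associated topological group $G^\ast$ down to $G$, and Theorem~\ref{Th}, which constructs the completion of a regular simply $sm$-factorizable paratopological group.

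First I would invoke Theorem~\ref{P3.11}. By hypothesis $G$ is a regular paratopological group and its associated topological group $G^\ast$ is $\omega$-narrow and simply $sm$-factorizable, which is precisely the configuration required there. Hence $G$ is itself simply $sm$-factorizable (and, as a by-product, $\omega$-narrow, although the latter will not be needed below). Thus $G$ is now a regular simply $sm$-factorizable paratopological group.

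Second, I would apply Theorem~\ref{Th} verbatim to this $G$. It yields that the realcompactification $\upsilon{G}$ of the underlying space of $G$ carries a natural paratopological group topology for which $G$ sits as a dense subgroup, and that $\upsilon{G}$ is again simply $sm$-factorizable --- which is exactly the conclusion of Theorem~\ref{Th3.14}. If one additionally wants the equality $\mu{G}=\upsilon{G}$ (the form in which this result is announced in the Introduction), it follows from Corollary~\ref{C3.8}, since $G$ has just been shown to be a regular simply $sm$-factorizable paratopological group; in particular the whole construction takes place inside $\mu{G}=\upsilon{G}$.

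Because both ingredients are already established, I do not expect a genuinely new obstacle here: the only point demanding attention is checking that the hypotheses of Theorem~\ref{P3.11} are met exactly (regularity of $G$ together with $\omega$-narrowness and simple $sm$-factorizability of $G^\ast$), after which Theorem~\ref{Th} is applied without modification. All of the substantive work --- the structure theory of $\omega$-balanced paratopological groups used to pass from $G^\ast$ to $G$, and the $G_\delta$-closure argument inside a product of strongly submetrizable groups used to identify $\upsilon{G}$ --- is carried by those two earlier theorems, so the present proof is essentially their composition.
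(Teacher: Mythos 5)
Your proposal is correct and follows essentially the same route as the paper's own proof: apply Theorem~\ref{P3.11} to transfer simple $sm$-factorizability from $G^\ast$ to $G$, then invoke Theorem~\ref{Th} (the paper also routes part of the conclusion through Corollary~\ref{C3.8}, but that corollary is itself a consequence of Theorem~\ref{Th}, so the arguments coincide in substance). No gaps.
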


\begin{proof}
According to Theorem~\ref{P3.11}, $G$ is simply $sm$-factorizable. Hence Corollary~\ref{C3.8}
implies that the space $\upsilon{G}$ admits the structure of paratopological group containing $G$
as a dense subgroup. By Theorem~\ref{Th}, the group $\upsilon{G}$ is simply $sm$-factorizable.
\end{proof}

It is well known that every Hausdorff $\mathbb{R}$-factorizable topological group is $\omega$-narrow
and simply $sm$-factorizable (see \cite[Proposition~8.1.3]{AT} and \cite[Theorem~5.9]{AT1}). Thus
the next corollary follows from Theorem~\ref{Th3.14}.

\begin{corollary}[See Theorem~2 of \cite{ST1}]\label{C3.15}
Let $G$ be a regular paratopological group such that the topological group $G^\ast$ associated to
$G$ is $\mathbb{R}$-factorizable. Then the realcompactification $\upsilon{G}$ of the space $G$
admits a natural structure of paratopological group containing $G$ as a dense subgroup and the
equality $\upsilon{G}=\mu{G}$ holds.
\end{corollary}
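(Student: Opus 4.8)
The plan is to obtain the corollary as an immediate specialisation of Theorem~\ref{Th3.14}, so the only genuine task is to verify that the hypotheses of that theorem are met for $G$. Since $G$ is regular it is in particular Hausdorff, and as the topology of $G^\ast$ is $\tau\vee\tau^{-1}$, which is finer than the topology of $G$, the associated group $G^\ast$ is a Hausdorff topological group. I would then invoke the two cited facts about $\mathbb{R}$-factorizable topological groups: first, that every Hausdorff $\mathbb{R}$-factorizable topological group is $\omega$-narrow (\cite[Proposition~8.1.3]{AT}); second, that $\mathbb{R}$-factorizability coincides with $sm$-factorizability (\cite[Theorem~5.9]{AT1}), which in turn implies simple $sm$-factorizability by the implications recorded in the Introduction. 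Applying both to $G^\ast$ shows that $G^\ast$ is an $\omega$-narrow, simply $sm$-factorizable topological group.

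With this in hand, $G$ is a regular paratopological group whose associated topological group $G^\ast$ is $\omega$-narrow and simply $sm$-factorizable, so Theorem~\ref{Th3.14} applies verbatim and yields a natural paratopological group structure on $\upsilon{G}$ in which $G$ sits as a dense subgroup (and, incidentally, $\upsilon{G}$ is again simply $sm$-factorizable).

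For the remaining equality $\upsilon{G}=\mu{G}$ I would argue as follows. By Theorem~\ref{P3.11}, the regularity of $G$ together with the fact that $G^\ast$ is $\omega$-narrow and simply $sm$-factorizable already forces $G$ itself to be simply $sm$-factorizable. Then Corollary~\ref{C3.8} applies to the regular simply $sm$-factorizable paratopological group $G$ (its proof resting on the cellularity bound $c(G)\le 2^{\cont}$ of Lemma~\ref{Le:ast} and the Ulam non-measurability of $2^{\cont}$, via \cite[Lemma~8.3.1]{AT}) and gives $\mu{G}=\upsilon{G}$. Combining this with the previous paragraph completes the proof. I do not anticipate any real obstacle here: the statement is a routine corollary, the single point deserving a line of care being the passage from "$\mathbb{R}$-factorizable" to "$\omega$-narrow and simply $sm$-factorizable" for the Hausdorff group $G^\ast$, which is precisely what the cited results provide.
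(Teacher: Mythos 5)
Your proposal is correct and matches the paper's argument: the paper likewise notes that a Hausdorff $\mathbb{R}$-factorizable topological group is $\omega$-narrow and simply $sm$-factorizable and then deduces the corollary from Theorem~\ref{Th3.14}, with the equality $\upsilon{G}=\mu{G}$ coming from the same chain (Theorem~\ref{P3.11} plus Corollary~\ref{C3.8}) that you spell out. No gaps; your write-up merely makes explicit what the paper leaves implicit.
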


\begin{remark}
Let $G$ be as in Corollary~\ref{C3.15}. It is shown in \cite[Theorem~2]{ST1} that the
topological groups $(\upsilon{G})^\ast$ and $\upsilon(G^\ast)$ are topologically isomorphic
and $\mathbb{R}$-factorizable. However, we do not know whether the paratopological group
$G$ is $\mathbb{R}$-factorizable (see \cite[Problem~5.1]{ST3}). Notice that $G$ is simply
$sm$-factorizable, by Theorem~\ref{Th3.14}.
\end{remark}

{\bf Acknowledgement:} This paper is dedicated to Professor Lin Shou on the occasion of his 60th anniversary. He is a distinguished teacher and is one of the founders of the Chinese school of Generalized Metric Spaces Theory. His deep mathematical insight and his warm and sincere personality greatly influenced us.



\vskip0.9cm

\end{document}